\titleformat{\section}{\centering\sc}{\thesection .}{.5em}{}[]
\titleformat{\subsection}{\sl}{\thesubsection .}{.5em}{}[]
\newcommand{\stab}{\operatorname{Stab}}
\newcommand{\ttot}{T_{212^*}}
\renewcommand{\phi}{\varphi}
\newcommand{\C}{\mathbb{C}}
\newcommand{\n}{\mathbb{N}}
\newcommand{\f}{\mathbb{F}}
\newcommand{\A}{\mathcal{A}}
\newcommand{\W}{\mathcal{W}}
\renewcommand{\Vert}{\operatorname{Vert}}
\newcommand{\Edge}{\operatorname{Edge}}
\newcommand{\End}{\operatorname{End}}
\newcommand{\Aut}{\operatorname{Aut}}
\newcommand{\Out}{\operatorname{Out}}
\newcommand{\I}{\mathbb{I}}
\newcommand{\AP}{\mathbf{A}}
\newcommand{\CP}{\mathbf{C}}
\renewcommand{\P}{\mathbf{P}}
\newcommand{\chamber}{\mathcal{C}}
\newcommand{\Aff}{\mathcal{A}\!\mathcal{f}\!\mathcal{f}}
\renewcommand{\circle}{\mathcal{Cl}}
\newcommand{\supp}{\operatorname{supp}}
\newcommand{\B}{\mathcal{B}}
\newcommand{\cayley}{\operatorname{C}}
\newcommand{\IH}{\mathcal{I}\! \mathcal{H}}
\newcommand{\image}[2]{\includegraphics[width=#1\textwidth]{images/#2}}
\newcommand{\imageright}[3]{\noindent\begin{minipage}{.33\textwidth}
#2
\end{minipage}~\begin{minipage}{.66\textwidth}
\hfill\image{#1}{#3}
\end{minipage}
}
\newenvironment{verticallycentered}
               {\topskip0pt\vspace*{\fill}}
               {\vspace*{\fill}\clearpage}%
\newcommand{\topnextpage}[1]{ \afterpage{#1} \bigskip}
\tikzstyle{sommet}=[circle,draw, scale=0.5]
\tikzstyle{infosommet}=[scale=0.75]
\tikzstyle{infoarete}=[midway, above, scale=0.85]
\newtheoremstyle{pedro}{}{}{\itshape}{}{\bfseries}{.}{ }{\thmname{#1}\thmnumber{ #2}\thmnote{ (#3)}}
\newtheoremstyle{pedrodef}{}{}{}{}{\bfseries}{.}{ }{\thmname{#1}\thmnumber{ #2}\thmnote{ (#3)}}
\newtheoremstyle{pedroimage}{}{}{\itshape\footnotesize}{}{\itshape\footnotesize}{.}{ }{\thmname{#1}\thmnumber{ #2}\thmnote{ (#3)}}
\theoremstyle{pedroimage}
\newtheorem{figname}{Figure}
\theoremstyle{pedro}
\newtheorem{lem}{Lemma}[section]
\newtheorem{thm}[lem]{Theorem}
\newtheorem{prop}[lem]{Proposition}
\newtheorem{coro}[lem]{Corollary}
\theoremstyle{remark}
\newtheorem{rmk}[lem]{Remark}
\theoremstyle{pedrodef}
\newtheorem{defn}[lem]{Definition}
\newtheorem{ex}[lem]{Example}
\newcommand{\bs}{\backslash}
\title{Homogeneous coherent configurations from spherical buildings and other edge-coloured graphs}
\author{Pierre Guillot}
\date{}
\numberwithin{equation}{section}
\begin{document}

\maketitle

\begin{abstract}
  {\footnotesize We study a class of edge-coloured graphs, including the chamber systems of buildings and other geometries such as affine planes, from which we build homogeneous coherent configurations (also known as association schemes). The condition we require is that the graph be endowed with a certain distance function, taking its values in the adjacency algebra (itself generated by the adjacency operators). When all the edges are of the same colour, the condition is equivalent to the graph being distance-regular, so our result is a generalization of the classical fact that distance-regular graphs give rise to association schemes.

    The Bose-Mesner algebra of the coherent configuration is then isomorphic to the adjacency algebra of the graph. The latter is more easily computed, and comes with a ``small'' set of generators, so we are able to produce examples of Bose-Mesner algebras with particularly simple presentations.

    When a group~$G$ acts ``strongly transitively'', in a certain sense, on a graph, we show that a distance function as above exists canonically; moreover, when the graph is a building, we show that strong transitivity is equivalent to the usual condition of transitivity on pairs of incident chambers and apartments. As an example, we show that the action of the Mathieu group~$M_{24}$ on its usual geometry is not strongly transitive.

    We study affine planes in detail. These are not buildings, yet the machinery developed allows us to state and prove some results which are directly analogous to classical facts in the theory of projective planes (which {\em are} buildings). In particular, we prove a variant of the Ostrom-Wagner theorem on Desarguesian planes which holds in both the projective and the affine case.
  }

\end{abstract}

\image{.9}{petersen_nb.pdf}


\section{Introduction}

We start with a rather long Introduction, but many definitions which are given here will not need to be repeated in the sequel. We start by describing graph-theoretical properties of buildings which are then studied for general graphs in the rest of the paper.

We make the blanket assumption that {\em all graphs considered in the paper are finite}. Some of our results hold more generally for locally finite graphs (which may have infinitely many vertices, but with each vertex having finitely many neighbours), and when we feel that this is important enough, we mention it explicitly. This is notably the case in \S\ref{sec-buildings} for the general results on buildings.

\subsection{Buildings as edge-coloured graphs}

In this paper, we view buildings as particular edge-coloured graphs; for many authors, buildings are simplicial complexes with labelled vertices, and the possibility of encoding them as graphs is a result by Tits \cite{local}, see also the equivalence of categories presented in \cite[Theorem 1.3.1]{charlot}. But we follow Weiss \cite{weiss} in taking this view as the definition, on which we expand very much in \S\ref{sec-buildings}. For now, let us describe some special properties of buildings which can be formulated in graph-theoretical language.

We need some general terminology. For any edge-coloured graph~$\Gamma $, we shall systematically write~$V = \C[\Vert(\Gamma )]$ for the complex vector space freely generated by the vertices. If~$\I$ denotes the set of colours, then we can define for~$i \in \I$ the operator~$T_i \colon V \longrightarrow V$ by
\[ T_i (x) = \sum_{y \sim_i x} \, y  \]
where~$y \sim_i x$ means that~$x$ and~$y$ are connected by an edge of colour~$i$.  We call these the {\em adjacency operators}, and the subalgebra~$\A(\Gamma )$ of~$\End(V)$ which they generate is the {\em adjacency algebra} of~$\Gamma $.

A building~$\Gamma $ has an associated Coxeter group~$W$, with distinguished set of generators~$S = \{ s_i \colon i \in \I \}$; these generators are indexed by the set~$\I$, which serves also as the set of colours for the graph. We have enough notation to state our first theorem, which is a re-discovery of a result by Parkinson \cite{parkinson}. Our proof is a little different, so we have decided to include it, see~\cref{rmk-keep-parkinson-quiet}.

\begin{thm}[\cref{thm-adjacency-building-is-hecke} below]
Let~$\Gamma $ be a locally finite building, and suppose that for each~$i \in \I$ there is an integer~$q_i$ such that each vertex of~$\Gamma $ is incident with~$q_i$ edges of colour~$i$. Then~$\A(\Gamma )$ is isomorphic to the Iwahori-Hecke algebra of~$(W,S)$ with parameters~$q_i$. Thus the generators~$T_i$, for~$i \in \I$, satisfy
\[ (T_i - q_iI)(T_i+I) = 0  \]
as well as
\[ T_i T_j T_i \cdots  = T_j T_i T_j \cdots   \]
with~$m_{ij}$ terms in the products on either side, where~$m_{ij}$ is the order of~$s_is_j \in W$. The algebra~$\A(\Gamma )$ has a basis~$\W= \{ T_w : w \in W \}$ indexed by the elements of~$W$.
\end{thm}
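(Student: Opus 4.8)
The plan is to realise the Iwahori--Hecke algebra directly inside $\End(V)$ by producing an explicit basis indexed by $W$. Recall that a building carries a Weyl-valued distance function $\delta\colon \Vert(\Gamma)\times\Vert(\Gamma)\to W$: writing $c\sim_i d$ for $i$-adjacency, $\delta(c,d)$ is the element of $W$ represented by the (reduced) type of any minimal gallery from $c$ to $d$, and the building axioms guarantee this is well-defined. For each $w\in W$ I would introduce the operator $T_w\in\End(V)$ defined by $T_w(x)=\sum_{\delta(x,y)=w}y$, so that $T_{s_i}=T_i$ and $T_1=I$. Note that $T_w$ is defined intrinsically from $\delta$, and not as a product of generators; this will pay off when handling the braid relations.

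The heart of the argument is a multiplication formula. Fixing a colour $i$ and analysing, for a chamber $z$, the $i$-panel through it together with the gate (projection) of the base chamber onto that panel, I would establish
\[ T_{s_i} T_w = \begin{cases} T_{w s_i} & \text{if } \ell(w s_i) > \ell(w),\\ q_i\, T_{w s_i} + (q_i - 1)\, T_w & \text{if } \ell(w s_i) < \ell(w). \end{cases} \]
Each $i$-panel is a complete graph on $q_i+1$ vertices, and the gate property says that exactly one of them realises the shorter of the two possible distances to the base point while the remaining $q_i$ realise the longer; feeding this into the double sum defining $T_{s_i}T_w$ and bookkeeping the coefficient of each chamber yields the formula. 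This is the step I expect to be the main obstacle, since it is where all the geometry of the building enters.

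Three consequences then follow formally. First, induction on $\ell(w)$ using the increasing case shows $T_w=T_{s_i}T_{w'}$ whenever $w=w's_i$ with $\ell(w)=\ell(w')+1$, so every $T_w$ is a product of the $T_i$ and hence lies in $\A(\Gamma)$; conversely the formula shows that $\operatorname{span}\{T_w:w\in W\}$ contains $T_1=I$ and is stable under left multiplication by each generator $T_i$, hence contains all their products, so it is exactly $\A(\Gamma)$. Second, the operators $\{T_w\}$ are linearly independent because their matrix supports $\{(x,y):\delta(x,y)=w\}$ partition $\Vert(\Gamma)^2$; thus $\W$ is a basis of $\A(\Gamma)$. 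Third, taking $w=s_i$ in the decreasing case gives $T_i^2=q_iI+(q_i-1)T_i$, which is precisely the quadratic relation $(T_i-q_iI)(T_i+I)=0$.

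Finally I would verify the braid relations and assemble the isomorphism. Because $T_w$ depends only on $w$ and not on a chosen reduced word, iterating the increasing case of the formula collapses any product of generators along a reduced expression for $v$ to the single operator $T_{v^{-1}}$ (the inversion reflecting the side on which the recursion acts); since the two braid words $s_is_js_i\cdots$ and $s_js_is_j\cdots$ (with $m_{ij}$ terms) represent the same element of $W$, the corresponding products of the $T_i$ coincide, giving the braid relations. Thus $T_i\mapsto T_i$ extends to a surjection from the abstract Iwahori--Hecke algebra $\mathcal{H}(W,S)$ with parameters $q_i$ onto $\A(\Gamma)$. By the standard basis theorem this surjection sends the basis $\{H_v:v\in W\}$ of $\mathcal{H}(W,S)$ to the family $\{T_{v^{-1}}:v\in W\}=\W$, which we have shown to be a basis of $\A(\Gamma)$; a surjection carrying a basis to a basis is an isomorphism, which completes the proof.
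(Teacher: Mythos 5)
Your argument is correct in substance but follows a genuinely different route from the paper's. The paper never proves the recursive multiplication rule $T_wT_{s_i}=T_{ws_i}$ or $q_i\,T_{ws_i}+(q_i-1)T_w$; instead it establishes, in one stroke for every reduced type $f$, the identity $x\cdot T_f=\sum_{\delta(x,y)=r_f}y$, by combining \cref{lem-action-monomial} with the fact that a gallery of reduced type is minimal and is the unique gallery of its type between its endpoints (\cref{prop-weiss-galleries}). From that single identity the well-definedness of $T_w$, the braid relations, and the linear independence of the $T_w$ all drop out at once, while the quadratic relation comes separately from each panel being a complete graph on $q_i+1$ vertices. Your route through the gate (projection) property of panels is the classical Iwahori-style computation and is closer to Parkinson's treatment; it costs you an extra piece of building geometry, but it buys the full structure constants for multiplication by a generator, which the paper's argument does not produce. (Mind the side conventions: with the paper's right actions the increasing case reads $T_wT_{s_i}=T_{ws_i}$; your left-handed version is consistent only because you define $T_w$ intrinsically and absorb the discrepancy into the inversion $v\mapsto v^{-1}$, but you should fix one convention and check the gate argument against it.)

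The one step you must repair is the appeal to ``the standard basis theorem'' for the abstract algebra $\IH(W,S,(q_i)_{i\in\I})$. For unequal parameters that theorem is only available when $q_i=q_j$ whenever $s_i$ and $s_j$ are conjugate in $W$; that this condition holds automatically for the orders of a regular building is itself a nontrivial fact (due to Parkinson), and the paper deliberately structures its proof so as not to use it --- see \cref{rmk-keep-parkinson-quiet}. Fortunately you do not need the basis theorem at all: it suffices to know that the elements $H_v$ \emph{span} the abstract algebra (this is \cref{prop-iwahori-hecke}, proved by elementary homotopies and the quadratic relation), and since you have shown that their images under the surjection are linearly independent in $\A(\Gamma)$, the surjection carries a spanning family onto a basis and is therefore an isomorphism. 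Replacing ``basis'' by ``spanning set'' in your final paragraph closes the argument exactly as the paper does.
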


\begin{rmk}
This was probably a ``folk'' result for a long time, with many papers using the phrase ``Hecke algebra'' for the adjacency algebra of certain graphs which are not even buildings, for example \cite{ott}. See \cref{rmk-uses-hecke} below for more uses of the phrase ``Hecke algebra'' which may explain further why it has been used for adjacency algebras.
\end{rmk}

Buildings are endowed with a distance function
\[ \delta \colon \Vert(\Gamma ) \times \Vert(\Gamma ) \longrightarrow W  \]
with certain properties which we recall below (indeed, this can be taken as the defining feature of buildings, so it is a deep fact). From the theorem, we see that we can identify~$W$ with the subset~$\W$ of the adjacency algebra, {\em via} $w \mapsto T_w$. Under this identification, we have a map~$\delta \colon \Vert(\Gamma ) \times \Vert(\Gamma ) \longrightarrow \A(\Gamma )$, and it has the following properties :

\begin{enumerate}
\item[(Ar1)] If we fix a vertex~$x$ and consider~$\W = \big\{ \delta (x, y) ~|~ y \in \Vert(\Gamma ) \big\}$, then~$\W$ does not depend on~$x$, and it is a basis for~$\A(\Gamma )$.
\item[(Ar2)] For a given~$T \in \W$, and a vertex~$x$, we have
\[ T(x) = \sum_{\delta (x, y) = T} \, y \, .  \]
\end{enumerate}

We prove this in \cref{coro-buildings-have-architecture}. Of course in (Ar1) the set~$\W$ is the same as above, but we have formulated this property in a way which makes sense for any graph. In fact we make the following definition.

\begin{defn}
Let~$\Gamma $ be an edge-coloured graph. A map~$\delta \colon \Vert(\Gamma ) \times \Vert(\Gamma ) \longrightarrow \A(\Gamma )$ satisfying (Ar1) and (Ar2) will be called an {\em architecture} on~$\Gamma $. We will sometimes call~$\W$ the {\em Coxeter basis} of~$\A(\Gamma )$ (with respect to~$\delta $).

\end{defn}

Motivation for singling out these two axioms will be provided in the next section. For now, we point out that when~$\Gamma $ involves only one colour, then the existence of an architecture is essentially equivalent to~$\Gamma $ being {\em distance-regular} (see \cref{prop-arch-mono-distance-regular} for a precise statement). Also, it may be worth mentioning that when (Ar1) and (Ar2) hold, a number of satisfying other properties follow: thus $\delta (y, x)$ is the transpose of~$\delta (x, y)$, the equality~$\delta (x, y) = I$ is equivalent to~$x=y$, and also~$\delta $ is always~$\Aut(\Gamma )$-invariant, etc. See~\cref{prop-arch-basic-props}.

Another structure on buildings (which can also be taken as the definition) is the existence of {\em apartments}: when~$\Gamma $ is a building with associated Coxeter group~$W$ and set of generators~$S$, the apartments are certain subgraphs of~$\Gamma $ isomorphic to the Cayley graph of~$(W,S)$, satisfying certain axioms. A group~$G$ acting on~$\Gamma $ is traditionally said to act {\em strongly transitively} if it is transitive on pairs~$(x, A)$ where~$x$ is a vertex and~$A$ is an apartment containing~$x$. For our next result, let us focus on finite buildings.

\begin{thm}[\cref{thm-strong-actions-equiv-conditions} below]
  Let~$\Gamma $ be a finite  building, and let~$G$ be a group acting on~$\Gamma $ (by automorphisms of edge-coloured graphs). Then the following statements are equivalent:

  \begin{enumerate}
  \item $G$ acts strongly transitively;
    \item $G$ acts vertex-transitively and $\A(\Gamma ) = \End_G(V)$.
\end{enumerate}

\end{thm}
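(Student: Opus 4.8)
The plan is to translate the algebraic condition $\A(\Gamma ) = \End_G(V)$ into a purely combinatorial statement about the orbits of~$G$ on pairs of vertices, and then to match that statement with strong transitivity using the geometry of apartments. I would begin by recording the inclusions and dimension formulas that are available for free. Since~$G$ acts by automorphisms of edge-coloured graphs, each adjacency operator~$T_i$ commutes with the $G$-action, so $\A(\Gamma ) \subseteq \End_G(V)$ with no hypothesis. On the other hand, for a permutation representation $V = \C[\Vert(\Gamma )]$ of a finite group acting on a finite set, the centraliser algebra $\End_G(V)$ has a basis indexed by the $G$-orbits on $\Vert(\Gamma ) \times \Vert(\Gamma )$, the corresponding operators being $A_{\mathcal{O}}(x) = \sum_{(x,y) \in \mathcal{O}} y$. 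Thus $\dim \End_G(V)$ equals the number of such orbits, while $\dim \A(\Gamma ) = |W|$ by the Iwahori–Hecke description and the Coxeter basis $\W = \{T_w\}$.

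Next I would exploit the architecture~$\delta $ supplied by \cref{coro-buildings-have-architecture}. By (Ar2) the operator~$T_w$ is the sum of~$y$ over all~$y$ with $\delta (x,y) = w$, so~$T_w$ is precisely the orbital operator attached to the level set $\delta ^{-1}(w) = \{(x,y) : \delta (x,y) = w\}$, of which there are exactly~$|W|$ nonempty ones (by the size of the basis~$\W$). Because~$\delta $ is $\Aut(\Gamma )$-invariant (\cref{prop-arch-basic-props}), each level set is a union of $G$-orbits, so the number of orbits is at least~$|W|$, with equality exactly when every $\delta ^{-1}(w)$ is a single orbit. Combined with the inclusion above, this shows that $\A(\Gamma ) = \End_G(V)$ holds if and only if each level set $\delta ^{-1}(w)$ is a single $G$-orbit; and since the diagonal is the level set $\delta ^{-1}(I)$ (because $\delta (x,y)=I$ iff $x=y$), this condition already subsumes vertex-transitivity. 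Hence condition~(2) is equivalent to the statement that \emph{every level set $\delta ^{-1}(w)$ is a single $G$-orbit on $\Vert(\Gamma ) \times \Vert(\Gamma )$}.

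For the implication (1)~$\Rightarrow$~(2) I would argue directly. Given $(x_1, y_1)$ and $(x_2, y_2)$ with $\delta (x_1,y_1) = \delta (x_2,y_2) = w$, choose apartments $A_1 \ni x_1, y_1$ and $A_2 \ni x_2, y_2$, which exist because any two vertices lie in a common apartment. Strong transitivity gives $g \in G$ with $gx_1 = x_2$ and $gA_1 = A_2$; then $gy_1 \in A_2$ and $\delta (x_2, gy_1) = \delta (x_1,y_1) = w$ by invariance, while within the apartment~$A_2$ there is a unique vertex at Weyl-distance~$w$ from~$x_2$. Since~$y_2$ is such a vertex, $gy_1 = y_2$, so $(x_1,y_1)$ and $(x_2,y_2)$ lie in the same orbit, and each level set is a single orbit as required.

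The reverse implication (2)~$\Rightarrow$~(1) is where sphericity is essential, and I expect the identification it requires to be the main obstacle. The point is to set up a $G$-equivariant bijection between pairs $(x, A)$ with $x \in A$ and the single level set $\delta ^{-1}(w_0)$, where~$w_0$ is the longest element of the finite group~$W$. In one direction, to a pair $(x, A)$ I assign the unique vertex $y \in A$ with $\delta (x,y) = w_0$, i.e. the vertex of~$A$ opposite~$x$. In the other direction, to an opposite pair $(x,y)$ with $\delta (x,y) = w_0$, I assign the apartment containing both; here I would invoke the fundamental fact that in a spherical building two opposite chambers lie in a \emph{unique} common apartment, which makes this assignment well-defined and inverse to the first. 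Both maps are $G$-equivariant because~$\delta $ is invariant and because uniqueness of the apartment is transported by automorphisms. Granting condition~(2), the set $\delta ^{-1}(w_0)$ is a single $G$-orbit, so~$G$ is transitive on pairs $(x,A)$, which is exactly strong transitivity. The only genuinely building-theoretic inputs are the existence of a common apartment for any two vertices and the spherical uniqueness statement for opposite chambers; everything else is the dimension bookkeeping of the first two paragraphs.
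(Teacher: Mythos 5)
Your proof is correct and follows essentially the same route as the paper: your intermediate condition (each level set $\delta^{-1}(w)$ is a single $G$-orbit) is precisely condition (3) of \cref{thm-strong-actions-equiv-conditions}, your apartment argument for (1)$\Rightarrow$(2) matches the paper's, and your spherical converse via opposite chambers is the paper's idea repackaged through the (standard, and derivable from \cref{prop-weiss-galleries}) uniqueness of the apartment containing two opposite chambers. The only cosmetic difference is that you establish the equivalence of (2) with the orbit condition by a dimension count on the orbital basis of $\End_G(V)$, where the paper instead compares the operators $\phi_{g^{-1}}$ and $T_w$ directly.
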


Here~$\End_G(V)$, the algebra of operators on~$V$ commuting with the action of~$G$, always satisfies~$\A(\Gamma ) \subset \End_G(V)$.

\begin{rmk} \label{rmk-uses-hecke}
In particular, we see from the two theorems that, if~$\Gamma $ is a building with a strongly transitive $G$-action, then~$\End_G(V)$ can be described as a Iwahori-Hecke algebra; in this form, this is certainly a well-known result, see for example \S6.2 in the classic \cite{gar}. In the literature, the term Hecke algebra is sometimes used for algebras of the form~$\End_G(\C[G/B])$ (as in~\cite{lux}, definition after 1.2.16), and sometimes for a slightly more general type of algebras (as in~\cite{CRI}, discussion around definition 11.22).
\end{rmk}

The theorem motivates the following definition.

\begin{defn} \label{defn-strongly-transitive}
  When the action of~$G$ on the edge-coloured graph~$\Gamma $ is vertex-transitive, and satisfies~$\End_G(V) = \A(\Gamma )$, we say that~$G$ acts {\em strongly transitively} on~$\Gamma $, or that its action is strongly transitive. When~$\Gamma $ admits such an action for some group~$G$, or equivalently when this holds for~$G = \Aut(\Gamma )$, we say that~$\Gamma $ is a strongly transitive graph.
\end{defn}

For spherical buildings, the existence of a strongly transitive actions is equivalent to the {\em Moufang} condition, see~\cite{weiss}. So another good name for a strongly transitive graph would be a Moufang graph.

Our next theorem relates this concept to the previous one, and is not restricted to buildings :

\begin{thm}[\cref{thm-strong-trans-implies-arch} and \cref{rmk-two-archs-on-buildings} below] \label{thm-intro-moufang-implies-arch}
Suppose~$\Gamma $ is strongly transitive. Then there exists a unique architecture on~$\Gamma $. In particular, when~$\Gamma $ is also a building with Coxeter group~$W$, this architecture agrees with the one coming from the distance function with values in~$W$.
\end{thm}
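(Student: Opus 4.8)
The plan is to build the architecture from the $G$-orbit structure and then show it is forced. Fix a group $G$ witnessing strong transitivity (one may take $G=\Aut(\Gamma )$). I would start from the standard description of the commutant $\End_G(V)$: its \emph{orbit operators}, one for each orbit $\Omega$ of $G$ on $\Vert(\Gamma )\times \Vert(\Gamma )$, defined by $A_\Omega(x)=\sum_{(x,y)\in \Omega} y$, form a basis of $\End_G(V)$. Since $\Gamma $ is strongly transitive we have $\End_G(V)=\A(\Gamma )$, so $\{A_\Omega\}$ is a basis of $\A(\Gamma )$. I would then define $\delta (x,y):=A_{\Omega(x,y)}$, where $\Omega(x,y)$ is the $G$-orbit of the pair $(x,y)$. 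Axiom (Ar2) is immediate: for $T=A_\Omega$ the condition $\delta (x,y)=T$ is exactly $(x,y)\in \Omega$, so $T(x)=\sum_{\delta (x,y)=T}y$. For (Ar1), vertex-transitivity guarantees that every orbit $\Omega$ meets $\{x\}\times \Vert(\Gamma )$ for each fixed $x$ (translate any representative so that its first coordinate becomes $x$); hence $\{\delta (x,y):y\in \Vert(\Gamma )\}=\{A_\Omega\}$ independently of $x$, and this is a basis. This establishes existence.

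For uniqueness, let $\delta '$ be an arbitrary architecture with Coxeter basis $\W'$. The key observation is that (Ar2) forces each $T\in \W'$ to be a zero-one operator: the coefficient of $y$ in $T(x)$ is $1$ if $\delta '(x,y)=T$ and $0$ otherwise, so the support $S_T(x)=\{y:\delta '(x,y)=T\}$ records a zero-one matrix. Since $T\in \A(\Gamma )=\End_G(V)$ is $G$-equivariant, its entries are constant on $G$-orbits, whence $T$ is a sum of distinct orbit operators $A_\Omega$. Moreover, for each fixed $x$ the sets $S_T(x)$, as $T$ ranges over $\W'$, partition $\Vert(\Gamma )$ (each vertex $y$ lies in exactly one, namely $T=\delta '(x,y)$, and none is empty by (Ar1)); summing the identities (Ar2) over $T$ therefore gives $\sum_{T\in \W'}T=\sum_\Omega A_\Omega$, both sides being the ``all-ones'' operator $x\mapsto \sum_y y$.

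I would finish with a short linear-algebra argument. Write the change of basis $T=\sum_\Omega M_{T\Omega}A_\Omega$ with $M_{T\Omega}\in \{0,1\}$; the matrix $M$ is invertible, since both $\W'$ and $\{A_\Omega\}$ are bases of the finite-dimensional algebra $\A(\Gamma )$. Linear independence of $\{A_\Omega\}$ applied to $\sum_T T=\sum_\Omega A_\Omega$ shows that every column of $M$ sums to $1$; being zero-one with column sums equal to $1$, each column of $M$ has a single nonzero entry, and invertibility then forces $M$ to be a permutation matrix. Hence $\W'=\{A_\Omega\}$, and comparing (Ar2) for $\delta '$ and for $\delta $ coordinate by coordinate yields $\delta '(x,y)=A_{\Omega(x,y)}=\delta (x,y)$ for all $x,y$. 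Finally, when $\Gamma $ is also a building the Weyl-distance map is an architecture by \cref{coro-buildings-have-architecture}, so by the uniqueness just proved it must coincide with $\delta $, which is the last assertion.

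The main obstacle I anticipate is the uniqueness step, and within it the translation of the axioms into matrix language: one must check with care that (Ar2) really produces zero-one operators whose supports partition the vertex set for each basepoint, after which the permutation-matrix argument is routine. The existence half is comparatively soft, relying only on the standard basis of $\End_G(V)$ together with vertex-transitivity, and the building case reduces immediately to uniqueness.
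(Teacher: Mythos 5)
Your proof is correct and follows essentially the same route as the paper: existence via the standard basis of $\End_G(V)$ indexed by the $G$-orbits on pairs of vertices (the paper phrases this through double cosets and the operators $\phi_w$, which are exactly your orbit operators), and uniqueness by matching the spheres of an arbitrary architecture against those orbits, with the building case reduced to uniqueness via \cref{coro-buildings-have-architecture}. The only cosmetic difference is in the uniqueness step, where the paper counts spheres against $B$-orbits directly (each sphere is $B$-stable and the number of spheres equals $|B\backslash G/B|$), while you package the same information as a zero-one change-of-basis matrix with column sums equal to $1$ that is forced to be a permutation matrix.
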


We also prove that, when a group acts strongly transitively on a graph, it has a certain ``Steinberg'' representation, which is irreducible (Theorem~\ref{thm-steinberg}). Using this, one can prove that certain actions are {\em not} strongly transitive, and we give an example involving the usual geometry for the Mathieu group~$M_{24}$ (see Example~\ref{ex-M24}).

\subsection{Homogeneous coherent configurations}

Architectures on graphs will allow us to construct coherent configurations. Let us recall the definition.

Let~$m \ge 1$ and~$d \ge 0$ be integers. A {\em homogeneous coherent configuration} of degree~$m$ and rank~$d+1$ is a set of non-zero $m\times m$-matrices with entries in~$\{ 0,1 \}$, which can be enumerated as~$A_0, \ldots, A_d$, such that:

\begin{enumerate}
\item[(a)] $A_0  = I$, the identity matrix,
\item[(b)] $A_0 + \cdots + A_d = J$, the all-one matrix,
\item[(c)] for each index~$i$, we have~$A_i^t \in \{ A_0, \ldots, A_d \}$, where~$A_i^t$ is the transpose of~$A_i$,
  \item[(d)] for all pairs~$i,j$ of indices, the product~$A_iA_j$ is in the linear span of~$A_0, \ldots, A_d$.
\end{enumerate}
By (d), we can write
\[ A_i A_j = \sum_{k=0}^d a_{ijk} A_k \, ,   \]
and by (b) we see that~$a_{ijk} \in \n$ (indeed each~$a_{ijk}$ is a coefficient of~$A_iA_j$). These are called the {\em intersection numbers} of the configuration. (In what follows, we shall frequently say {\em configuration} or {\em coherent configuration} for brevity, but homogeneous coherent configurations are always meant; while more general definitions exist, there are not used in this paper.)

We can think of~$A_i$ as the adjacency matrix of an oriented graph on~$X=\{ 1, \ldots, m \}$, and collectively a coherent configuration can be thought of as an assignment of colours, from the set~$\{ 1, \ldots, d \}$, to the edges of the complete, oriented graph on~$X$ (with no loops). This explains why~$m$ is sometimes called the number of vertices of the configuration; $d$ is the number of colours used, and it is also called the number of classes of the configuration.

Consider the important case when each matrix~$A_i$ is symmetric. Then (d) implies that each~$A_iA_j$ is also symmetric, so that~$A_i A_j = A_jA_i$. In this case, the coherent configuration is called a {\em symmetric association scheme}. In this situation, we can think of~$A_i$ as the adjacency matrix of an unoriented graph (to be called simply a graph in this paper) on~$X$, and a symmetric association scheme is a colouring of the edges of the complete graph on~$X$.

The algebra generated (for us, over~$\C$) by the matrices in a coherent configuration will be called its {\em Bose-Mesner} algebra (a terminology which is standard in the case of symmetric association schemes). It has complex dimension~$d+1$.

Coherent configurations were introduced in 1975 by Higman~\cite{higman}. A key objective, among others which we choose not to discuss here, was to generalize the following basic example.  Suppose the finite group~$G$ acts transitively on a finite set~$X$. For each orbit of~$G$ on~$X\times X$, which is a set of ordered pairs of elements of~$X$, consider the oriented graph on~$X$ whose edges are precisely those pairs. Then collectively, these oriented graphs (or rather, their adjacency matrices) form a coherent configuration, to be called henceforth the {\em basic configuration} obtained from the~$G$-action. (Here we use the maximum number of colours on the edges of the complete, oriented graph on~$X$, in such a way that the action of~$G$ preserves the colours.) The Bose-Mesner algebra for this example is~$\End_G(\C[X])$, the algebra of operators on~$\C[X]$ commuting with the action of~$G$ (see \S\ref{subsec-double-cosets}).

\begin{rmk}
Symmetric association schemes, which may be seen as the ``commutative coherent configurations'', actually arose first, in statistics; their first serious application to algebraic problems was made by Delsarte~\cite{delsarte}. Since then, they have been an essential part of coding theory, for example. See~\cite{godsil} for a modern exposition of the many combinatorial problems related to association schemes. We note that association schemes have also been used in relation with spin networks, see~\cite{spin}. For a textbook on coherent configurations, with a point of view which is somewhat different, see~\cite{mongolito}.
\end{rmk}

In this paper we prove:

\begin{thm}[\cref{coro-archi-gives-config} below] \label{thm-intro-arch-implies-config}
When~$\delta  $ is an architecture on the edge-coloured graph~$\Gamma $, the matrices in~$\W$ form a coherent configuration. The Bose-Mesner algebra of the configuration is just the adjacency algebra~$\A(\Gamma )$.
\end{thm}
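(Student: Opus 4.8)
The plan is to verify the four axioms (a)--(d) directly from (Ar1), (Ar2), and the auxiliary properties collected in \cref{prop-arch-basic-props}, after making the passage from operators to matrices explicit. Since $\Gamma$ is finite, $V = \C[\Vert(\Gamma)]$ is finite-dimensional; ordering the vertices as $1, \ldots, m$ identifies $\End(V)$, and in particular $\A(\Gamma)$, with a subalgebra of the $m \times m$ matrices over $\C$. For $T \in \W$, axiom (Ar2) computes the column of $T$ indexed by a vertex $x$: the $(y,x)$-entry of $T$ is the coefficient of $y$ in $T(x)$, which is $1$ if $\delta(x,y) = T$ and $0$ otherwise. Thus every $T \in \W$ is a $\{0,1\}$-matrix, and since $\W$ is a basis of $\A(\Gamma)$ by (Ar1) its members are linearly independent, hence non-zero and pairwise distinct. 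This already presents $\W$ as a set of matrices of the shape required by the definition.

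Next I would enumerate $\W = \{A_0, \ldots, A_d\}$ and check the axioms in turn. For (a): by \cref{prop-arch-basic-props} we have $\delta(x,y) = I$ precisely when $x = y$, so $\delta(x,x) = I$ shows $I \in \W$, and as an operator $I$ is literally the identity matrix; take $A_0 = I$. For (b): fix $x$; every vertex $y$ satisfies $\delta(x,y) = T$ for exactly one $T \in \W$, so the fibres of $y \mapsto \delta(x,y)$ partition $\Vert(\Gamma)$ as $T$ runs over $\W$. Summing the entry formula above over $T$ gives $\sum_{T \in \W}[T]_{y,x} = 1$ for all $x,y$, that is $\sum_{T \in \W} T = J$. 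For (c): if $T = \delta(x,y)$, then \cref{prop-arch-basic-props} gives $T^t = \delta(x,y)^t = \delta(y,x)$, which lies in $\{\delta(y,z) : z \in \Vert(\Gamma)\} = \W$ by the basepoint-independence asserted in (Ar1); hence $T^t \in \W$.

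Axiom (d) is the only multiplicative condition, and it is essentially free: because $\W$ is a basis of the \emph{algebra} $\A(\Gamma)$ by (Ar1), the product $A_iA_j$ already lies in $\A(\Gamma) = \operatorname{span}_\C(\W)$, so it is a linear combination of the $A_k$ (the coefficients being in $\n$ by the argument following the definition, although (d) only asks for membership in the span). This establishes that the matrices of $\W$ form a coherent configuration. Finally, the Bose-Mesner algebra is by definition the algebra generated by these matrices; since their $\C$-span is already closed under multiplication and coincides with $\A(\Gamma)$, the generated algebra is exactly $\A(\Gamma)$, which proves the last sentence.

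There is no deep obstacle here: once the operator/matrix dictionary and the auxiliary facts of \cref{prop-arch-basic-props} are in hand, the argument is formal bookkeeping. The only points requiring care are fixing the column-versus-row convention for $\delta$ in (Ar2) so that the transpose identity used in (c) comes out correctly, and observing that (d) is automatic precisely because an architecture was defined to supply a \emph{basis} of an algebra rather than merely a spanning set of matrices — this is the crux that renders product closure trivial instead of a combinatorial computation of intersection numbers.
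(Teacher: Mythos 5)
Your proposal is correct and follows essentially the same route as the paper: the paper's proof of \cref{coro-archi-gives-config} likewise reads off $A_0=I$ from part (3) of \cref{prop-arch-basic-props}, axiom (c) from part (6), axiom (d) from part (5) (which is itself proved exactly by your observation that $\W$ is a basis of the algebra), the $\{0,1\}$-entries from (Ar2), and axiom (b) from the fact that $\delta(x,y)$ is everywhere defined. Your added care about the row/column convention and the distinctness of the matrices in $\W$ is sound but does not change the argument.
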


If the architecture on~$\Gamma $ comes from a strongly transitive action, then the coherent configuration obtained by the theorem is none other than the ``basic configuration'' just mentioned with~$X= \Vert(\Gamma )$, which is consistent with the Bose-Mesner algebra being~$\End_G(V)$. However, the algebra~$\A(\Gamma )$ is much easier to compute with than~$\End_G(V)$, and comes equipped with a small set of generators. An example will make this clear.

\begin{ex} \label{ex-peter-intro}
Let~$G= S_5$, the symmetric group on~$5$ letters, and~$B= \langle (12), (34) \rangle \cong C_2 \times C_2$. The action of~$G$ on~$G/B$ defines a coherent configuration with~$11$ matrices of size~$30 \times 30$. It is quite possible to write them down (with the help of a computer), but one would like to work with fewer generators, and have a better understanding of the configuration. In \S\ref{sec-compute}, we will see that~$G$ acts strongly transitively on the graph~$\Gamma $ which is displayed on the first page of this paper, and there is a vertex~$x$ with stabilizer~$B$. By~\cref{thm-intro-arch-implies-config}, there is an architecture on~$\Gamma $, and on the picture we have indicated next to each vertex~$y$ the value of~$\delta (y, x)$ (so the vertex~$x$ is the one with the letter~$I$ next to it, standing for the identity matrix). We will see that computing~$\A(\Gamma )$, which is also the Bose-Mesner algebra~$\End_G(V)$ of the configuration, is rather straightforward: it is generated by~$T_1$ and~$T_2$ satisfying~$T_1^2 = I$, $T_2^2  = T_2 + 2I$, and
\[ (T_1 T_2)^3= (T_2T_1)^2(I + T_2) - T_1T_2T_1T_2T_1  \, ,   \]
\[ (T_2T_1)^3 = (I+T_2)(T_1T_2)^2 - T_1T_2T_1T_2T_1   \, .  \]
(The second by transposing the first.) The Coxeter basis is
\[ \W= \big\{
I, \, T_1, \, T_2, \, T_2T_1, \, T_1T_2, \, T_1T_2T_1, \, T_2T_1T_2, \, (T_1T_2)^2, \, (T_2T_1)^2, \, (T_1T_2)^2T_1, \, T_2(T_1T_2)^2 -  (T_1T_2)^2T_1 \big\} \, .  \]
One deduces the intersection numbers easily, and we will explain how this can be used to find with moderate effort all the subgroups which are intermediate between~$B$ and~$G$.
\end{ex}

\topnextpage{
\[ \footnotesize{\left(\begin{array}{rrrrrrrrrrrrrrrrrrrrr}
a & b & b & d & f & f & d & f & f & e & c & e & e & c & e & f & d & f & f & d & f \\
b & a & b & c & e & e & d & f & f & f & d & f & f & d & f & e & c & e & f & d & f \\
b & b & a & d & f & f & c & e & e & f & d & f & f & d & f & f & d & f & e & c & e \\
e & c & e & a & b & b & f & d & f & d & f & f & d & f & f & e & c & e & f & f & d \\
f & d & f & b & a & b & e & c & e & c & e & e & d & f & f & f & d & f & f & f & d \\
f & d & f & b & b & a & f & d & f & d & f & f & c & e & e & f & d & f & e & e & c \\
e & e & c & f & d & f & a & b & b & d & f & f & f & f & d & f & f & d & e & c & e \\
f & f & d & e & c & e & b & a & b & c & e & e & f & f & d & f & f & d & f & d & f \\
f & f & d & f & d & f & b & b & a & d & f & f & e & e & c & e & e & c & f & d & f \\
d & f & f & e & c & e & e & c & e & a & b & b & f & d & f & d & f & f & d & f & f \\
c & e & e & f & d & f & f & d & f & b & a & b & e & c & e & d & f & f & d & f & f \\
d & f & f & f & d & f & f & d & f & b & b & a & f & d & f & c & e & e & c & e & e \\
d & f & f & e & e & c & f & f & d & f & d & f & a & b & b & f & f & d & e & e & c \\
c & e & e & f & f & d & f & f & d & e & c & e & b & a & b & f & f & d & f & f & d \\
d & f & f & f & f & d & e & e & c & f & d & f & b & b & a & e & e & c & f & f & d \\
f & d & f & d & f & f & f & f & d & e & e & c & f & f & d & a & b & b & c & e & e \\
e & c & e & c & e & e & f & f & d & f & f & d & f & f & d & b & a & b & d & f & f \\
f & d & f & d & f & f & e & e & c & f & f & d & e & e & c & b & b & a & d & f & f \\
f & f & d & f & f & d & d & f & f & e & e & c & d & f & f & c & e & e & a & b & b \\
e & e & c & f & f & d & c & e & e & f & f & d & d & f & f & d & f & f & b & a & b \\
f & f & d & e & e & c & d & f & f & f & f & d & c & e & e & d & f & f & b & b & a
\end{array}\right) \, ,}  \]

\begin{figname} \label{fig-big-matrix}
The algebra of matrices of this form, where $a$, $b$, $c$, $d$, $e$, $f \in \C$, is isomorphic to the Iwahori-Hecke algebra of type~$\operatorname{A}_2$ with~$q=2$.
\end{figname}

}

The coherent configurations obtained in this way can surprise us even in relation with buildings and Iwahori-Hecke algebras. As an example, consider the Iwahori-Hecke algebra of type~$\operatorname{A}_2$ with parameter~$q$; in other words, the complex algebra generated by~$T_1$ and~$T_2$ satisfying~$(T_i + 1)(T_i-q) = 0$ for~$i=1, 2$ as well as~$T_1T_2T_1 = T_2T_1T_1$. The existence of the Fano plane (the Desarguesian projective plane of order~$2$, see below)
gives us a building whose adjacency algebra is obtained by specializing~$q$ to~$2$, and the corresponding configuration gives a representation of the latter as the set of all matrices of the form described on \cref{fig-big-matrix}, see next page. This is explained in \S\ref{subsec-projective-planes}.

\subsection{Affines planes}

We come to the class of examples which justifies the machinery presented in this paper. From an affine plane, we will construct an edge-coloured graph~$\Gamma $. The latter is clearly not a building. Yet we can construct an architecture~$\delta $ on~$\Gamma $ with notable ease, and during the construction we see that~$\delta (x,y)$ is a natural ``measure'' of the relative geometric positions of~$x$ and~$y$. The Coxeter basis is introduced even before we can describe the adjacency algebra fully, and the notation and language we use seem very convenient to perform this description.

We need some vocabulary. A {\em line space} is a pair~$(P,L)$ where~$P$ is a set of elements called ``points'', and $L$ is a set of subsets of~$P$ called ``lines'' (of cardinality~$\ge 2$). For example, $P$ and $L$ may be respectively the set of vertices and edges of a simple graph.


A line space $(P,L)$ defines an edge-coloured graph~$\Gamma = \chamber(P,L)$, called its {\em chamber system}, and defined as follows. The vertices are all the pairs~$(p,\ell) \in P \times L$ such that~$p \in \ell$ (these pairs are often called {\em flags}); we place an edge of colour~$1$ between~$(p, \ell)$ and~$(p', \ell)$ when~$p' \ne p$, and we place an edge of colour~$2$ between~$(p, \ell)$ and~$(p, \ell')$ when~$\ell' \ne \ell$. For example, when we perform this construction with the famous Petersen graph, seen as a line space, we obtain the graph drawn on the frontispiece and and already mentioned in~\cref{ex-peter-intro}.

Results such as \cite[Theorem 3.4.6]{buek} show that the chamber system of a line space, or more generally of a geometry, retains a lot of information about it, and indeed the two points of view are almost equivalent.

Consider now {\em linear line spaces}, which are by definition the line spaces with the property that any two points are incident with exactly one common line. A {\em projective plane} is a linear line space with the extra property that any two lines are incident with exactly one common point; an {\em affine plane} is a linear line space with the extra property that, given a line~$\ell$ and a point~$p$ not incident with it, there is exactly one line~$\ell'$ with~$p \in \ell'$ which is parallel to~$\ell$ (that is, there is no point incident with both~$\ell$ and~$\ell'$). Projective and affine spaces are also required to satisfy certain non-triviality conditions which we ignore in this Introduction.

We can use projective planes to shed light on our results about affine planes below. Projective planes correspond exactly, {\em via} the chamber system construction, to the buildings with associated Coxeter group~$S_3$ (= of type $\operatorname{A}_2$), also known as ``generalized triangles''. Each finite projective plane~$\P$ has an order~$q$, such as there are~$q+1$ points incident with each line, and vice-versa. From the results about buildings, we deduce the description of~$\A(\chamber(\P))$, which is the Iwahori-Hecke algebra already mentioned: it is generated by~$T_1$ and~$T_2$ satisfying~$(T_i-qI)(T_i+I) = 0$ and~$T_1T_2T_1 = T_2T_1T_2$. It has dimension~$6$, with Coxeter basis $I, T_1, T_2, T_1T_2, T_2T_1, T_1T_2T_1$.

Here are some of our results on finite affine planes, which are clearly analogous, yet they seem new. Recall that such a plane as an order~$q$, such that each line is incident with~$q$ points, and each point is incident with~$q+1$ lines.

\begin{thm} \label{thm-intro-affine}
  Let~$\AP$ be an affine plane of order~$q$, and let~$\Gamma = \chamber(\AP)$ be its chamber system.
  \begin{enumerate}
\item The incidence algebra of~$\Gamma $, as a complex algebra with two distinguished generators, depends only on~$q$. It is isomorphic to the algebra~$\Aff(q)$ generated by~$T_1$ and~$T_2$ subject to
\[ (T_1 - (q-1)I)(T_1+I) = 0 \, , \quad (T_2 - qI)(T_2 + I) = 0 \, ,  \]
as well as
\[ (T_1T_2)^2 = (q-1)T_2T_1 + (q-1)T_2T_1T_2 - T_1T_2T_1,  \]
and
\[ (T_2T_1)^2 = (q-1)T_1T_2 + (q-1)T_2T_1T_2 - T_1T_2T_1.  \]
It has dimension~$7$, with basis~$I, T_1, T_2, T_1T_2, T_2T_1, T_1T_2T_1, T_2T_1T_2$.

\item $\Gamma $ has a canonical architecture. The associated Coxeter basis (that is, the list of matrices forming a coherent configuration) is $I, T_1, T_2, T_1T_2, T_2T_1, T_1T_2T_1, T_2T_1T_2 - T_1T_2T_1$.


\item The~$\Aff(q)$-module~$V=\C[\Vert(\Gamma )]$ also depends only on~$q$, up to isomorphism.

  \item Let~$\Gamma_0$ be the simple graph obtained from~$\Gamma $ by forgetting the colours of the edges. Then the adjacency eigenvalues of~$\Gamma_0$ are~$-2$ with multiplicity $(q-1)^2(q+1)$, then~$q-2$ with multiplicity~$q$, as well as $2q-1$ with multiplicity~$1$, and finally
\[ \frac{2q - 3 \pm \sqrt{4q+1}}{2} \, ,   \]
each with multiplicity~$q^2 -1$.


\end{enumerate}
\end{thm}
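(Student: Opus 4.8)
The plan is to reduce all four parts to the combinatorics of a single partition of $\Vert(\Gamma)\times\Vert(\Gamma)$ into seven relative-position classes, then read off (1)--(3) from the architecture machinery and (4) from the representation theory of the resulting $7$-dimensional algebra. First I fix a base flag $x=(p_0,\ell_0)$ and classify an arbitrary flag $y=(p,\ell)$ according to the incidences $p=p_0$, $\ell=\ell_0$, $p\in\ell_0$, $p_0\in\ell$, and --- crucially --- whether the lines $\ell$ and $\ell_0$ meet or are parallel. Using the affine-plane axioms one checks that exactly seven classes occur: the diagonal; the two colour classes ($\ell=\ell_0$; $p=p_0$); the two mixed classes ($p\in\ell_0$ with $p_0\notin\ell$; $p_0\in\ell$ with $p\notin\ell_0$); and the two classes with $p\notin\ell_0$, $p_0\notin\ell$, split according to whether $\ell\cap\ell_0\neq\varnothing$ or $\ell\parallel\ell_0$. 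It is the parallel axiom that forces this last, genuinely new, splitting and accounts for the jump from dimension $6$ (the projective case) to dimension $7$.

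For part (1) I would first obtain the quadratic relations by a local count: expanding $T_1^2$ and $T_2^2$ on a flag and using that $\ell_0$ carries $q$ points while $p_0$ lies on $q+1$ lines gives $T_1^2=(q-2)T_1+(q-1)I$ and $T_2^2=(q-1)T_2+qI$, i.e. the two stated quadratics. Let $R_0,\dots,R_6$ be the $\{0,1\}$-adjacency matrices of the seven classes; since every ordered pair lies in exactly one class, $\sum_i R_i=J$ and the $R_i$ have pairwise disjoint support, hence are linearly independent. I would then express each of $T_1T_2$, $T_2T_1$, $T_1T_2T_1$, $T_2T_1T_2$ as an explicit combination of the $R_i$ by counting galleries of the corresponding colour pattern; this simultaneously shows that the $R_i$ lie in $\A(\Gamma)$, that they span it (so $\dim\A(\Gamma)=7$ with the stated monomial basis), and that the last class matrix equals $R_6=T_2T_1T_2-T_1T_2T_1$. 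The two quartic relations then drop out by expanding $(T_1T_2)^2$ and $(T_2T_1)^2$ in the $R_i$-basis and re-expressing in monomials. The main obstacle is the verification underlying all of this: that the seven classes form a coherent configuration, i.e. that the number of $z$ with $(x,z)$ in class $i$ and $(z,y)$ in class $j$ depends only on the class of $(x,y)$. This is the case analysis that consumes the affine-plane axioms, the parallel axiom entering precisely when a product of two ``long'' relations is computed.

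Granting this, part (2) is immediate: setting $\delta(x,y)=R_{\mathrm{class}(x,y)}$ and $\W=\{R_0,\dots,R_6\}$ satisfies (Ar2) by definition of the class matrices and (Ar1) because $\W$ is a basis independent of $x$ and every class is non-empty from every flag; the Coxeter basis is then $I,T_1,T_2,T_1T_2,T_2T_1,T_1T_2T_1,T_2T_1T_2-T_1T_2T_1$, and \cref{thm-intro-arch-implies-config} identifies $\A(\Gamma)$ as the Bose--Mesner algebra. For part (3), every intersection number of this configuration is a count inside $\AP$ that the case analysis shows to depend only on $q$; since $V=\C[\Vert(\Gamma)]$ is the standard module of the configuration, its structure --- in particular the multiplicities below --- is governed by these numbers, hence depends only on $q$. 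I also record that $\A(\Gamma)$ is closed under transpose (the $T_i$ are symmetric and $R_i^t\in\W$), so as a $*$-closed subalgebra of a matrix algebra it is semisimple; the count $7=2^2+1+1+1$ together with solving the defining relations gives $\Aff(q)\cong M_2(\C)\times\C^3$, with three one-dimensional representations $(T_1,T_2)\mapsto(q-1,q),\,(q-1,-1),\,(-1,-1)$ and one two-dimensional representation $T_1=\mathrm{diag}(q-1,-1)$, $T_2=\left(\begin{smallmatrix}0&1\\ q&q-1\end{smallmatrix}\right)$.

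Part (4) is then a computation. The adjacency matrix of $\Gamma_0$ is $T_1+T_2\in\A(\Gamma)$, so its spectrum on $V$ is read off from the decomposition of $V$ into these four irreducibles. In the three one-dimensional representations $T_1+T_2$ acts by $2q-1$, $q-2$ and $-2$; in the two-dimensional one $T_1+T_2=\left(\begin{smallmatrix}q-1&1\\ q&q-2\end{smallmatrix}\right)$ has trace $2q-3$ and determinant $q^2-4q+2$, giving the pair $\tfrac{2q-3\pm\sqrt{4q+1}}{2}$. It remains to find the four multiplicities. Here I would use that each $R_i$ with $i\neq0$ has zero diagonal, so $\tr_V(R_i)=|\Vert(\Gamma)|\,\delta_{i,0}$; writing $\tr_V=\sum_\chi m_\chi\chi$ and evaluating the irreducible characters on enough of the $R_i$ yields a linear system whose unique solution gives multiplicities $1$, $q$, $(q-1)^2(q+1)$ for the one-dimensional representations and $q^2-1$ for the two-dimensional one. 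These produce the stated eigenvalue multiplicities, and the check $1+q+(q-1)^2(q+1)+2(q^2-1)=q^2(q+1)=|\Vert(\Gamma)|$ confirms the count. The only delicacy beyond the configuration verification is that the character system be non-degenerate, which follows from the linear independence of the irreducible characters of the semisimple algebra $\Aff(q)$.
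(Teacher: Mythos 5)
Your proposal is correct, and for the heart of the theorem --- parts (1) and (2) --- it is essentially the paper's own argument: the same seven-fold classification of relative positions of flags (with the parallel axiom forcing the split of the ``opposite'' class and the jump from dimension $6$ to $7$), the quadratic relations from the monochromatic complete subgraphs, and the single gallery count for $(T_1T_2)^2$ with its transpose giving the other quartic relation. One economy you could borrow from the paper: you do not need to verify up front that all $49$ products of class matrices have well-defined intersection numbers. It suffices to check that the seven class matrices are linearly independent (disjoint supports, each class non-empty --- this is \cref{lem-almost-arch-aff}), that they lie in $\A(\Gamma)$, and that the one product $(T_1T_2)^2$ stays in their span; this already forces $\dim\A(\Gamma)=7$ and gives (Ar1)--(Ar2), after which the full coherent-configuration property is automatic from \cref{coro-archi-gives-config}. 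Where you genuinely diverge is the multiplicity computation in (3)--(4): the paper restricts $V$ to the action of $T_1$ alone and of $T_2$ alone, so that $\Gamma$ decomposes into disjoint complete graphs $K_q$ and $K_{q+1}$ and the restricted module is read off from \cref{ex-complete-graph}, yielding a small linear system; you instead use the trace pairing, exploiting that every non-identity class matrix has zero diagonal and evaluating the four irreducible characters on the class basis. Both work; the paper's is more combinatorial and avoids computing character values on products, while yours is the standard association-scheme multiplicity formula and generalizes mechanically. One small caveat on your method: linear independence of the characters as functionals on $\Aff(q)$ guarantees a unique solution only if you impose the trace equations on a spanning set of the algebra (all seven class matrices will do, since they form a basis); a hand-picked subset of four would need a separate non-degeneracy check. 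Your $2$-dimensional representation is conjugate to the paper's, and your eigenvalue and multiplicity computations (trace $2q-3$, determinant $q^2-4q+2$, discriminant $4q+1$, and the dimension check $1+q+(q-1)^2(q+1)+2(q^2-1)=q^2(q+1)$) are all correct.
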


See \cref{prop-adj-for-affine-planes} for (1) and (2), \cref{lem-V-as-adj-module} and \cref{coro-eigenvalues} for (3) and (4).

In (2) the Coxeter basis appears to be always the same, when the elements are expressed in terms of~$T_1$ and~$T_2$, but non-isomorphic affine planes of the same order give rise to non-isomorphic coherent configurations -- or in other words, the matrix representations are different.

We add that in (3), we actually have a complete description of~$V$, and (4) is just an illustration. Of course the result of (4) depends only on~$q$, so that the various affine planes of order~$q$ give rise to a family of isospectral graphs.


The next theorem characterizes the planes whose chamber systems are strongly transitive. Recall that a projective plane is called Desarguesian when it consists of the linear subspaces of dimension~$1$ and~$2$ in~$\f_q^3$, for some prime power~$q$, with incidence defined from inclusion; likewise, an affine plane is called Desarguesian when it consists of the affine subspaces of dimension~$0$ and~$1$ in~$\f_q^2$. (Of course these are the ``obvious'' examples of such planes.)

\begin{thm}[\cref{thm-moufang-affine-desargues}, \cref{thm-moufang-proj-desargues} below] \label{thm-moufang-desargues-intro}
  Let~$\mathbf{X}$ be either a projective plane or an affine plane. The following conditions are equivalent:
  \begin{enumerate}
  \item the chamber system~$\Gamma = \chamber(\mathbf{X})$ is strongly-transitive;
    \item $\mathbf{X}$ is Desarguesian.
\end{enumerate}

\end{thm}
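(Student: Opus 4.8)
The plan is to handle the two cases by quite different routes. The projective case is essentially a translation of the building machinery already in place, whereas the affine case --- where $\Gamma$ is \emph{not} a building --- requires a direct geometric argument culminating in an Ostrom--Wagner-type theorem.

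For a projective plane $\P$ I would begin by recalling that $\Gamma = \chamber(\P)$ is exactly a thick spherical building of type $\operatorname{A}_2$ (a generalized triangle). By \cref{thm-strong-actions-equiv-conditions} the present notion of strong transitivity coincides, on spherical buildings, with the classical one (transitivity on pairs formed by a chamber and an apartment through it), and for spherical buildings the latter is equivalent to the Moufang condition. Hence $\chamber(\P)$ is strongly transitive iff $\P$ is Moufang. The equivalence with (2) is then the classical coordinatization theorem: a Moufang projective plane is coordinatized by an alternative division ring, and by Artin--Zorn a finite alternative division ring is a field, so a finite Moufang plane is precisely a Desarguesian one. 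This disposes of the projective case in both directions.

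For the affine case I would first make the seven relative positions of an ordered pair of flags $F_1 = (p_1,\ell_1)$, $F_2 = (p_2,\ell_2)$ explicit, matching them to the Coxeter basis of \cref{thm-intro-affine}(2): equality ($I$); same point, distinct lines ($T_2$); same line, distinct points ($T_1$); the two distance-two positions in which $\ell_1$, respectively $\ell_2$, is the join $p_1p_2$ ($T_1T_2$ and $T_2T_1$); and the two distance-three positions, distinguished precisely by whether $\ell_1$ and $\ell_2$ \emph{meet} off $\{p_1,p_2\}$ or are \emph{parallel} ($T_1T_2T_1$ and $T_2T_1T_2 - T_1T_2T_1$). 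It is exactly this parallel-versus-incident alternative that creates a seventh class where a projective plane has only six. Since the two colours are distinguished by their valencies $q-1$ and $q$, $\Aut(\Gamma)$ is the collineation group of $\AP$, and by \cref{thm-intro-arch-implies-config} strong transitivity is equivalent to flag-transitivity \emph{together with} transitivity on each of these seven classes (equivalently, $\dim\End_G(V) = 7 = \dim\A(\Gamma)$). For (2)$\Rightarrow$(1) it then suffices to check, for $\AP = \f_q^2$, that $A\Gamma L(2,q)$ is flag-transitive with exactly seven orbits on ordered flag pairs, a direct verification against this list.

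The substance is (1)$\Rightarrow$(2). The key reduction is that transitivity on the single class $\{\ell_1 = p_1p_2\}$ already forces $G$ to be $2$-transitive on the points of $\AP$: any ordered pair of distinct points may be completed to a flag pair of this type by choosing an auxiliary line through the second point, so transitivity on the class transports any point pair to any other. (Transitivity on the ``meeting'' distance-three class likewise yields transitivity on suitably marked triangles $p_1,p_2,\ell_1\cap\ell_2$, available if stronger homogeneity is wanted.) The theorem then rests on the promised affine variant of Ostrom--Wagner: \emph{a finite affine plane admitting a point-$2$-transitive collineation group is Desarguesian.} I would prove this by passing to the projective completion $\P = \AP \cup \ell_\infty$ and using the $2$-transitivity on affine points to manufacture the central collineations --- translations, and homologies whose centre and axis avoid $\ell_\infty$ --- that verify enough instances of Desargues, equivalently that coordinatize $\AP$ by a field. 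The main obstacle is precisely here: there is no building to invoke, and affine $2$-transitivity does \emph{not} upgrade to $2$-transitivity on $\P$ (the group fixes $\ell_\infty$), so the classical Ostrom--Wagner theorem is unavailable as a black box. The delicate work is to extract from the affine transitivity a controlled supply of perspectivities with prescribed centres and axes --- in effect to show $\AP$ is a translation plane whose coordinatizing quasifield is forced to be a field --- which is where the real content of the affine Ostrom--Wagner variant lies.
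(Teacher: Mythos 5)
The affine half of your argument rests on a key lemma that is false. You reduce $(1)\Rightarrow(2)$ to the claim that \emph{a finite affine plane admitting a point-$2$-transitive collineation group is Desarguesian}, and you correctly identify this as the place where the real work lies --- but no amount of work with central collineations will supply it, because the Hering plane of order $27$ is a counterexample: its collineation group contains $T \rtimes \operatorname{SL}_2(\f_{13})$, where $\operatorname{SL}_2(\f_{13})$, of order $2184 = 3\cdot 728$, acts transitively on the $728$ non-origin points of $\f_3^6$, so the group is $2$-transitive on the $729$ points of this non-Desarguesian plane; the exceptional nearfield plane of order $9$ is a second such example. Passing from strong transitivity to mere $2$-transitivity on points therefore discards exactly the information needed to exclude these planes. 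The paper keeps that information: from $\dim\A(\Gamma)=7$ and the decomposition of $V$ it deduces (\cref{coro-restrictions-strong-trans-action-affine}) that $G$ must have irreducible representations of degrees $q^2-1$, $(q-1)^2(q+1)$ and $q$, hence that $q^2(q-1)^2(q+1)$ divides $|G|$; it then invokes the classification of flag-transitive finite linear spaces to reduce to a short explicit list, and eliminates the Hering, L\"uneburg and order-$9$ nearfield planes one by one using precisely these representation-theoretic constraints (the last case requiring an explicit computation of the character degrees of a group with point-stabilizer of order $3840$). There is no known elementary ``affine Ostrom--Wagner'' to substitute for this, which is exactly why the paper's introduction apologizes for relying on the classification.

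Your projective argument is also weaker than it appears: the implication ``strongly transitive $\Rightarrow$ Moufang'' is not a citable general fact for rank-$2$ spherical buildings (it fails for infinite generalized polygons, e.g.\ free constructions), and for finite projective planes its known proofs pass through the Ostrom--Wagner theorem, so the detour through alternative division rings and Artin--Zorn is at best a repackaging. The paper's own shortcut is more direct: strong transitivity gives transitivity on ordered non-degenerate triangles, hence $2$-transitivity on points (\cref{lem-strong-2-trans}), and Ostrom--Wagner applies; its official proof of \cref{thm-moufang-proj-desargues} instead mirrors the affine argument through the classification, precisely so that the two cases run in parallel. Your description of the seven relative positions, and of the verification that the affine group over $\f_q$ has seven orbits on ordered pairs of flags, does match the paper's easy direction $(2)\Rightarrow(1)$.
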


Some comments are in order, for many readers will rightly suspect that the statement about projective planes boils down to something well-known. Indeed, when~$G$ acts on~$\P$, it is easy to show that strong transitivity of the induced action on~$\chamber(\P)$ implies that~$G$ acts 2-transitively on points, so that one can deduce the statement from the celebrated result by Ostrom and Wagner \cite{twotrans}. This is certainly the ``right'' way to go about the projective case, since the argument in {\em loc.\ cit}.\ is elementary.

It is only natural to wonder whether the affine case also reduces to such a classical statement. I am indebted to Bill Kantor for discussing this with me in a series of private communications. First, a simple but nice remark is that strong transitivity is equivalent to transitivity on the set of non-degenerate ordered triangles (this holds for both projective and affine planes; see \cref{lem-strong-2-trans} for more on this). In turn, transitivity on such triangles is equivalent to transitivity on the ``affine quandrangles'' mentioned in Theorem 4 from~\cite{twotrans}, and so this result shows that the affine case was also known (although the proof provided is merely an outline). It is interesting that the literature does not seem to contain another mention of the fact that transitivity on triangles implies, for an affine plane, that it is Desarguesian -- for example it is neither recorded in~\cite{dembowski} nor in~\cite{cofman}.

In this paper, we provide independent arguments, which are not very economical: we rely on the classification of linear spaces, started in \cite{moult} and finished in \cite{saxl}, which itself uses, in certain places, the classification of finite simple groups. Our main point here is, on the one hand, that we can treat projective and affine planes on equal footing, for the two arguments we give run along very similar lines, and differ only in the details. On the other hand, and perhaps more importantly, the concepts introduced in this paper have led us to guess the statement in a very natural way.

More generally, it seems to us that strongly transitive (or Moufang) chamber systems on two colours form a reasonable class of graphs to study in earnest. By the results of the present paper, it contains the finite Moufang polygons classified in~\cite{weisstits}, as well as the chamber systems of finite Desarguesian affine planes; in the last section we shall show that {\em clique planes} provide more examples yet.

\subsection{Relationship with other work}

After this paper was circulated online and submitted for publication, a number of connections with existing work have been brought to my attention by several people, including an anonymous referee, and these deserve to be mentioned.

Theorem~\ref{thm-intro-arch-implies-config}, together with the observation that buildings have an architecture (Corollary~\ref{coro-buildings-have-architecture}), gives a construction of a homogeneous coherent configuration from a building. There are alternative approaches: in~\cite[Theorem E (ii)]{Z12}, a configuration is also obtained from a building. The argument was already given in~\cite{Z11}, and in this paper, the construction is subsequently generalized to ``Moore geometries'', which include our example of the Petersen graph.

These homogeneous coherent configurations obtained from buildings seem to agree with ours, but the technicalities are definitely different. The configurations thus obtained have been termed {\em Coxeter schemes}, and they can be characterized in the language of configurations: see~\cite{Z12} (definition before Theorem 5.1.15) or~\cite{mongolito} (definition after Theorem 12.3.3).

In~\cite{Z10}, among other things, homogeneous coherent configurations are constructed from  certain balanced incomplete block designs, and this includes the case of affine planes. Thus parts of Theorem~\ref{thm-intro-affine} can also be found  there.

There are more traces in the literature of a link between buildings and the Bose-Mesner algebras of the associated configurations, see for example Table 2.1.1 and surrounding discussion in ~\cite{russians}. These often pre-date Parkinson's result cited above, describing the algebra as a Iwahori-Hecke algebra. This is in contrast with our approach, as we give our proof of Parkinson's theorem almost at the same time as we prove the existence of architectures (see the very brief proof of Corollary~\ref{coro-buildings-have-architecture}), and so these topics are, for us, very closely related.

We should also mention that architectures seem to fall under the umbrella of ``generalized distances'' which have received some consideration, for example in~\cite{brouwercohen} or more recently in~\cite{abramenko}.

\subsection{Organization \& Acknowledgements}

We start with a study, in \S\ref{sec-arch-strong-actions}, of architectures and strongly transitive actions on general graphs. We are somewhat digressing in \S\ref{sec-compute} where we give an extended computational example. Then in \S\ref{sec-buildings} we specialize to the case of buildings. Finally in \S\ref{sec-affine} we examine affine planes in detail, and briefly conclude with a similar treatment of clique planes.

\bigskip

I am grateful to Nick Gill, Tao Feng and Sam Mattheus for encouraging words about early versions of this paper.

\subsection{Declarations}

During the submission process for {\em Graphs and combinatorics}, it was specified that this section is mandatory. The information is not relevant, but I have resolved to include it.

\begin{itemize}
\item Funding : none

\item Conflicts of interest/Competing interests : none

\item Availability of data and material : non applicable

\item Code availability : non applicable

\end{itemize}

\section{Architectures \& strongly transitive actions} \label{sec-arch-strong-actions}

\subsection{Edge-coloured graphs and their adjacency algebras} \label{subsec-very-general-indeed}

A {\em graph}~$\Gamma $ is given by a set~$\Vert(\Gamma )$ of vertices, and a set~$\Edge(\Gamma )$ of unordered pairs from~$\Vert(\Gamma )$. (Recall our blanket assumption that all graphs are finite unless specified otherwise.) We say that~$\Gamma $ is {\em edge-coloured} (over~$\I$) when there is a surjective map~$\Edge(\Gamma ) \longrightarrow \I$, where~$\I$ is some finite set, usually taken to be~$\I = \{ 1, \ldots, n \}$ for some integer~$n\ge 1$. Of course, the elements of~$\I$ are called the colours, and an edge mapping to~$i \in \I$ under this map is said to be of colour~$i$, and so on. Generally speaking, we use the standard colloquial terminology of ``neighbours'', edges ``incident'' with a vertex, etc.

A {\em path} of length~$k$ in~$\Gamma $ is a sequence~$\gamma = (x_0, x_1, \ldots, x_k)$ of vertices where~$x_{j-1}$ and~$x_{j}$ are joined by an edge for~$0 < j \le k$. We sometimes say {\em gallery} instead of path, and even {\em chamber} instead of vertex (both habits inherited from the theory of buildings). Now suppose that the edge between~$x_{j-1}$ and~$x_{j}$ has the colour~$i_{j} \in \I$; then the {\em type} of~$\gamma $ is~$(i_1, i_2, \ldots, i_{k})$ (a word in the alphabet~$\I$, if you will). The last vertex visited by a gallery~$\gamma $ will be called its {\em end} and will be denoted by~$e(\gamma )$ (so~$e(\gamma ) = x_k$ in the notation above).

We shall write~$V(\Gamma )= \C[\Vert(\Gamma )]$, and most often we just write~$V$ when~$\Gamma $ is understood. Here we adhere to a convention which will help us in computations: we want the elements of~$\End(V)$ to act {\em on the right} on~$V$. Thus we may think of the elements of~$V$ as {\em row} matrices with columns indexed by the vertices of~$\Gamma $, and of the elements of~$\End(V)$ as matrices with rows and columns both indexed by the vertices, and the action of~$T \in \End(V)$ on~$v \in V$ is given by~$ v \cdot T$, the matrix multiplication.

The algebra~$\End(V)$ of linear endomorphisms of~$V$ has the following distinguished elements~$T_i$ for~$i \in \I$, whose actions on the vertices are given by:
\[ x \cdot T_i = \sum_{y \sim_i x} \, y  \]
where, as in the introduction, we put~$x \sim_i y$ when the vertices~$x$ and~$y$ are joined by an edge of colour~$i$. We call these the {\em adjacency operators}. When we think of~$V$ as the space of complex-valued functions~$f$ on~$\Vert(\Gamma )$ with finite support, we have 
\[ (f \cdot T_i)(x) = \sum_{y \sim_i x} \, f(y) \, .   \]
%

Our convention with right actions is taken in order to obtain the following lemma, which will be of frequent use.

\begin{lem} \label{lem-action-monomial}
Let~$x \in \Vert(\Gamma )$. For any sequence~$i_1, \ldots, i_k$ of colours, we have 
\[ x \cdot T_{i_1}T_{i_2} \cdots T_{i_k}  = \sum_\gamma e(\gamma ) \, ,   \]
where~$\gamma $ runs through the galleries of type~$(i_1, \cdots , i_k)$ starting from~$x$.
\end{lem}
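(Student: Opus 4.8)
The plan is to proceed by induction on the length~$k$ of the word~$(i_1, \ldots, i_k)$. The base case~$k=1$ is immediate from the definition of the adjacency operator: we have~$x \cdot T_{i_1} = \sum_{y \sim_{i_1} x} y$, while the galleries of type~$(i_1)$ starting at~$x$ are precisely the pairs~$(x,y)$ with~$y \sim_{i_1} x$, each having end~$e(\gamma) = y$. So the two sides agree.

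For the inductive step, suppose the formula holds for the word~$(i_1, \ldots, i_{k-1})$, so that
\[ x \cdot T_{i_1} \cdots T_{i_{k-1}} = \sum_{\gamma'} e(\gamma') \, , \]
the sum ranging over galleries~$\gamma'$ of type~$(i_1, \ldots, i_{k-1})$ starting at~$x$. Applying~$T_{i_k}$ on the right and using that it is a linear operator, I would obtain
\[ x \cdot T_{i_1} \cdots T_{i_k} = \sum_{\gamma'} \big( e(\gamma') \cdot T_{i_k} \big) = \sum_{\gamma'} \sum_{z \sim_{i_k} e(\gamma')} z \, , \]
where the last equality again invokes the definition of~$T_{i_k}$.

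The key combinatorial observation is then a bijection. Given a gallery~$\gamma'$ of type~$(i_1, \ldots, i_{k-1})$ from~$x$ and a vertex~$z$ with~$z \sim_{i_k} e(\gamma')$, the concatenation~$\gamma = (\gamma', z)$ is a gallery of type~$(i_1, \ldots, i_k)$ from~$x$ with~$e(\gamma) = z$; conversely, every such gallery~$\gamma$ arises uniquely this way, by deleting its last vertex to recover~$\gamma'$ and recording~$z = e(\gamma)$. Under this correspondence the double sum~$\sum_{\gamma'} \sum_{z} z$ becomes exactly~$\sum_\gamma e(\gamma)$, which completes the induction.

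Since each step is a direct unwinding of the definitions together with linearity of the adjacency operators, there is no serious obstacle here; the only point requiring a moment of care is the bijection between extended galleries and pairs consisting of a gallery and an admissible endpoint. It is also worth keeping in mind the convention, fixed just before the statement, that operators act \emph{on the right}, so that they are applied in left-to-right order, matching the reading order of the type~$(i_1, \ldots, i_k)$; this is precisely why the inductive step appends to the \emph{end} of the gallery rather than its beginning.
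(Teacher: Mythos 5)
Your proof is correct and follows exactly the route the paper intends: the paper's own proof is just ``Obvious by induction on~$k$,'' and your argument is precisely that induction, spelled out with the bijection between galleries of length~$k$ and pairs (gallery of length~$k-1$, admissible final vertex). Nothing to add.
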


\begin{proof}
Obvious by induction on~$k$. 
\end{proof}

Of course, if we had used actions on the left, then this lemma would have involved an ugly reversal of the sequence of colours. 

We now define~$\A (\Gamma )$, the {\em adjacency algebra} of~$\Gamma $, to be the subalgebra of~$\End(V)$ generated by the~$T_i$'s.  Each~$T_i$ is symmetric, with real entries (indeed, entries in~$\{ 0,1 \}$). It follows that for any~$S \in \A(\Gamma )$, the matrix~$S^t$ is also in~$\A(\Gamma )$, where~$S^t$ denotes the transpose of~$S$. Similarly, the conjugate~$\bar S$ of~$S \in \A(\Gamma )$ is also an element of~$\A(\Gamma )$. 

As a result, we can discover new relations in~$\A(\Gamma )$ by taking the transposition of known ones. Say we had established the formula for~$(T_1T_2)^2$ as in (1) of~\cref{thm-intro-affine}; the formula for~$(T_2T_1)^2$ would follow by taking transposes. 

\begin{prop} \label{prop-adj-algebra-is-semisimple}
The adjacency algebra~$\A(\Gamma )$ is semisimple. Also, the Bose-Mesner algebra of a coherent configuration is semisimple.
\end{prop}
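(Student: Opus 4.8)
The plan is to show that both algebras in question are $*$-closed subalgebras of a full matrix algebra $\End(V) \cong M_n(\C)$, where $*$ denotes the conjugate transpose $S \mapsto S^* := \overline{S}^t$, and then to prove once and for all that any such subalgebra is semisimple. For $\A(\Gamma )$ the $*$-closure is exactly the content of the remarks preceding the statement: each generator $T_i$ is real and symmetric, so $T_i^* = T_i$, and it was already observed that $S^t \in \A(\Gamma )$ and $\overline{S} \in \A(\Gamma )$ whenever $S \in \A(\Gamma )$; hence $S^* = \overline{S}^t \in \A(\Gamma )$. For the Bose--Mesner algebra the $*$-closure is immediate from the axioms: the matrices $A_i$ have entries in $\{0,1\}$, so $\overline{A_i} = A_i$, and axiom (c) says the transpose of each $A_i$ is again one of the $A_j$. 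Thus it suffices to prove the following lemma: if $A \subseteq M_n(\C)$ is a subalgebra with $S^* \in A$ for all $S \in A$, then $A$ is semisimple.

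To prove the lemma I would argue that the Jacobson radical $\operatorname{rad}(A)$ vanishes. Since $A$ is finite-dimensional, $\operatorname{rad}(A)$ is a nilpotent two-sided ideal. Let $S \in \operatorname{rad}(A)$. Because $A$ is $*$-closed we have $S^* \in A$, and since $\operatorname{rad}(A)$ is a left ideal the product $S^* S$ lies in $\operatorname{rad}(A)$; in particular $S^* S$ is nilpotent. On the other hand $S^* S$ is a positive semidefinite Hermitian matrix, hence diagonalizable with non-negative real eigenvalues, while a nilpotent matrix has all eigenvalues equal to $0$; therefore $S^* S = 0$. Taking the trace gives $\tr(S^* S) = \sum_{i,j} |S_{ij}|^2 = 0$, whence $S = 0$. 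Thus $\operatorname{rad}(A) = 0$ and $A$ is semisimple. Applying the lemma to $A = \A(\Gamma )$ and to $A$ equal to the Bose--Mesner algebra of a coherent configuration yields both assertions.

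The argument is short, and I do not anticipate a serious obstacle; the only point requiring care is the interaction between the ideal structure and the involution. The crux is the single observation that $*$-closure lets us pass from an element $S$ of the radical to the element $S^* S$, which is simultaneously nilpotent (because it remains in the radical) and positive semidefinite (because it is a Gram-type matrix), two properties that are incompatible unless $S^* S = 0$; the faithfulness of the trace form then forces $S = 0$. One should simply record that $\operatorname{rad}(A)$ is indeed nilpotent in the finite-dimensional setting, and---if one wishes to sidestep any question about a unit in $\A(\Gamma )$---note that an ideal of a possibly non-unital subalgebra is still closed under left multiplication by elements of that algebra, which is all the argument uses.
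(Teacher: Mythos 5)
Your proof is correct and follows essentially the same route as the paper's: both reduce to the fact that a subalgebra of $\End(V)$ closed under conjugate-transpose is semisimple, and both kill the radical by producing the element $S^*S$ (the paper's $\bar M M^t$) inside it and observing that its vanishing trace $\sum_{i,j}|S_{ij}|^2$ forces $S=0$. The only cosmetic difference is that you get $S^*S$ into the radical via the one-sided ideal property rather than by noting, as the paper does, that the radical is itself stable under transposition and conjugation; either works.
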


\begin{proof}
  Both statements follow from the fact that any subalgebra~$\A$ of~$\End(V)$ (with~$V$ finite-dimensional) which is stable under transposition and complex conjugation is semisimple. This is a classical fact, and an easy exercise.
%
\end{proof}

\begin{ex} \label{ex-complete-graph}
Suppose~$\Gamma $ is a complete graph on~$q+1$ elements: that is, $\Gamma $ has~$q+1$ vertices, each of them connected to~$q$ neighbours, using only one colour for the edges. The adjacency matrix~$T_1$, in the natural basis, is~$J-I$, where all the coefficients of~$J$ are~$1$'s. Standard linear algebra shows that~$(T_1 - qI)(T_1+I) = 0$, and indeed that~$(X- q)(X+1) \in \C[X]$ is the minimal polynomial for~$T_1$. Thus 
\[ \A(\Gamma ) \cong \frac{\C[X]} {(X-q)(X+1)} \cong \C \times \C \, . 
\]
Moreover, $T_1$ is diagonalisable, with the~$q$-eigenspace having dimension~$1$, spanned by the vector 
\[ \sum_{x \in \Vert(\Gamma )} \, x \, .   \]
The~$(-1)$-eigenspace, consisting of all vectors 
\[ \sum_{x \in \Vert(\Gamma )} \lambda _x x \quad\textnormal{with}~ \sum_{x \in \Vert(\Gamma )} \lambda _x = 0 \, , 
\]
has dimension~$q$. It is instructive to use \cref{lem-action-monomial} to check directly, for a vertex~$x$, the relation 
\[ x \cdot T_1^2 = (q-1) x \cdot T_1 + qx \, .   \]
\end{ex}

The computations made in this example will find an echo throughout the paper, for the edge-coloured graphs which are of interest to us usually have special features. It seems worth giving a definition, for future reference.

\begin{defn} \label{def-chamb-system-regular}
We say that~$\Gamma $ is a {\em chamber system} when, for each colour~$i$, the graph obtained by deleting all the edges whose colour is not~$i$ is a disjoint union of complete graphs. Further, a simple graph is called {\em $d$-regular} when each vertex has exactly~$d$ neighbours; an edge-coloured graph~$\Gamma $ is called {\em regular with orders~$(q_i)_{i \in \I}$} when each vertex has~$q_i$ neighbours at the end of an edge of colour~$i$, for each~$i \in \I$. In this second case, if we forget the colours, we obtain a simple graph which is $\left(\sum_i q_i\right)$-regular.
\end{defn}

Typical graphs in this paper will be chamber systems which are regular with orders~$(q_i)_{i \in \I}$. For these, \cref{ex-complete-graph} shows that 
\[ (T_i - q_iI)(T_i+I) = 0  \]
for~$i \in \I$. This explains why relations of this form are ubiquitous in the sequel.

\subsection{Architectures}

The definition of an architecture on~$\Gamma $ was given in the Introduction, but since we now use actions on the right, let us repeat it here: it is a map~$\delta \colon \Vert(\Gamma ) \times \Vert(\Gamma ) \longrightarrow \A(\Gamma )$ such that :

\begin{enumerate}
\item[(Ar1)] If we fix a vertex~$x$ and consider~$\W = \big\{ \delta (x, y) ~|~ y \in \Vert(\Gamma ) \big\}$, then~$\W$ does not depend on~$x$, and it is a basis for~$\A(\Gamma )$.
\item[(Ar2)] For a given~$T \in \W$, and a vertex~$x$, we have 
\[ x \cdot T = \sum_{\delta (x, y) = T} \, y \, .  \]
\end{enumerate}

We collect the basic consequences of the definition in a proposition.

\begin{prop} \label{prop-arch-basic-props}
  Let~$\delta $ be an architecture on~$\Gamma $.
  \begin{enumerate}
  \item $\Gamma $ is connected.
    \item The~$\A(\Gamma )$-module generated by any vertex~$x$ within~$V(\Gamma )$ is faithful, or in other words, it is the regular representation of~$\A(\Gamma )$. In particular, if~$T \in \A(\Gamma )$ fixes a vertex, then~$T=I$.
  \item For vertices~$x, y$, we have~$\delta (x,y) = I \Longleftrightarrow x= y$. 
\item $\delta$  is~$\Aut(\Gamma )$-invariant : 
\[ \delta (gx, gy) = \delta (x, y) \quad (g\in \Aut(\Gamma ), x,y\in \Vert(\Gamma )) \, .  \]
\item If~$\W = \{ A_0, \ldots, A_d \}$, then 
\[ A_i \cdot A_j = \sum_k a_{ijk} A_k \qquad\textnormal{where}~a_{ijk} \in \n \, . \]
In fact, these integers can be interpreted as follows. Let~$x, z$ be vertices such that~$\delta (x, z) = A_k$. Then the number of vertices~$y$ such that~$\delta (x, y) = A_i$ and~$\delta (y,z) = A_j$ is~$a_{ijk}$ (in particular this number depends only on the colours~$i,j,k$, not on~$x$ or~$z$).

\item If~$x, y$ are vertices, then~$\delta (y, x) = \delta (x, y)^t$, the transpose of the matrix~$\delta (x, y)$. In particular~$\W$ is stable under transposition. It follows that~$\delta $ is symmetric (that is, satisfies~$\delta (x, y) = \delta (y, x)$ for all~$x, y$) if and only if~$\A(\Gamma )$ is commutative, if and only if~$\A(\Gamma )$ is comprised entirely of symmetric matrices.
\end{enumerate}
\end{prop}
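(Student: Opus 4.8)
The plan is to extract from (Ar2) a single combinatorial tool and run every part off it. Read at the level of matrix entries, and recalling our right-action convention (so that $x\cdot T$ is the row of $T$ indexed by $x$), axiom (Ar2) says precisely that for $T\in\W$ the entry $T_{xy}$ equals $1$ if $\delta(x,y)=T$ and $0$ otherwise. Two consequences will be used constantly: each element of $\W$ is a $0$--$1$ matrix, and since every ordered pair $(x,y)$ determines exactly one value $\delta(x,y)\in\W$, the matrices in $\W$ have nonempty, pairwise disjoint supports with $\sum_{T\in\W}T=J$. Together with \cref{lem-action-monomial}, which identifies the support of $x\cdot S$ for a monomial $S$ with the set of endpoints of galleries issued from $x$, this is essentially all the input needed.

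For (1), I would note that for any $S\in\A(\Gamma)$ the support of $x\cdot S$ lies in the connected component of $x$, by \cref{lem-action-monomial} and linearity; since by (Ar2) every vertex $y$ occurs in the support of $x\cdot\delta(x,y)$, every vertex lies in that component. For (2), I would consider the right-$\A(\Gamma)$-module map $S\mapsto x\cdot S$ from the regular representation to $V$ and prove it injective: writing $S=\sum_{T\in\W}c_T\,T$, the coefficient of a vertex $z$ in $x\cdot S$ is exactly $c_{\delta(x,z)}$, so $x\cdot S=0$ forces $c_T=0$ for every $T$, each $T\in\W$ being some $\delta(x,z)$ by (Ar1). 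This exhibits the module as the (faithful) regular one, and $x\cdot T=x=x\cdot I$ then yields $T=I$. For (3), the same coefficient computation applied to $I=\sum_T c_T\,T$ gives $c_{\delta(x,z)}=[z=x]$ for all $z$; since $z\mapsto\delta(x,z)$ is onto $\W$, this forces $I=\delta(x,x)$ and, reading off the values $z\ne x$, the equivalence $\delta(x,y)=I\iff x=y$.

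Part (4) I would obtain by noting that an automorphism $g$, acting linearly on $V$ on the left, satisfies $g(v\cdot S)=(gv)\cdot S$ for all $S\in\A(\Gamma)$ (clear on the generators $T_i$ since $g$ preserves colours, hence on all of $\A(\Gamma)$). Applying $g$ to (Ar2) at $x$ and comparing with (Ar2) at $gx$ identifies the supports $\{gz:\delta(x,z)=T\}$ and $\{w:\delta(gx,w)=T\}$; taking $T=\delta(x,y)$ places $gy$ in the first set, hence in the second, i.e.\ $\delta(gx,gy)=\delta(x,y)$. Part (5) is a two-way computation of $x\cdot A_iA_j$: expanding $(x\cdot A_i)\cdot A_j$ shows the coefficient of a vertex $z$ is the number of $y$ with $\delta(x,y)=A_i$ and $\delta(y,z)=A_j$, while expanding $\sum_k a_{ijk}\,(x\cdot A_k)$ shows it equals $a_{ijk}$ for the unique $k$ with $\delta(x,z)=A_k$; equating them proves $a_{ijk}\in\n$ and that the count depends only on $i,j,k$.

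The real work is in (6), and the crux is that $\W$ is stable under transposition. I would argue order-theoretically: the $0$--$1$ matrices lying in $\A(\Gamma)$ are exactly the sums $\sum_{k\in K}A_k$ over subsets $K$ (a $0$--$1$ matrix in $\operatorname{span}\W$ must have coefficients in $\{0,1\}$, since the $A_k$ have nonempty disjoint supports), and under the entrywise order these form a lattice isomorphic to the subset lattice, whose minimal nonzero elements are precisely the $A_k$. Since transposition is an entrywise-order isomorphism mapping $\A(\Gamma)$ into itself (as recorded before the proposition, $\A(\Gamma)$ is transpose-stable), it must permute these atoms, so $T^t\in\W$ for every $T\in\W$. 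This is the step I expect to be the main obstacle to phrase cleanly. Once it is in hand, $(\delta(x,y)^t)_{yx}=\delta(x,y)_{xy}=1$ shows $\delta(y,x)=\delta(x,y)^t$ via the entry formula. Finally $\delta$ is symmetric iff every $\delta(x,y)$ equals its transpose, i.e.\ iff every matrix in $\W$, equivalently every matrix in $\A(\Gamma)$, is symmetric; and this is equivalent to commutativity, because products of commuting symmetric matrices are symmetric, so a commutative $\A(\Gamma)$---being generated by the symmetric, pairwise-commuting $T_i$---consists of symmetric matrices, while conversely if all matrices are symmetric then $MN=(MN)^t=N^tM^t=NM$.
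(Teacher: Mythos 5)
Your arguments for (1)--(5) coincide with the paper's: the coefficient-extraction computation $x\cdot S=\sum_z c_{\delta(x,z)}\,z$ is exactly the paper's proof of (2) and (3), the $\Aut(\Gamma)$-equivariance argument for (4) is the same, and for (5) you actually carry out the double-expansion of $x\cdot A_iA_j$ that the paper states is ``obvious by (Ar2)'' and leaves as an exercise. The genuine divergence is in the key step of (6), the stability of $\W$ under transposition. The paper works with the standard inner product on $V$: writing $T=\sum_i\lambda(A_i)A_i$ and $T^t=\sum_i\mu(A_i)A_i$, it uses $\langle x\cdot T,y\rangle=\langle y\cdot T^t,x\rangle$ to derive $\lambda(\delta(x,y))=\mu(\delta(y,x))$, of which $\delta(x,y)^t=\delta(y,x)$ is a special case. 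You instead observe that the $0$--$1$ matrices in $\A(\Gamma)=\operatorname{span}\W$ are precisely the sums $\sum_{k\in K}A_k$ (because the $A_k$ have nonempty, pairwise disjoint supports), that these form a lattice under the entrywise order whose atoms are the $A_k$, and that transposition is an order-automorphism of this lattice, hence permutes the atoms; the identity $\delta(y,x)=\delta(x,y)^t$ then drops out of the entry formula. Both arguments are correct and of comparable length; the paper's adjoint computation yields the slightly more general statement $\lambda(\delta(x,y))=\mu(\delta(y,x))$ for an arbitrary $T\in\A(\Gamma)$ in one stroke, whereas your lattice argument isolates the structural fact that transposition permutes $\W$, which is arguably the cleaner way to see axiom (c) of a coherent configuration in \cref{coro-archi-gives-config}. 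The concluding equivalences (symmetric $\delta$, commutativity, symmetry of all matrices) are handled identically in both.
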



\begin{proof}
Let~$x, y$ be vertices of~$\Gamma $, and let~$T= \delta (x,y) \in \A(\Gamma )$. By (Ar2), the vertex~$y$ appears in the expression for~$x \cdot T$. However by \cref{lem-action-monomial}, we see that~$x \cdot T$ can only involve vertices in the connected component of~$\Gamma $ containing~$x$. This gives (1).
  
Let~$T \in \A(\Gamma )$, and write~$\W = \{ A_0, \ldots, A_d \}$. From (Ar1), we can write 
\[ T = \sum_{i} \lambda (A_i) \, A_i  \]
for some uniquely defined scalars~$\lambda (A_i) \in \C$, so that for a vertex~$x$:
\[ x \cdot T = \sum_{i} \lambda (A_i) \, x \cdot A_i = \sum_{i} \sum_{y:\delta (x,y) = A_i} \lambda (A_i) \, y = \sum_{y \in \Vert(\Gamma )} \lambda (\delta (x,y)) \, y \, .   \]
Here we have used (Ar2) for the second equality. Thus the scalars~$\lambda (A_i)$ can be recovered from the vector~$x \cdot T$ (and we do get all of them from (Ar1)), so (2) is clear.

Now continue assuming that~$T=I$. We have 
\[ x = \sum_{y \in \Vert(\Gamma )} \lambda (\delta (x,y)) \, y \, .   \]
From this, we draw~$\lambda (\delta (x, x)) = 1$ and~$\lambda (\delta (x, y)) = 0$ for~$y \ne x$. In particular $I = \delta (x, x)$, and certainly~$\delta (x, y) \ne \delta (x, x)$ if~$y \ne x$. We have (3).

Now apply~$g \in \Aut(\Gamma )$ to the identity in (Ar2), so that for~$T \in \W$ one has : 
\[ g \cdot (x \cdot T) = \sum_{\delta (x, y) = T} \, g \cdot y \, .   \]
However~$g \cdot (x \cdot T) = (g \cdot x)\cdot T$, and another application of (Ar2), this time at the vertex~$g \cdot x$, gives: 
\[ (g \cdot x)\cdot T = \sum_{\delta  (g \cdot x, z) = T} \, z \, .   \]
Comparing the last two expressions, we see that~$\delta (g \cdot x, z) = T$ happens precisely when~$z= g \cdot y$ with~$\delta (x, y) = T$. In particular~$\delta  (g \cdot x, g \cdot y) = \delta  (x, y)$, and we have (4).

As for (5), the proposed identity certainly holds for some complex numbers~$a_{ijk}$, simply because~$\W$ is a basis for~$\A(\Gamma )$, and the point is only to show that these are nonnegative integers. However, this is obvious by (Ar2). We leave the interpretation of the integers~$n_{ijk}$ as an exercise (we will never use the result in the sequel). 

We turn to (6). Let~$\langle -, - \rangle$ denote the inner product on~$V$ for which the basis of vertices is orthonormal. A restatement of (Ar2) is that 
\[ \langle x \cdot T, y \rangle = \left\{ \begin{array}{l}
  1 ~\textnormal{if}~ \delta (x, y) = T \, , \\
  0 ~\textnormal{otherwise} \, , 
\end{array}\right.  \]
when~$T \in \W$, and~$x, y$ are arbitrary vertices. Further, if we pick an operator
\[ T = \sum_{i} \lambda (A_i) A_i \in \A(\Gamma ) \, ,   \]
it follows that~$\langle x\cdot T, y \rangle = \lambda (\delta (x, y))$. Now write 
\[ T^t = \sum_{i} \mu  (A_i) A_i \, ,  \]
and use that 
\[ \langle x \cdot T, y \rangle = \langle x, y \cdot T^t \rangle = \langle y \cdot T^t, x \rangle \, ,   \]
to deduce that 
\[ \lambda (\delta (x, y)) = \mu  (\delta (y, x)) \, .   \]
The fact that~$\delta (x, y)^t = \delta (y,x) $ is a particular case.

It is now clear that when~$\delta $ is symmetric, all the elements of the algebra~$\A(\Gamma )$ are symmetric matrices (from (Ar1) and the identity just established). This implies, for colours~$i, j$, that~$(T_i T_j)^t = T_i T_j = T_j T_i$, so~$\A(\Gamma )$ is commutative. Conversely, suppose that~$\A(\Gamma )$ is commutative: since its generators~$T_i$ are symmetric matrices, it is then readily seen that all the elements of~$\A(\Gamma )$ are symmetric, and of course~$\delta $ is then symmetric. 
\end{proof}

\topnextpage{\imageright{.5}{\begin{figname} \label{fig-no-arch} A graph without architecture.\end{figname}}{no_arch.pdf}}

\begin{coro} \label{coro-archi-gives-config}
When~$\delta $ is an architecture on~$\Gamma $, the matrices in~$\W$ form a coherent configuration.
\end{coro}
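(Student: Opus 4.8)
The plan is to verify the four defining conditions (a)--(d) of a coherent configuration one at a time, matching each to a fact already in hand. Almost all the content is already packaged in \cref{prop-arch-basic-props}; the single genuinely new observation required is that each element of~$\W$, viewed as a matrix in the basis of vertices, has entries in~$\{0,1\}$.

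First I would record this $\{0,1\}$ observation. Fix~$T \in \W$ and consider its matrix in the vertex basis: its~$(x,y)$-entry is~$\langle x \cdot T, y \rangle$. By (Ar2) we have~$x \cdot T = \sum_{\delta (x,y) = T} y$, so this entry equals~$1$ when~$\delta (x,y) = T$ and~$0$ otherwise. Hence every~$A_i \in \W$ is a~$\{0,1\}$-matrix, and it is non-zero, being a basis element of~$\A(\Gamma )$ (equivalently, for any fixed~$x$ the value~$T = \delta (x,y)$ is attained for some~$y$).

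With this in place, three of the four conditions are immediate citations. Condition (a): by \cref{prop-arch-basic-props}(3) we have~$\delta (x,x) = I$, so~$I \in \W$, and I take~$A_0 = I$. Condition (c), stability under transposition, is exactly \cref{prop-arch-basic-props}(6), through~$\delta (y,x) = \delta (x,y)^t$. Condition (d), that each product~$A_i A_j$ lies in the span of~$\W$, holds because~$\W$ is a basis of the algebra~$\A(\Gamma )$ by (Ar1); the sharper assertion that the structure constants~$a_{ijk}$ are nonnegative integers is \cref{prop-arch-basic-props}(5). The remaining condition (b),~$A_0 + \cdots + A_d = J$, I would obtain by reading off the~$(x,y)$-entry of the sum: from the~$\{0,1\}$ description, the entry of~$A_i$ at~$(x,y)$ equals~$1$ precisely when~$\delta (x,y) = A_i$, and since~$\delta (x,y)$ is a single, well-defined element of~$\W$, exactly one summand contributes a~$1$ at each position~$(x,y)$; thus the sum is the all-one matrix~$J$.

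Since the real work is already carried out in \cref{prop-arch-basic-props}, I do not anticipate a serious obstacle. The only point demanding care is the bookkeeping between operators on~$V$ and their matrices in the vertex basis (compounded by the right-action convention), so that the~$\{0,1\}$-entry identification and condition (b) are written with the row and column indices in the correct order.
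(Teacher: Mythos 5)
Your proof is correct and follows essentially the same route as the paper's: axiom (a) from $\delta(x,x)=I$, (c) from \cref{prop-arch-basic-props}(6), (d) from (Ar1) together with \cref{prop-arch-basic-props}(5), and (b) from the $\{0,1\}$-entry description via (Ar2) plus the fact that $\delta(x,y)$ is well-defined. Your write-up is simply a more explicit version of the paper's argument, with the matrix-entry bookkeeping spelled out.
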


\begin{proof}
From (3) we see that~$I \in \W$, so we may as well assume that the numbering was made so that~$A_0 = I$. From (6) we obtain axiom (c) for coherent configurations, and from (5) we have axiom (d). Property (Ar2) ensures that each matrix~$A_i$ has entries in~$\{ 0,1 \}$, and the fact that any two vertices~$x, y$ have a well-defined distance~$\delta (x,y)$ guarantees that the~$A_i$'s sum up to the all-one matrix, which is axiom (b).
\end{proof}

\begin{ex}
  The simplest example of a graph with architecture is perhaps a complete graph, with only one colour used, where~$\delta (x,x) = I$ for each~$x$, while~$\delta (x,y) = T_1$, the adjacency matrix, when~$x \ne y$.

Consider next the case of a symmetric association scheme, as defined in the Introduction. Recall that this is, on the one hand, a particular case of a coherent configuration; and on the other hand, we may regard such an object as an edge-coloured graph~$\Gamma $, on the set of colours~$\I = \{ 1, 2, \ldots, d \}$, such that the underlying graph (forgetting the colours) is complete. In this situation, put~$\delta (x,y) = T_i$ when~$x \sim_i y$, and~$\delta (x,x) = I$. One checks readily that this is an architecture (generalizing the previous example). The configuration obtained from the corollary is just the one we started with.
  
  Many more examples follow, so here we would like to give examples of graphs for which no architecture can be found. By (1) of the proposition, of course, non-connected graphs are such counter-examples. Consider also the graph on \cref{fig-no-arch}. Here~$v_1$ is connected to~$v_2$ and~$v_3$ with an edge of colour~$1$, while~$v_2$ and~$v_3$ are connected together by an edge of colour~$2$. Suppose~$\delta $ were an architecture on this graph. Then~$\delta (v_1, v_1) = I$ by (3) of the proposition, while~$\delta (v_1, v_2) = \delta (v_1, v_3)$ using (4) (the graph has visibly an automorphism of order~$2$ exchanging~$v_2$ and~$v_3$ and fixing~$v_1$).  We must then have~$v_1 \cdot \delta (v_1, v_2) = v_2 + v_3 = v_1 \cdot T_1$ (the first equality by (Ar2)), so (2) of the proposition shows that~$\delta (v_1, v_2) = T_1$. If (Ar1) were to hold, then~$\{ I, T_1 \}$ would be a basis for~$\A(\Gamma )$; however, $T_2$ is clearly not in the algebra generated by~$T_1$, so~$\delta $ cannot exist.

  In this example, one can check (with more work) that~$\A(\Gamma )$ has dimension 5. This is another good reason why no architecture exists on~$\Gamma $: the dimension of~$\A(\Gamma )$, by (Ar1), cannot be more than the number of vertices.
\end{ex}

\subsection{Distance-regular graphs}

Suppose~$\Gamma $ is a (finite) graph, without colours, which is connected with diameter~$d$. One may form the graph~$\Gamma _i$, for~$0 \le i \le d$, with~$\Vert(\Gamma_i) = \Vert(\Gamma )$ and with an edge between~$x$ and~$y$ in~$\Gamma_i$ if and only if they are at distance~$i$ in~$\Gamma $. Writing~$A_i$ for the adjacency matrix of~$\Gamma_i$, it is natural to ask whether~$\{ A_0, \ldots, A_d \}$ is a symmetric association scheme. Axioms (a), (b) and (c) are visibly satisfied, and one calls~$\Gamma $ {\em distance-regular} when axiom (d) also holds. Very often, this is restated as follows: $\Gamma $ is called distance-regular when there exist integers~$a_{ijk}$, depending only on the indices~$0 \le i, j, k \le d$, such that whenever~$x$ and~$z$ are vertices of~$\Gamma $ at distance~$k$ from one another, the number of vertices~$y$ which are at distance~$i$ from~$x$ and at distance~$j$ from~$z$, is~$a_{ijk}$ (and thus does not depend on~$x$ or~$z$). 

Being distance-regular is closely related to the existence of an architecture on~$\Gamma $. Before we state this, it is best to recall a simple fact about graphs: if the diameter of~$\Gamma $ is~$d$, then the dimension of~$\A(\Gamma )$ is at least~$d+1$ (indeed, let~$A$ be the adjacency matrix of~$\Gamma $; if~$x$ and~$y$ are at distance~$k$ in~$\Gamma $, then the~$(x,y)$ coefficient of~$A^{i}$ is~$0$ when~$i < k$, while the same coefficient is positive for~$A^k$). Note that the dimension of~$\A(\Gamma )$ is the number of distinct eigenvalues for the adjacency matrix. While we may have $\dim \A(\Gamma )> d+1$ in certain cases, it is common to have an equality.

\begin{prop} \label{prop-arch-mono-distance-regular}
  Let~$\Gamma $ be a graph. The following properties are equivalent.

  \begin{enumerate}
  \item $\Gamma $ is distance-regular.
    \item $\Gamma $ has an architecture~$\delta $, and if~$d$ is the diameter of~$\Gamma $, then $\dim \A(\Gamma ) = d+1$.
  \end{enumerate}

Moreover, when these conditions hold, the symmetric association scheme obtained from~$\Gamma $ by the discussion above agrees with that obtained from \cref{coro-archi-gives-config}. In particular, the Bose-Mesner algebra is just~$\A(\Gamma )$.  
\end{prop}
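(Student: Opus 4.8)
The plan is to prove, in both directions, the sharper statement that the hypotheses force $\delta(x,y) = A_{\partial(x,y)}$, where $\partial$ denotes the graph distance and $A_i$ is the distance-$i$ matrix. This identification makes the ``Moreover'' clause immediate: the scheme produced by \cref{coro-archi-gives-config} is then literally the distance scheme of the preceding discussion, and since $\W$ is a basis of $\A(\Gamma)$ by (Ar1), the Bose-Mesner algebra (its span) is $\A(\Gamma)$. Two preliminary remarks guide everything. First, with a single colour $\A(\Gamma)$ is generated by the one symmetric operator $A = T_1$, so it is commutative; hence by part (6) of \cref{prop-arch-basic-props} any architecture on $\Gamma$ yields a \emph{symmetric} association scheme. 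Second, since $(A^k)_{xy}$ counts walks of length $k$ (\cref{lem-action-monomial}) and these entries are non-negative integers, one has $\partial(x,y) = \min\{k : (A^k)_{xy} \neq 0\}$.

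For (1) $\Rightarrow$ (2) I would run the classical three-term recurrence. Distance-regularity gives $A\,A_i = c_i A_{i-1} + a_i A_i + b_i A_{i+1}$, since multiplication by $A$ shifts distance by at most one, and $b_i > 0$ for $i < d$ because a geodesic to a vertex at distance $i+1$ exhibits a neighbour of $x$ at distance $i$ from it. Solving for $A_{i+1}$ and inducting from $A_0 = I$, $A_1 = A$ shows each $A_i \in \A(\Gamma)$ is a polynomial of degree $i$; the $A_0, \dots, A_d$ are then linearly independent, and the recurrences (including $A\,A_d = c_d A_{d-1} + a_d A_d$, as $A_{d+1} = 0$) show their span is $A$-stable and contains $I$, hence equals $\C[A] = \A(\Gamma)$, so $\dim \A(\Gamma) = d+1$. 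Setting $\delta(x,y) = A_{\partial(x,y)}$, axiom (Ar2) is the very definition of the distance-$i$ matrix, and (Ar1) holds because every distance $0, \dots, d$ occurs from each vertex with base-point-independent valency.

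For (2) $\Rightarrow$ (1) the crucial observation is that, for $B \in \W$ and \emph{any} pair $(x,y)$, one has $(B)_{xy} = 1$ exactly when $\delta(x,y) = B$ (a restatement of (Ar2)). Expanding $A^k = \sum_{B \in \W} \alpha^{(k)}_B\, B$ in the Coxeter basis then yields the linchpin identity $(A^k)_{xy} = \alpha^{(k)}_{\delta(x,y)}$ for every $k$. Thus the whole walk-counting data $(A^k)_{xy}$, and in particular $\partial(x,y) = \min\{k : (A^k)_{xy} \neq 0\}$, depends on $(x,y)$ only through $\delta(x,y)$, so $\partial$ factors as $\partial = r \circ \delta$ for a well-defined $r \colon \W \to \{0,1,\dots\}$. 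As all distances $0, \dots, d$ occur, $r$ surjects onto $\{0,\dots,d\}$; and because $|\W| = \dim \A(\Gamma) = d+1$ it is in fact a bijection. Injectivity of $r$ forces, for the unique $B \in \W$ with $r(B) = i$, the equivalence $(B)_{xy} = 1 \Leftrightarrow \partial(x,y) = i$, i.e. $B = A_i$. Hence $\W = \{A_0, \dots, A_d\}$, and being a symmetric coherent configuration by \cref{coro-archi-gives-config} it is precisely the distance scheme, so $\Gamma$ is distance-regular.

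The main obstacle is conceptual and lies entirely in (2) $\Rightarrow$ (1): one must recover metric data from an abstract algebra basis $\W$ with no a priori distance meaning. The identity $(A^k)_{xy} = \alpha^{(k)}_{\delta(x,y)}$ does exactly this, promoting ``$\delta(x,y)$ determines all walk counts from $x$ to $y$'' to ``$\delta$ refines distance''; the hypothesis $\dim \A(\Gamma) = d+1$ is then used in the single, essential place where the equality $d+1 = d+1$ of class counts upgrades this refinement to $r$ being a bijection, and thereby to $\W$ being the distance matrices. (Without it, $r$ would merely be surjective and $\delta$ would strictly refine distance.) Since both implications pin down $\delta(x,y) = A_{\partial(x,y)}$, the ``Moreover'' and the identification of the Bose-Mesner algebra with $\A(\Gamma)$ follow at once.
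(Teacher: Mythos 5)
Your proof is correct and follows the same route as the paper's: both directions come down to identifying $\delta(x,y)$ with the distance matrix $A_{\partial(x,y)}$ and using the dimension count $\dim\A(\Gamma)=d+1$ to show the Coxeter basis consists exactly of the distance matrices. The only difference is that where the paper cites Corollary 4.1.3 and Lemma 4.1.2 of Godsil's book (for $A_i\in\A(\Gamma)$ in one direction, and for recovering the distance matrices from the scheme in the other), you prove these facts inline via the three-term recurrence and the walk-counting argument $(A^k)_{xy}=\alpha^{(k)}_{\delta(x,y)}$; both arguments are sound.
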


\begin{proof}
  Suppose~$\Gamma $ is distance-regular, and let the matrices~$A_i$ be as above. We put~$\delta (x, y) = A_i$ when~$x$ and~$y$ are at distance~$i$ in~$\Gamma $. For this to define an architecture, the first and main difficulty is to verify that~$A_i \in \A(\Gamma )$; but this is a classical fact, see Corollary 4.1.3 in~\cite{godsil}. We see at the same time that the Bose-Mesner algebra must coincide with~$\A(\Gamma )$ (note that~$A_1$ is the adjacency matrix of~$\Gamma $, which generates~$\A(\Gamma )$). At this point, axioms (Ar1) and (Ar2) are obvious. Also, the dimension of~$\A(\Gamma )$ must~$d+1$, the number of matrices in the association scheme.

  Conversely, suppose that (2) holds. By \cref{coro-archi-gives-config}, we have a coherent configuration~$\W$, which must be a symmetric association scheme (all the matrices in~$\A(\Gamma )$ are symmetric). Since the diameter of~$\Gamma $ is~$d$, we may appeal to Lemma 4.1.2 in~\cite{godsil}, which shows that the matrices~$A_0, \ldots, A_d$ in~$\W$ must be the distance matrices in~$\Gamma $ as above. By definition, $\Gamma $ is then distance-regular.

  The rest is clear.
\end{proof}

Symmetric association schemes obtained in this was are sometimes called {\em metric}, or sometimes $P$-{\em polynomial} (in reference to the fact that all elements in the Bose-Mesner algebra are polynomials in~$A_1$).

\subsection{Double cosets \& the algebra of intertwining operators} \label{subsec-double-cosets}

Here we start with a group~$G$ and a subgroup~$B$. We will recall some uses of the double cosets~$BgB$ for~$g \in G$.

When~$g \in G$, we will write~$\bar g \in G/B$ for its canonical image. The set of orbits of~$B$ in~$G/B$, that is $B \bs (G/B)$, can be identified with the set~$B \bs G /B$ of double cosets, under $B \bar g \longleftrightarrow BgB$. The distinction between~$B \bs (G/B)$ and $B\bs G /B$ is often a pedantic one, but in certain situations it will matter; keep in mind that~$B \bar g$ is a subset of~$G/B$, while~$BgB$ is a subset of~$G$, so the identification is certainly not the identity. The notation~$\bar g$, instead of the popular~$gB$, was also chosen to make the distinction clearer. 

There is also a very well-known identification of~$B\bs G /B$ with the set of orbits of~$G$ on the product $G/B \times G/B$, that is, with~$G \bs \left(G/B \times G/B\right)$: for this, use $BgB \mapsto G(\bar 1, \bar g)$ and $G(\bar h, \bar g) \mapsto B h^{-1}gB$.

Finally, put~$V = \C[G/B]$, the corresponding permutation~$G$-module, and consider the algebra~$\End_G(V)$ of linear maps commuting with the action of~$G$, sometimes called intertwining operators. For~$w \in G$, define $\phi_w \colon V \longrightarrow V$ by the formula
\[ \bar g \cdot \phi_w = \sum_{\bar h \, : \,  h^{-1}g \in BwB} \bar h   \]
for~$\bar g \in G/B$. One checks that~$\phi_w$ is well-defined (that is, the condition~$h^{-1}g \in BwB$ really does depend only on~$\bar h$), that it commutes with the action of~$G$ on~$V$, and also that~$\phi_w$ depends only on~$BwB$.

\begin{rmk} \label{rmk-phi-w-inverse}
  A peculiarity of the notation is that~$\bar 1 \cdot \phi_w$ is the sum of the elements of~$B\overline{w^{-1}}$. Later, we shall work with the operators~$T_w:=\phi_{w^{-1}}$ so as to avoid the inverse.
\end{rmk}

For a nice, elementary proof of the following proposition (which is a classic), see~\cite[Lemma 1.2.15]{lux}:

\begin{prop} \label{prop-lux}
Let~$W \subset G$ be a set of representatives for the double cosets of~$B$ in~$G$. Then the operators~$\phi_w$ for~$w \in W$ form a basis for~$\End_G(V)$.
\end{prop}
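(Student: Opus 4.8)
The plan is to exhibit a canonical basis of~$\End_G(V)$ indexed by the $G$-orbits on~$G/B \times G/B$, and then to recognize each~$\phi_w$ as one of these basis elements.

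First I would record this ``orbit basis''. For an orbit~$\mathcal{O}$ of~$G$ on~$G/B \times G/B$ (for the diagonal action~$x \cdot (\bar g, \bar h) = (\overline{xg}, \overline{xh})$), let~$A_{\mathcal{O}} \in \End(V)$ be the operator with~$\langle \bar g \cdot A_{\mathcal{O}}, \bar h \rangle = 1$ when~$(\bar g, \bar h) \in \mathcal{O}$ and~$=0$ otherwise, where~$\langle -,-\rangle$ makes the cosets an orthonormal basis. Since~$\mathcal{O}$ is $G$-stable, $A_{\mathcal{O}}$ lies in~$\End_G(V)$. Conversely, encoding any~$T \in \End(V)$ by its matrix~$c_{\bar g, \bar h} = \langle \bar g \cdot T, \bar h\rangle$, the commuting relation~$x \cdot (\bar g \cdot T) = (\overline{xg}) \cdot T$ translates into~$c_{\bar g, \bar h} = c_{\overline{xg}, \overline{xh}}$ for all~$x$; that is, $T$ is $G$-equivariant if and only if its matrix is constant on the $G$-orbits, if and only if~$T = \sum_{\mathcal{O}} c_{\mathcal{O}} A_{\mathcal{O}}$. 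As the operators~$A_{\mathcal{O}}$ have pairwise disjoint supports they are linearly independent, and hence form a basis of~$\End_G(V)$. In particular~$\dim \End_G(V)$ equals the number of $G$-orbits on~$G/B \times G/B$, which by the identification recalled above is~$|B \bs G /B| = |W|$.

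Next I would match the~$\phi_w$ with these orbit operators. Directly from the definition, $\langle \bar g \cdot \phi_w, \bar h \rangle = 1$ precisely when~$h^{-1} g \in BwB$. This condition is invariant under the diagonal action (replacing~$g,h$ by~$xg,xh$ leaves~$h^{-1}g$ unchanged) and well-defined on cosets (replacing~$g,h$ by~$gb, hb'$ with~$b,b'\in B$ replaces~$h^{-1}g$ by~$b'^{-1}h^{-1}gb$, still in the same double coset). Thus the support~$\{(\bar g, \bar h) : h^{-1}g \in BwB\}$ is $G$-stable, and the identification between~$B \bs G /B$ and~$G \bs (G/B \times G/B)$ recalled above shows it is a single orbit~$\mathcal{O}_w$. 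Hence~$\phi_w = A_{\mathcal{O}_w}$, and~$w \mapsto BwB \mapsto \mathcal{O}_w$ is a bijection from~$W$ onto the set of orbits. Therefore~$\{ \phi_w : w \in W\}$ is exactly the orbit basis, which proves the proposition.

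There is no serious obstacle here; the content is entirely bookkeeping. The only points needing care are the left/right conventions (the group acts on the left on~$G/B$ while operators act on the right, so $G$-equivariance reads~$c_{\bar g, \bar h} = c_{\overline{xg}, \overline{xh}}$) and the verification that the support of~$\phi_w$ is a single orbit rather than a union of several. This last point is where I would be most careful: it rests on getting the direction of the identification~$G(\bar h, \bar g) \mapsto B h^{-1}g B$ to match the formula defining~$\phi_w$, after which independence and spanning both fall out at once from the disjointness of orbit supports and the equality of cardinalities.
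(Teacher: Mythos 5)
Your argument is correct. Note that the paper does not actually prove this proposition: it is quoted as a classic, with a pointer to the literature (Lemma 1.2.15 of the reference \cite{lux}) for ``a nice, elementary proof''. What you have written is precisely that standard argument: equivariant operators are exactly those whose matrices are constant on the $G$-orbits of $G/B\times G/B$, the indicator operators of the orbits are linearly independent because their supports are disjoint and they span by the constancy observation, and each $\phi_w$ is the indicator operator of the single orbit matched to the double coset $BwB$ under the identification $G(\bar h,\bar g)\leftrightarrow Bh^{-1}gB$. This also settles the exercise the paper leaves to the reader in \cref{rmk-config-on-GB}, namely that the $\phi_w$ are the matrices of the ``basic configuration'' attached to the action of $G$ on $G/B$. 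The two bookkeeping points you flag --- operators acting on the right while $G$ acts on the left, and the direction of the double-coset identification --- are indeed the only delicate spots, and you handle both correctly; whether your support set $\{(\bar g,\bar h):h^{-1}g\in BwB\}$ is read as the orbit of $BwB$ or of $Bw^{-1}B$ is immaterial, since all the proof needs is that $BwB\mapsto\mathcal{O}_w$ is a bijection onto the set of orbits. (The only caveat, irrelevant for this paper where everything is finite, is that for infinite $G/B$ the orbit operators need not preserve the finitely supported functions, so the spanning step would require an extra word.)
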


In particular, we see that~$\dim \End_G(V) = |B \bs G /B|$ (the two numbers can be simultaneously infinite).

\begin{rmk} \label{rmk-config-on-GB}
We leave it as an exercise to check that, in the canonical basis for~$V$, the operators~$\phi_w$ become precisely the matrices in the ``basic configuration'' associated with the action of~$G$ on~$G/B$, as in the Introduction (use the description of the~$G$-orbits on~$G/B \times G/B$ in terms of double cosets as above).
\end{rmk}

Finally, we describe the correspondence between the~$G$-module structure of~$V$ and its structure as an~$\End_G(V)$-module, assuming that~$G$ is finite now. Write 
\[ V = \bigoplus_{i \in J} m_i S_i  \]
where the~$S_i$'s are simple~$G$-modules, with~$S_i$ not isomorphic to~$S_j$ for~$i \ne j$. Schur's Lemma gives immediately that 
\[ \End_G(V) \cong \prod_{i \in J} M_{m_i}(\C) \, .   \]
However, if we call~$\A$ the algebra on the right hand side, then we know the structure of its simple modules : there is (up to isomorphism) exactly one for each~$i \in J$, afforded by the projection~$\A \longrightarrow M_{m_i}(\C)$. Correspondingly, the isomorphism classes of simple modules of~$\End_G(V)$ are indexed by~$J$, and we pick a representative~$U_i$ for~$i \in J$, noting that~$\dim U_i = m_i$. Thus {\em there is a bijection between the simple~$G$-modules occuring in~$V$, and the simple~$\End_G(V)$-modules} (both sets of isomorphism classes are in bijection with the set~$J$).

There is nice way of expressing this. Rewrite~$m_i S_i$, the direct sum of~$m_i$ copies of~$S_i$, as~$S_i \otimes U_i$, where~$U_i$ is viewed as trivial~$G$-module of dimension~$m_i$. Now, with a little thought, we realize that~$m_i S_i$ (an isotypical summand in~$V$) is stable under the action of~$\End_G(V)$, and indeed that it can be described as a sum of~$\dim S_i$ copies of~$U_i$. It seems reasonable to write~$S_i \otimes U_i$ also for this~$\End_G(V)$-module. (In general, there is no good reason for us to be able to tensor two~$\End_G(V)$-modules together, and~$\End_G(V)$ does not necessarily have a ``trivial'' module, so this is really just suggestive notation.)

In the end, we can summarize the situation by writing 
\[ V = \bigoplus_{i \in J} S_i \otimes U_i \, ,   \]
with the actions and conventions as above. We note that the multiplicity of~$U_i$ as an~$\End_G(V)$-module occuring in~$V$ is~$\dim S_i$, just like the multiplicity of~$S_i$ as~$G$-module is~$\dim U_i$.


\subsection{Strongly transitive actions}

Combining the material above, we let~$\Gamma $ be an edge-coloured graph, and~$G$ be a group acting on~$\Gamma $ by graph automorphisms. For a vertex~$x$, an element~$g \in G$, and an operator~$T \in \A(\Gamma )$, we have thus 
\[ g \cdot (x \cdot T) = (g \cdot x) \cdot T \in V \, .   \]
Equally clear is the inclusion~$\A(\Gamma ) \subset \End_G(V)$.

Now suppose~$G$ acts transitively on~$\Vert(\Gamma )$. If we choose a favourite vertex~$x_0$, and if we let~$B$ denote its stabilizer in~$G$, then we can identify~$\Vert(\Gamma )$ with~$G/B$ whenever convenient. The vector space~$V = V(\Gamma )$ is then seen as~$\C[G/B]$, and the considerations of \S\ref{subsec-double-cosets} apply.

Recall from the Introduction that the action of~$G$ on~$\Gamma $ is called {\em strongly transitive} when it is transitive on~$\Vert(\Gamma )$ and satisfies~$\End_G(V) = \A(\Gamma )$. 

\begin{thm} \label{thm-strong-trans-implies-arch}
  Suppose~$\Gamma $ admits a strongly transitive action. Then~$\Gamma $ has a unique architecture. It is characterized by the property that, given vertices~$x$ and~$y$, the operator~$\delta (x, y)$ is exactly determined by the~$\stab_G(x)$-orbit containing~$y$.

  More precisely, once~$\Vert(\Gamma )$ is identified with~$G/B$ by the choice of a vertex with stabilizer~$B$, we introduce operators~$T_w \in \A(\Gamma )$ in the proof, for each~$w \in G$, where~$T_w$ depends only on~$B w B$;  if~$W \subset G$ is a set of representatives for the double cosets, then~$\W= \{ T_w : w \in W  \}$; and the map~$\delta $ is defined by 
\[ \delta (\bar g, \bar h) = T_w  \]
where~$w$ is such that~$g^{-1} h \in BwB$.
\end{thm}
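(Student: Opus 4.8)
The plan is to produce the architecture explicitly from the operators of \S\ref{subsec-double-cosets}, and then establish uniqueness by a rigidity argument. First I would set $T_w := \phi_{w^{-1}}$, as flagged in \cref{rmk-phi-w-inverse}, and rewrite the defining formula for $\phi$: inverting the membership condition $h^{-1}g \in Bw^{-1}B$ turns it into $g^{-1}h \in BwB$, so that $\bar g \cdot T_w = \sum_{g^{-1}h \in BwB} \bar h$. Since the double coset $Bg^{-1}hB$ is unchanged when $g,h$ are multiplied on the right by elements of $B$, it depends only on the cosets $\bar g, \bar h$, and I can define $\delta(\bar g, \bar h) = T_w$ with $g^{-1}h \in BwB$ without ambiguity. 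Strong transitivity enters here: $T_w \in \End_G(V) = \A(\Gamma)$, so $\delta$ does land in the adjacency algebra.

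Verifying the two axioms is then immediate. Axiom (Ar2) is exactly the displayed formula for $\bar g \cdot T_w$, read off the fibres of $\delta(\bar g, -)$. For (Ar1), fixing $\bar g$ and letting $\bar h$ range over $G/B$ makes $g^{-1}h$ range over all of $G$, hence $Bg^{-1}hB$ ranges over every double coset; thus $\{\delta(\bar g, \bar h) : \bar h \in G/B\} = \{T_w : w \in W\}$ independently of $\bar g$, and by \cref{prop-lux} (reindexed through $w \mapsto w^{-1}$) this set is a basis of $\End_G(V) = \A(\Gamma)$. This settles existence.

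For uniqueness I would let $\delta'$ be any architecture, with associated basis $\W'$. By \cref{prop-arch-basic-props}(4) the map $\delta'$ is $G$-invariant, and by (Ar2) each $A \in \W'$ is a $0/1$ matrix; being an element of $\A(\Gamma) = \End_G(V)$, such a matrix is constant on the $G$-orbits of $\Vert(\Gamma) \times \Vert(\Gamma)$. Since the $T_w$ are precisely the indicator matrices of those orbits (\cref{rmk-config-on-GB}), each $A$ must be a sum $\sum_{w \in S} T_w$ over some set $S$ of double cosets. The coherent-configuration property supplied by \cref{coro-archi-gives-config} forces the supports of the members of $\W'$ to be disjoint and to cover everything, so the index sets $S$ partition the set of double cosets; but $|\W'| = \dim \A(\Gamma)$ equals the number of double cosets, so a partition into that many nonempty blocks can only be the partition into singletons. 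Hence $\W' = \W$, and comparing the fibres of $\delta'(x_0, -)$ and $\delta(x_0, -)$ at the base vertex $x_0$ gives $\delta'(x_0, y) = \delta(x_0, y)$ for every $y$; $G$-invariance together with vertex-transitivity then propagates the equality to all pairs.

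The genuinely substantive step, and the one I expect to be the main obstacle, is this rigidity: showing that a basis of $\End_G(V)$ consisting of $0/1$ matrices with pairwise disjoint supports summing to the all-one matrix is forced to be exactly the orbit-indicator family $\{T_w\}$. Everything else is bookkeeping resting on \cref{prop-lux} and the standard dictionary between double cosets, $B$-orbits on $G/B$, and $G$-orbits on $G/B \times G/B$.
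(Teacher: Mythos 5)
Your proposal is correct and follows essentially the same route as the paper: existence via $T_w := \phi_{w^{-1}}$ and \cref{prop-lux}, and uniqueness by the counting argument that $|\W'| = \dim\A(\Gamma) = |B\backslash G/B|$ forces a partition into singletons. The only cosmetic difference is that the paper runs the uniqueness count locally (spheres about $\bar 1$ versus $B$-orbits on $G/B$) while you run it globally (supports of the $0/1$ matrices in $\W'$ versus $G$-orbits on $G/B \times G/B$); these are the same argument under the standard dictionary between $B$-orbits and double cosets.
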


The uniqueness means, in particular, that choosing another base-point would not alter~$\delta $. However, we caution that the operator~$T_w$ associated to~$w$ does depend on the choice, as is explained at the end of the proof.

\begin{proof}
We choose a base vertex~$x_0$, and for~$g \in G$ put~$\bar g = g \cdot x_0$. This lets us identify~$\Vert(\Gamma )$ with~$G/B$, where~$B$ is the stabilizer of~$x_0$ (with~$\bar g$ identified with the class of~$g$ in~$G/B$, also written~$\bar g$ elsewhere in the paper). At the end of the proof, we study what happens when~$x_0$ is replaced by another vertex. 
  
We have defined in \S\ref{subsec-double-cosets} the operators~$\phi_w \in \End_G(V)$ for~$w \in G$.  By assumption, we have~$\phi_w \in \A(\Gamma )$, and we introduce~$T_w := \phi_{w^{-1}} \in \A(\Gamma )$. Now we put, for vertices~$\bar g, \bar h \in G/B$:
\[ \delta (\bar g, \bar h) = T_w  \]
where~$w$ is such that~$g^{-1} h \in BwB$. This is well-defined, and moreover we note that the condition~$g^{-1}h \in BwB$ is equivalent to~$\overline{g^{-1}y} \in B \bar w$. Let us verify that~(Ar1) and~(Ar2) hold.

First we note an invariance property of~$\delta $. When~$\sigma \in G$, we observe that~$(\sigma g)^{-1}(\sigma h) = g^{-1} h$, implying that~$\delta  (\sigma  \bar g, \sigma  \bar h) = \delta (\bar g, \bar h)$. In other words, $\delta $ is~$G$-invariant. (Of course eventually we shall know that the architecture is~$\Aut(\Gamma )$-invariant, by (4) of \cref{prop-arch-basic-props}.)


The easy part of (Ar1) comes at once: the set~$\W$, seemingly dependent on the choice of a vertex, really depends only on the~$G$-orbit of the vertex; however, the action is assumed to be vertex-transitive, so~$\W$ is independent of all choices.

Let us continue with the vertex~$\bar 1$. For the rest of property (Ar1), we note that~$\delta (\bar 1, \bar h) = T_w$ is equivalent to~$\bar h \in B \bar w$. Thus the set~$\W$ is comprised of all the operators~$T_w$ for~$w \in G$; as observed above, $T_w$ only depends on the double coset~$BwB$ (or equivalently on the orbit~$B \bar w$), and if we pick one~$w$ in each double coset, we obtain a basis for~$\End_G(V) = \A(\Gamma )$ (\cref{prop-lux}). We have (Ar1), and the set~$\W$ is as described in the theorem.

As in \cref{rmk-phi-w-inverse}, we compute for any~$w \in G$:
\[ \bar 1 \cdot T_w = \bar 1 \cdot \phi_{w^{-1}} = \sum_{\bar h : h \in BwB} \bar h = \sum_{\bar h \in B \bar w}  \bar h \, ,  \]
or in other words~$\bar 1 \cdot T_w $ is the sum of the elements in one~$B$-orbit on~$G/B$, namely~$B\bar w$. This is (Ar2) for the vertex~$x= \bar 1$, whence (Ar2) holds in general by $G$-invariance. We have established that~$\delta $ is an architecture on~$\Gamma $.

We have also just seen that~$\delta (\bar 1, \bar h)$ is determined by the~$B$-orbit containing~$\bar h$. By~$G$-invariance, we deduce immediately that~$\delta (x, y)$ is determined exactly by the~$\stab_G(x)$-orbit containing~$y$, for any two vertices~$x, y$. Clearly, this characterizes~$\delta $ among possible architectures. In particular, choosing a different base-point would not have affected~$\delta $.

However, the theorem claims more precisely that~$\delta $ is the unique architecture on~$\Gamma $. Indeed, if~$\delta '$ is another architecture, with Coxeter basis~$\W'$, then consider the spheres 
\[ \{ x \in \Vert(\Gamma ) : \delta' (\bar 1, x) = T \} \, ,   \]
for~$T \in \W'$. Each such sphere is stable under the action of~$B$, by (4) of \cref{prop-arch-basic-props}, or in other words it breaks up as a union of~$B$-orbits. However the number of such spheres is $|\W'| = \dim \A(\Gamma )  = \dim \End_G(V) = | B\bs G /B |$, which is also the number of~$B$-orbits. So each sphere is just one orbit, from which it follows that~$\delta = \delta '$.

A word of caution, to conclude (expanding on the remark before the proof). If we had chosen the vertex~$x_0' = \sigma \cdot x_0$ instead of~$x_0$, where~$\sigma \in G$, then we would have arrived at the same function~$\delta $, as already established (a direct verification is also straightforward). However, this new choice would have defined operators~$T'_w$, for~$w \in G$, and one can check that~$T_w =T'_{\sigma w \sigma^{-1}}$, so here the choice of base-point matters.
\end{proof}

\begin{rmk}
Following up on \cref{rmk-config-on-GB}, we add that the coherent configuration defined from the architecture, which the theorem constructs, is again the ``basic'' configuration. In \S\ref{sec-compute} we will comment on the benefits of the new approach. These are very real, but still, it is important that the concept of architecture does not require a strongly transitive action, and will have other applications, as will be best examplified with the work on affine planes below.
\end{rmk}




\subsection{A representation à la Steinberg}

As an application of our discussion of strongly transitive actions, we discuss an analogue of the {\em Steinberg representation}, which is normally defined for groups acting on buildings only. Then we investigate an example related to the Mathieu group~$M_{24}$.

The next result is the only one in this paper which is more easily stated with simplicial complexes rather than edge-coloured graphs. As claimed in the in Introduction, the two points of view are essentially equivalent, but for simplicity we only recall how to construct a graph from such a complex. We consider {\em labelled} simplicial complexes : the vertices of such a complex are coloured, or in other words there is a map from the set of vertices to~$\I$, and we require that the vertices belonging to a given simplex be of different colours. From a labelled simplicial complex, we construct a graph~$\Gamma $, whose vertices are the maximal simplices of~$X$, and with an edge of colour~$i$ between~$u$ and~$v$ if there is a vertex~$u_i$ of colour~$i$ in~$u$, and a vertex~$v_i$ of colour~$i$ in~$v$, such that $u - \{ u_i \} = v - \{ v_i \}$ (here we assume~$u \ne v$ of course). Note that~$\Gamma $ is then a chamber system (as in Definition~\ref{def-chamb-system-regular}). (Under mild assumptions, one can reconstruct~$X$ from~$\Gamma $, but we skip this discussion; for this, see Theorem~1.3.1 in~\cite{charlot}.)

\begin{thm} \label{thm-steinberg}
Suppose that~$\Gamma $ is obtained from the labelled simplicial complex~$X$, with set of colours~$\I= \{ 1, \ldots, n \}$, and assume that all maximal simplices of~$X$ are of dimension~$n-1$. Assume further that the group~$G$ acts on~$X$, and that the induced action on~$\Gamma $ is strongly transitive. Then the~$G$-module~$H_{n-1}(X, \C)$ is irreducible (or zero). Moreover, it occurs with multiplicity one in~$V= \C[\Vert(\Gamma )]$.
\end{thm}

\begin{proof}
  We assume that~$H_{n-1}(X, \C)$ is nonzero. The complex~$X$ is~$(n-1)$-dimensional, so $H_{n-1}(X, \C) = Z_{n-1}(X)$, the subspace of cycles in degree~$n-1$, or in other words the kernel of the differential $\partial \colon C_{n-1}(X) \longrightarrow C_{n-2}(X)$, where~$C_*(X)$ stands for the chains on~$X$ (with complex coefficients) in degree~$*$.

  Notice that~$C_{n-1}(X)$ can be indentified with~$V= \C[\Vert(\Gamma )]$. Let us choose, for each simplex of~$X$, the orientation given by the natural order on the set~$\I$ of colours. Pick~$f \in V$, and think of~$f$ as a complex-valued function on the set of oriented $(n-1)$-simplices. Then~$\partial(f) = 0$ happens precisely when, for each~$(n-2)$-simplex~$\tau $, we have 
\[ \sum_{\sigma : \tau ~\textnormal{is a face of}~ \sigma  } f(\sigma ) = 0 \, .   \]
(Our choice of orientations has eliminated all possible signs.) Thinking now of~$f$ as a function on the vertices of~$\Gamma $, we see that~$\partial(f) = 0$ happens precisely when~$f$ sums to~$0$ on each 1-residue, that is, on each connected component of the graph obtained from~$\Gamma $ by keeping only the edges of colour~$i$, for each~$i$. (These connected components are in bijection with the~$(n-2)$-simplices of~$X$, which are naturally coloured by~$\I$.) We see finally that~$\partial(f) = 0 \Longleftrightarrow f \cdot T_i = -f$ for each colour~$i$.

Now, assume that~$\A(\Gamma ) = \End_G(V)$. Then the algebra~$\End_G(V)$ is generated by the operators~$T_i$, and it follows that each nonzero~$f \in H_{n-1}(X, \C)$ spans a 1-dimensional~$\End_G(V)$-module. More precisely, as there can be at most one~$\End_G(V)$-module, say~$M$, which has dimension~$1$ and in which all the generators~$T_i$  act as~$-Id$, we see that~$H_{n-1}(X, \C)$ is an isotypical summand in~$V$, splitting as a sum of copies of~$M$.

Our discussion above shows that, as a~$G$-module, the subspace~$H_{n-1}(X, \C)$ is irreducible and occurs with multiplicity~$1$ (since it is an isotypical summand with multiplicity~$\dim M = 1$).
\end{proof}

We can use this as a criterion to prove that a certain action is {\em not} strongly transitive.

\begin{ex} \label{ex-M24}
  The sporadic groups act on various graphs, which are meant to be analogues of buildings. It is natural to investigate the extent to which the analogy actually works, and now we can ask whether the action of the group under scrutiny is strongly transitive. When dealing with buildings, this is a basic requirement.

  Let us focus here on the Mathieu group~$M_{24}$. It acts on a certain {\em geometry} of rank~$3$, from which one defines at once a labelled simplicial complex~$X$ of dimension~$2$ and an edge-coloured graph~$\Gamma $, related as above. This is described at length in Chapter 7 of~\cite{ash2}, of which we shall extract very little indeed. Let us now prove that the action of~$M_{24}$ on~$\Gamma $ is not strongly transitive. If it were, then by the theorem the represention on~$H_2(X, \C)$ would be irreducible. However, $X$ has 6325 vertices, 64515 edges, and 79695 triangles, so its Euler characteristic is 21505. Using that~$X$ is connected, we deduce that 
\[ \dim H_2(X, \C) = 21504 + \dim H_1(X, \C) \ge 21504 \, .   \]
A glance at the character table for~$M_{24}$ reveals that the Mathieu group does not have an irreducible representation of such a large dimension. This contradiction shows that the action is not strongly transitive.
\end{ex}

\subsection{Graphs with the same intersection numbers}

We conclude the generalities with an application of some results of Higman on coherent configurations.  Consider all the graphs having the same adjacency algebra. What do they have in common? Here we must distinguish between several variants of the question. If we merely mean that we have~$\Gamma_1$ and~$\Gamma_2$, two edge-coloured graphs, and an isomorphism of algebras~$\A(\Gamma_1) \cong \A(\Gamma_2)$, then there seems to be very little to say. If we have, on the other hand, an isomorphism of algebras-with-distinguished-generators, so that the elements~$T_1, T_2, \ldots $ correspond to each other (and the same number of colours is employed in both graphs), and if moreover the modules~$V(\Gamma_1)$ and~$V(\Gamma_2)$ are isomorphic as modules over~$\A(\Gamma_1) = \A(\Gamma_2)$, then we may gather some information. For example, for graphs with only one colour, this is the situation when~$\Gamma_1$ and~$\Gamma_2$ have the same eigenvalues (with the same multiplicities), and there is a vast literature on the subject (a recent textbook is~\cite{nica}).

Is it useful to consider graphs with ``the same'' architecture, in relation to the above questions? Again, we must make a distinction, and the danger here is to require too much: if~$\Gamma_1$ and~$\Gamma_2$ define the very same coherent configuration~$\W$, via an architecture, then in practice the adjacency operators~$T_i$ will be in~$\W$, and more often than not we will be able to conclude that~$\Gamma_1$ and~$\Gamma_2$ are isomorphic, which is not the most interesting situation. However, let us suppose that there is again an isomorphism of algebras-with-distinguished-generators between~$\A(\Gamma_1)$ and~$\A(\Gamma_2)$, that~$\Gamma_1$ and~$\Gamma_2$ both have an architecture leading to coherent configurations~$\W_1$ and~$\W_2$, and that the isomorphism~$\A(\Gamma_1) \cong \A(\Gamma_2)$ identifies the basis~$\W_1$ with~$\W_2$. We shall summarize this setup by saying that~$\Gamma_1$ and~$\Gamma_2$ have {\em the same intersection numbers} (for the configurations~$\W_1$ and~$\W_2$ will indeed have the same intersection numbers, cf.\ the Introduction). Then we have:

\begin{prop}\label{prop-criterion-same-V}
  Suppose~$\Gamma_1$ and~$\Gamma_2$ have the same intersection numbers. Then~$V(\Gamma_1)$ and~$V(\Gamma_2)$ are isomorphic as~$\A(\Gamma_1) = \A(\Gamma_2)$-modules. In particular, $\Gamma_1$ and~$\Gamma_2$ have the same number of vertices.
\end{prop}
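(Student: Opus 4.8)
The plan is to extract, from the shared combinatorial data (the intersection numbers), a purely algebraic object that determines the module $V(\Gamma)$ up to isomorphism, and then invoke the hypotheses to see that both graphs produce the same such object. The key observation is that, by \cref{coro-archi-gives-config}, each $\Gamma_t$ ($t=1,2$) comes with a coherent configuration $\W_t = \{A_0^{(t)}, \ldots, A_d^{(t)}\}$ whose Bose-Mesner algebra is $\A(\Gamma_t)$, and that the module $V(\Gamma_t)$ is the \emph{standard module} on which these matrices act. I would first make precise what ``the same intersection numbers'' buys us: we have an algebra isomorphism $\Phi\colon \A(\Gamma_1) \xrightarrow{\sim} \A(\Gamma_2)$ sending each distinguished generator $T_i$ of the first to the corresponding generator of the second, and sending $\W_1$ onto $\W_2$ as \emph{ordered} bases, so that $\Phi(A_k^{(1)}) = A_k^{(2)}$ for all $k$. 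Via $\Phi$ we regard both $V(\Gamma_1)$ and $V(\Gamma_2)$ as modules over the single abstract algebra $\A := \A(\Gamma_1) \cong \A(\Gamma_2)$, and the goal becomes showing these two $\A$-modules are isomorphic.

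Next I would reduce the isomorphism question to a numerical one using semisimplicity. By \cref{prop-adj-algebra-is-semisimple}, $\A$ is semisimple, so each module $V(\Gamma_t)$ is determined up to isomorphism by the multiplicities with which the simple $\A$-modules occur in it; equivalently (since we are over $\C$), by its character, i.e.\ by the function $S \mapsto \tr(S \mid V(\Gamma_t))$ on $\A$. It therefore suffices to show that the two modules afford the same character, and because $\W_t$ is a basis of $\A$ it is enough to check $\tr(A_k^{(1)} \mid V(\Gamma_1)) = \tr(A_k^{(2)} \mid V(\Gamma_2))$ for each basis element. This is where the coherent-configuration structure does the work: in the standard module the matrix $A_k$ is a $0/1$-matrix acting by ordinary matrix multiplication, so $\tr(A_k)$ simply counts the number of vertices $x$ with $\delta(x,x) = A_k$; by (3) of \cref{prop-arch-basic-props} this is $0$ unless $A_k = I$, in which case it equals the number of vertices. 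Thus the character is already pinned down by the intersection numbers, via a structural identity I would isolate.

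The cleanest way to make the previous step rigorous, and the part I expect to be the genuine content of the argument, is to recover the number of vertices $m_t = |\Vert(\Gamma_t)|$ intrinsically from the intersection numbers alone. Here I would use the regular-representation identity from (2) of \cref{prop-arch-basic-props}: fixing a vertex, the map $T \mapsto x_0 \cdot T$ identifies $\A$ with its own regular representation, so $V(\Gamma_t)$ contains a copy of the regular representation of $\A$, and by dimension count $V(\Gamma_t)$ is itself free of rank one over a suitable idempotent decomposition — more concretely, one computes $m_t$ as a fixed linear combination of intersection numbers (for instance via the valencies $a_{k k^* 0}$, where $A_{k^*} = A_k^t$, whose sum is $m_t - $ something determined combinatorially, or via the Frame-type relations). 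Since both graphs have \emph{identical} intersection numbers, we get $m_1 = m_2$, which both proves the ``in particular'' clause and supplies the common dimension needed to conclude that the two characters, having equal values on every basis vector, coincide. The main obstacle is precisely this recovery of $m_t$ from the $a_{ijk}$ in a way that is manifestly independent of the matrix realization; once that is in hand, semisimplicity and equality of characters finish the proof immediately. This is exactly the point at which I would invoke the results of Higman on coherent configurations alluded to in the section heading, rather than reprove them.
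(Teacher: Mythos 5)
Your proof is correct, but it takes a genuinely different route from the paper's. The paper disposes of the proposition in one line by citing Higman's result that the intersection numbers of a coherent configuration determine its irreducible degrees and multiplicities, which together describe the standard module. You instead give an essentially self-contained character-theoretic argument: by semisimplicity (\cref{prop-adj-algebra-is-semisimple}) the module $V(\Gamma_t)$ is determined by its character, the character of the standard module vanishes on every $A_k \ne I$ (since $\delta(x,x)=I$ forces the off-identity classes to have zero diagonal) and equals $m_t = |\Vert(\Gamma_t)|$ on $A_0=I$, so everything reduces to recovering $m_t$ from the intersection numbers. That last step is in fact easier than you make it sound: the valency $n_k$ of the class $A_k$ equals $a_{kk^*0}$ (compare diagonal entries of $A_kA_{k^*}$), and since $\sum_k A_k = J$ one gets $\sum_k n_k = m_t$ exactly --- no ``minus something'' correction and no need for Frame relations or for Higman's theorem at this point. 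With that observation your argument is complete and entirely elementary, which is a real gain over the paper's black-box citation; the trade-off is that Higman's result also yields the stronger conclusion (the individual multiplicities and degrees, used elsewhere in the paper, e.g.\ in the discussion following the proposition), whereas your argument only establishes the isomorphism of the modules and the equality of vertex counts, which is all the proposition asks for. The brief detour through the regular representation and ``freeness over an idempotent decomposition'' is unnecessary and can be cut.
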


\begin{proof}
This follows from a nice result by Higman~\cite[\S5]{higman}, which states that the intersection numbers determine all the other ``parameters'' of the configuration, including the ``irreducible degrees'' and the ``multiplicities'', which together describe~$V(\Gamma_i)$.
\end{proof}

More generally, the result by Higman quoted in the proof shows that graphs with the same intersection numbers have a lot of properties in common, and the proposition is just an illustration.

\section{Cell mutliplications \& Computational considerations}\label{sec-compute}

In this section we expand on \cref{ex-peter-intro} from the Introduction. Suppose a finite group~$G$ acts transitively on a set~$X$, so that we can identify~$X$ with~$G/B$ for some subgroup~$B$. Then we know from the opening paragraphs how to define a coherent configuration from this. It is possible to write down explicitly the corresponding matrices. However, when we want to work with the Bose-Mesner algebra, all we know {\em a priori} is that the dimension is~$|B \bs G / B|$ (\cref{rmk-config-on-GB}), and it is unclear whether the algebra can be generated by fewer generators.

Suppose now that~$G$ acts on an edge-coloured graph~$\Gamma $, that the stabilizer of some vertex is~$B$, and that the action is strongly transitive (which may be checked by a dimension count). Then we know that the Bose-Mesner algebra can be generated by as many matrices as there are colours, and the relations between these can be worked out easily.

A possibility that is opened up is the investigation of the ``cell multiplication rules'', as explained below -- roughly this means a description of the groups which are intermediate between~$B$ and~$G$. In the case of buildings, as is well-known, there is a very nice, conceptual description of this (see for example Proposition 11.16 from~\cite{weiss}). In the general case, we must compute, and we want to argue that this is quite feasible in our setup.

We first describe the general problem, and then return to  \cref{ex-peter-intro}. Note that the material in this section is independent from the sequel, and may be skipped.

\subsection{Cell multiplication rules}

Let~$B$ be a subgroup of the group~$G$. Any subset of~$G$ which is stable under multiplication by~$B$ on either side must be a union of double cosets. This applies in particular to a product $BgB \cdot BhB$. We say that we have given ``cell multiplication rules'' when we have offered a recipe for computing the decomposition of any such product explicitly as a union of double cosets. (This is standard terminology in the literature on buildings.) Here we shall do just this under the assumption that~$G$ act strongly transitively on an edge-coloured graph~$\Gamma $, in such a way that~$B$ is the stabilizer of some vertex~$x_0$.  We keep this hypothesis for this section, and we use freely the canonical architecture on~$\Gamma $, as well as the identification of~$\Vert(\Gamma )$ with~$G/B$.

A definition will be useful. We may see a vector~$f \in V(\Gamma )$ as a function $f \colon \Vert(\Gamma )\longrightarrow \C$, so that 
\[ f= \sum_{x \in \Vert(\Gamma )} f(x) x \, .   \]
The {\em support} of~$f$ is then $\supp(f) = \{ x \in \Vert(\Gamma ) : f(x) \ne 0 \}$.

We can now state:

\begin{lem}
Let~$w, v \in G$, let~$T_w, T_v \in \W$ be the corresponding operators, and let~$\bar g,\bar h  \in G/B$ be vertices. Then
\[ \bar h \in \supp(\bar g \cdot T_w T_v) \Longleftrightarrow g^{-1}h \in  BwB \cdot BvB \, .   \]
\end{lem}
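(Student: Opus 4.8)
The plan is to compute the coefficient of $\bar h$ in the vector $\bar g \cdot T_w T_v$ explicitly, and then to recognise that this coefficient is nonzero precisely when the double-coset condition on the right holds. First I would apply (Ar2) at the vertex $\bar g$ to obtain
\[ \bar g \cdot T_w = \sum_{\bar k \,:\, g^{-1}k \in BwB} \bar k, \]
using that $\delta(\bar g, \bar k) = T_w$ is equivalent to $g^{-1}k \in BwB$ by the description of $\delta$ given in \cref{thm-strong-trans-implies-arch}. Applying $T_v$ term by term and invoking (Ar2) once more gives
\[ \bar g \cdot T_w T_v = \sum_{\bar k \,:\, g^{-1}k \in BwB} \ \sum_{\bar h \,:\, k^{-1}h \in BvB} \bar h, \]
so the coefficient of $\bar h$ equals $c_{\bar h} = \#\{ \bar k \in G/B : g^{-1}k \in BwB \text{ and } k^{-1}h \in BvB \}$. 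Before proceeding I would check that the two conditions on $k$ depend only on the class $\bar k$: replacing $k$ by $kb$ with $b \in B$ leaves $g^{-1}k \in BwB$ unchanged (right stability of $BwB$ under $B$) and leaves $k^{-1}h \in BvB$ unchanged (left stability of $BvB$), so $c_{\bar h}$ is well-defined.

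Next comes the point on which the argument really turns: no cancellation can occur. Each $T_w$ has entries in $\{0,1\}$ (being a member of the coherent configuration $\W$, by \cref{coro-archi-gives-config}), so $T_w T_v$ has nonnegative integer entries, whence $c_{\bar h} \ge 0$. Consequently $\bar h \in \supp(\bar g \cdot T_w T_v)$ if and only if $c_{\bar h} \ge 1$, that is, if and only if there exists a vertex $\bar k$ with $g^{-1}k \in BwB$ and $k^{-1}h \in BvB$. This reduces the lemma to the purely set-theoretic equivalence
\[ \big(\exists\, k :\ g^{-1}k \in BwB \text{ and } k^{-1}h \in BvB\big) \iff g^{-1}h \in BwB \cdot BvB. \]

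The remaining step — routine, yet the only one requiring an actual verification — is this equivalence, which I would establish directly in both directions. For the forward implication, if $g^{-1}k \in BwB$ and $k^{-1}h \in BvB$, then $g^{-1}h = (g^{-1}k)(k^{-1}h) \in BwB \cdot BvB$. For the converse, write $g^{-1}h = pq$ with $p \in BwB$ and $q \in BvB$, and set $k = gp$; then $g^{-1}k = p \in BwB$ and $k^{-1}h = p^{-1}g^{-1}h = q \in BvB$, producing the required witness. Combining this equivalence with the support computation yields the statement. I expect the genuinely substantive observation to be the nonnegativity remark, since it is exactly what guarantees that the support coincides with the existence of a witness $\bar k$, rather than with a count that merely happens to survive cancellation.
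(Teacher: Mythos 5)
Your proposal is correct and follows essentially the same route as the paper: expand $\bar g \cdot T_w T_v$ as a double sum over intermediate vertices $\bar k$, observe that membership in the support is equivalent to the existence of a witness $\bar k$, and verify the set-theoretic equivalence with $BwB\cdot BvB$ in both directions (your witness $k = gp$ coincides with the paper's $k = hb^{-1}$). Your explicit remarks on the well-definedness of the conditions on $\bar k$ and on the nonnegativity of the coefficients are details the paper leaves implicit, but they do not change the argument.
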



\begin{proof}
  We compute:
  \begin{align*}
  (\bar g \cdot T_w) \cdot T_v  & = \left( \sum_{\bar k : g^{-1}k \in BwB} \bar k\right) \cdot T_v\\
    &= \sum_{\bar k : g^{-1}k \in BwB} \quad \sum_{\bar h : k^{-1}h \in BvB} \bar h \, . 
\end{align*}
So~$\bar h \in \supp(T_w T_v(\bar g))$ if and only if we can find~$k \in G$ with~$g^{-1}k \in BwB$ and~$k^{-1}h \in BvB$. When this is the case, we  multiply out and find~$g^{-1} h \in BwB \cdot BvB$. Conversely, if $g^{-1}h = ab$ with~$a \in BwB$ and~$b \in BvB$, then put~$k= h b^{-1}$, so that~$g^{-1} k = a \in BwB$ and~$k^{-1}h = b \in BvB$.
\end{proof}

To formulate this as a ``cell multiplication rule'', select a set~$W= \{ w_1, \ldots, w_d \}$ of representatives for the double cosets, and for a pair of indices~$i,j$, put 
\[ K_{ij} = \{ k : a_{ijk} \ne 0 \} \, , 
\]
where the integers~$a_{ijk}$ are defined by 
\[ T_{w_i} T_{w_j} = \sum_k a_{ijk} T_{w_k} \, .   \]

\begin{coro}
For any~$i,j$, we have 
\[ Bw_iB \cdot Bw_jB = \bigcup_{k \in K_{ij}} Bw_kB \, .   \]
\end{coro}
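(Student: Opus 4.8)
The plan is to deduce the Corollary directly from the preceding Lemma, using property (Ar2) to read off which double cosets occur from the intersection numbers. First I observe that, as noted at the start of the section, the product~$Bw_iB \cdot Bw_jB$ is stable under multiplication by~$B$ on either side, hence is a union of double cosets; thus it equals~$\bigcup_{k \in K} Bw_kB$ for a well-defined index set~$K \subseteq \{ 1, \ldots, d \}$, and the whole task is to show~$K = K_{ij}$. Since a double coset~$Bw_kB$ lies in~$Bw_iB \cdot Bw_jB$ precisely when its representative~$w_k$ does, it suffices to determine, for each~$k$, whether~$w_k \in Bw_iB \cdot Bw_jB$.

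To detect this, I would apply the Lemma with~$w = w_i$ and~$v = w_j$, taking the base vertex~$\bar g = \bar 1$ and testing the vertex~$\bar h = \bar{w_k}$. As~$g^{-1}h = w_k$ in this case, the Lemma yields

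\[ \bar{w_k} \in \supp\big(\bar 1 \cdot T_{w_i} T_{w_j}\big) \Longleftrightarrow w_k \in Bw_iB \cdot Bw_jB \, . \]

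So everything reduces to understanding the support of the vector~$\bar 1 \cdot T_{w_i} T_{w_j}$.

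For this I would expand~$T_{w_i} T_{w_j} = \sum_k a_{ijk} T_{w_k}$ and apply the~$\A(\Gamma )$-action together with (Ar2), in the concrete form computed in the proof of~\cref{thm-strong-trans-implies-arch}, namely~$\bar 1 \cdot T_{w_k} = \sum_{\bar h \in B\bar{w_k}} \bar h$. This gives

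\[ \bar 1 \cdot T_{w_i} T_{w_j} = \sum_k a_{ijk} \sum_{\bar h \in B\bar{w_k}} \bar h \, . \]

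The orbits~$B\bar{w_k}$ are pairwise disjoint as~$k$ ranges over the chosen representatives, so the inner sums do not overlap; moreover each~$a_{ijk} \in \n$ is a nonnegative integer by part (5) of~\cref{prop-arch-basic-props}, so no cancellation can occur. Hence a vertex~$\bar h \in B\bar{w_k}$ lies in the support if and only if~$a_{ijk} \ne 0$, that is, if and only if~$k \in K_{ij}$. Specialising to~$\bar h = \bar{w_k}$ and combining with the displayed equivalence shows~$w_k \in Bw_iB \cdot Bw_jB \Leftrightarrow k \in K_{ij}$, which is exactly~$K = K_{ij}$.

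There is no serious obstacle here; the only points requiring a little care are the disjointness of the orbits~$B\bar{w_k}$ for distinct representatives and the nonnegativity of the intersection numbers, both of which guarantee that the support of~$\bar 1 \cdot T_{w_i} T_{w_j}$ faithfully records the set~$K_{ij}$ with no accidental cancellation. Everything else is a direct translation between the algebraic identity~$T_{w_i} T_{w_j} = \sum_k a_{ijk} T_{w_k}$ and the set-theoretic product of double cosets, mediated by the Lemma.
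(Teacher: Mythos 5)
Your proof is correct and follows essentially the same route as the paper: both compute the support of the vector $\bar 1 \cdot T_{w_i}T_{w_j}$ in two ways, once via the expansion $\sum_k a_{ijk}\, \bar 1 \cdot T_{w_k}$ (using that $\bar 1 \cdot T_{w_k}$ is the sum of the vertices in $B\bar w_k$) and once via the preceding Lemma. Your extra remarks on the disjointness of the orbits and the nonnegativity of the $a_{ijk}$ merely make explicit what the paper leaves implicit.
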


\begin{proof}
Examine the vector 
\[ v = \bar 1 \cdot T_{w_i} T_{w_j} = \sum_k a_{ijk} \bar 1 \cdot T_{w_k} \, .   \]
Reasoning with the right hand side first, we recall that~$\bar 1 \cdot T_{w_k}$ is the sum of the vertices in~$B\bar w_k$, so~$\supp(v) = \bigcup_{k \in K_{ij}} B \bar w_k$. However, from the lemma we know that~$\bar h \in \supp(v)$ if and only if $h \in Bw_i B \cdot B w_j B$. The corollary follows.
\end{proof}

Here is another way of stating the result. Put 
\[ \A^{01}(\Gamma ) = \left\{ \sum_i n_i T_{w_i} : n_i \in \{ 0,1 \} \right\} \, . 
\]
Define an operation $\odot$ on~$\A^{01}(\Gamma )$ by 
\[ T_{w_i} \odot T_{w_j} = \sum_k \min(1, n_{ijk}) T_{w_k} \, .   \]
Likewise, define 
\[ \sum_i n_i T_{w_i}  \oplus \sum_{i} m_i T_{w_i} = \sum_i \min(1, n_i + m_i) T_{w_i} \, .   \]
The notation~$\A^{01}(\Gamma )$ hides the dependence on the basis~$\W$, even though we are really describing an operation on a coherent configuration. The elements of~$\A^{01}(\Gamma )$ can be identified with the subsets of~$\W$, with~$\oplus$ corresponding to the union, and the operation~$\odot$ is what Zieschang calls the ``complex multiplication'' on a coherent configuration, see~\cite{mongolito}.

It may be worth pointing out, on the other hand, that the definition of~$\A^{01}(\Gamma )$ and its two operations does not depend on the choice of a set~$W$ of representatives, although we have used such a choice for notational convenience.  

We will compare~$\A^{01}(\Gamma )$ with~$\B(G,B)$, which we define to be the set of all subsets of~$G$ which are stable under multiplication by elements of~$B$ on either side, or equivalently, the unions of double cosets of~$B$ in~$G$. Endowed with union and intersection, $\B(G,B)$ is boolean algebra. It also carries a multiplication, unsurprisingly defined by 
\[ X \cdot Y = \{ xy : x \in X, y \in Y \}
\]
for~$X, Y \in \B(G,B)$. This product is distributive with respect to~$\bigcup$, but is not always commutative.

The following is a summary of the discussion; the details should be obvious now.

\begin{prop}
There is a bijection between~$\B(G,B)$ and~$\A^{01}(\Gamma )$, which takes~$BgB$ to~$T_g$ for any~$g \in G$, and under which the operations of union and multiplication on~$\B(G, B)$ correspond respectively to~$\oplus$ and~$\odot$ on~$\A^{01}(\Gamma )$. 
\end{prop}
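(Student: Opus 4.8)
The plan is to make the bijection explicit and then check each operation separately, leaning on the Lemma and its Corollary proved just above. Fix the chosen set of representatives $W = \{ w_1, \ldots, w_d \}$. Every element of~$\B(G,B)$ is uniquely a union $\bigcup_{i \in S} Bw_iB$ for some $S \subseteq \{ 1, \ldots, d \}$, and every element of~$\A^{01}(\Gamma )$ is uniquely $\sum_{i \in S} T_{w_i}$ for such an~$S$; so I would define $\Phi \colon \B(G,B) \longrightarrow \A^{01}(\Gamma )$ by $\bigcup_{i \in S} Bw_iB \mapsto \sum_{i \in S} T_{w_i}$. Both sides are thereby identified with the power set of the $d$-element set of double cosets, so~$\Phi$ is a bijection, and it does send~$BgB$ to~$T_g$: indeed $BgB = Bw_iB$ for a unique~$i$, and $T_g = T_{w_i}$ because~$T_w$ depends only on the double coset~$BwB$ (\cref{thm-strong-trans-implies-arch}). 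The independence of this picture from the choice of~$W$ has already been noted.

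The union/$\oplus$ correspondence is immediate. The union of $\bigcup_{i \in S} Bw_iB$ and $\bigcup_{i \in S'} Bw_iB$ is indexed by $S \cup S'$, while if $n_i, m_i \in \{ 0,1 \}$ are the indicator coefficients of~$S$ and~$S'$, then $\min(1, n_i + m_i) = 1$ precisely when $i \in S \cup S'$. Hence $\Phi(X \cup Y) = \Phi(X) \oplus \Phi(Y)$.

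The substance is the multiplication/$\odot$ correspondence, and I would obtain it by generalizing the Lemma from single cosets to arbitrary elements. Writing $X = \bigcup_{i \in S} Bw_iB$ and $Y = \bigcup_{j \in S'} Bw_jB$, a computation identical to the proof of the Lemma — using $\bar 1 \cdot \left( \sum_{i \in S} T_{w_i} \right) = \sum_{\bar k \, : \, k \in X} \bar k$ and $\bar k \cdot T_{w_j} = \sum_{\bar h \, : \, k^{-1}h \in Bw_jB} \bar h$, the latter by $G$-invariance — shows that

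\[ \supp\left( \bar 1 \cdot \Big( \sum_{i \in S} T_{w_i} \Big)\Big( \sum_{j \in S'} T_{w_j} \Big) \right) = \big\{ \bar h : h \in X \cdot Y \big\} \, . \]

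On the other hand, expanding the genuine algebra product as $\sum_k c_k T_{w_k}$ with $c_k = \sum_{i \in S, \, j \in S'} a_{ijk}$ and applying it to~$\bar 1$ shows that the same support equals $\bigcup_{k \, : \, c_k \ne 0} B \bar w_k$. Comparing the two descriptions (both are right-$B$-stable subsets of~$G$) gives $X \cdot Y = \bigcup_{k \, : \, c_k \ne 0} Bw_kB$, whose image under~$\Phi$ is $\sum_{k \, : \, c_k \ne 0} T_{w_k}$. This is exactly $\Phi(X) \odot \Phi(Y)$, since truncating each coefficient of the genuine product to $\min(1, c_k)$ yields~$1$ precisely when $c_k \ne 0$.

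The only point deserving care — and the one place where the argument could go wrong — is that the truncation $\min(1, \cdot)$ faithfully detects the set-theoretic product. This works solely because the intersection numbers~$a_{ijk}$ are nonnegative integers (\cref{prop-arch-basic-props}, part (5)): with no cancellation possible, $c_k = \sum_{i \in S, \, j \in S'} a_{ijk}$ is nonzero exactly when some~$a_{ijk}$ with $i \in S$, $j \in S'$ is nonzero, i.e.\ exactly when~$k$ contributes to $X \cdot Y$. I would also remark that this shows $\odot$ is the ``extend bilinearly, then truncate'' version of the single-coset rule $T_{w_i} \odot T_{w_j} = \sum_{k \in K_{ij}} T_{w_k}$ of the Corollary, so that the distributivity of~$\cdot$ over~$\bigcup$ on~$\B(G,B)$ offers an alternative, purely formal route to the same conclusion.
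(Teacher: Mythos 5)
Your proof is correct and follows the same route as the paper, which simply declares the details obvious after the preceding Lemma and Corollary: you supply exactly those details, extending the Lemma's support computation from single double cosets to unions and correctly identifying the nonnegativity of the intersection numbers (hence the absence of cancellation) as the one point that makes the $\min(1,\cdot)$ truncation faithful. Nothing further is needed.
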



Assuming as we do that~$G$ is finite, any nonempty subset of~$G$ which is stable under multiplication is a subgroup. In this situation, a subgroup of~$G$ containing~$B$ is just a nonempty~$X \in \B(G, B)$ such that~$X \cdot X = X$. Hence we may state:

\begin{coro}
The subgroups of~$G$ containing~$B$ are in bijection with the nonzero~$X \in \A^{01}(\Gamma )$ such that~$X \odot X = X$.
\end{coro}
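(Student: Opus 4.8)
The plan is to reduce the statement entirely to the bijection of the preceding proposition, combined with the characterization of subgroups already flagged in the text. First I would record the elementary fact, valid because $G$ is finite, that a nonempty subset $X \subseteq G$ with $X \cdot X \subseteq X$ is automatically a subgroup: for a fixed $x \in X$, left multiplication by $x$ is an injection $X \to X$, hence a bijection by finiteness, so $xX = X$; choosing $e \in X$ with $xe = x$ gives $e = 1 \in X$, and solving $xy = 1$ inside $X$ produces $x^{-1} \in X$. Thus the mere monoid condition forces the group axioms.

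Next I would translate ``subgroup of $G$ containing $B$'' into a condition living purely inside $\B(G,B)$. If $H$ is such a subgroup, then $B \subseteq H$ and $H$ is multiplicatively closed, so $BhB \subseteq H$ for every $h \in H$; hence $H$ is a union of double cosets, i.e.\ $H \in \B(G,B)$, and $H$ is nonempty with $H \cdot H = H$ (the inclusion $\subseteq$ is closure, and $\supseteq$ holds since $1 \in H$). Conversely, if $X \in \B(G,B)$ is nonempty with $X \cdot X = X$, the finiteness fact above makes $X$ a subgroup; and since $1 \in X$ while $X$ is stable under left multiplication by $B$, we get $B = B \cdot 1 \subseteq X$. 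This yields the clean equivalence, already anticipated in the paragraph before the corollary: the subgroups of $G$ containing $B$ are exactly the nonempty $X \in \B(G,B)$ with $X \cdot X = X$.

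Finally I would invoke the proposition. The bijection there, say $\Phi \colon \B(G,B) \to \A^{01}(\Gamma )$, carries multiplication to $\odot$, so the identity $X \cdot X = X$ corresponds exactly to $\Phi(X) \odot \Phi(X) = \Phi(X)$. Moreover the empty union (the only degenerate member of $\B(G,B)$) maps to the zero element of $\A^{01}(\Gamma )$, so ``nonempty'' corresponds to ``nonzero''. Restricting $\Phi$ to the subsets satisfying both conditions therefore produces the desired bijection between subgroups of $G$ containing $B$ and the nonzero $X \in \A^{01}(\Gamma )$ with $X \odot X = X$.

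I do not expect a genuine obstacle: the whole content rests in the already-established proposition, and the only care needed is bookkeeping---verifying both inclusions in $X \cdot X = X$, deducing $B \subseteq X$ from $1 \in X$ together with $B$-stability, and matching ``nonempty'' with ``nonzero'' across $\Phi$. If any single point merits a careful sentence it is the passage from the monoid identity $X \cdot X = X$ to the full subgroup structure, but this is exactly the standard finite-cancellation argument sketched above.
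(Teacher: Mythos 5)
Your proposal is correct and follows the same route as the paper, which disposes of the corollary in the paragraph immediately preceding it: the finite-cancellation fact that a nonempty multiplicatively closed subset of a finite group is a subgroup, the observation that such subgroups containing~$B$ are exactly the nonempty~$X \in \B(G,B)$ with~$X \cdot X = X$, and then the bijection of the preceding proposition. You have merely written out the bookkeeping (both inclusions in~$X\cdot X = X$, the deduction~$B \subseteq X$ from~$1 \in X$ and~$B$-stability, and the matching of nonempty with nonzero) that the paper leaves implicit.
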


For example, the identity corresponds to~$B$, and the sum of all the elements in~$\W$ corresponds to~$G$. Let us now turn to a more involved example.

\subsection{The Petersen graph}

\topnextpage{\imageright{.5}{\begin{figname}Petersen's graph.\end{figname}}{peter-mono.pdf}}

We develop \cref{ex-peter-intro}. Much of the heavy lifting was done by a computer, so you should not expect the details of intermediate calculations.

So we consider the edge-coloured graph~$\Gamma $ obtained from the Petersen graph~$\Pi$, itself drawn on the next page, with the general procedure given  in the Introduction for line spaces. The group~$G= S_5$ acts on~$\Pi$, which can be seen most clearly by noting that~$\Pi$ is isomorphic to the graph whose vertices are the unordered pairs from~$\{ 1, 2, 3, 4, 5 \}$, and whose edges are placed between disjoint pairs. In fact~$G$ is the automorphism group of~$\Pi$, although we will not use this. It follows that~$G$ acts on~$\Gamma $, and again it turns out that~$G= \Aut(\Gamma )$.

Once the vertices of~$\Pi$ have been numbered from~$1$ to~$10$, we can label the vertices of~$\Gamma $ with ordered pairs~$(i,j)$ with~$1 \le i, j \le 10$. We pick~$(1,2)$ as our base point, and we let~$B$ be the stabilizer of~$(1,2)$ in~$G$. In the sequel, $(1,2)$ plays the rôle of the vertex named~$\bar 1$ elsewhere in the paper.

We can then ask GAP to compute the double cosets of~$B$ in~$G$. There are~$11$ of them, and GAP even provides representatives~$w_0, \ldots, w_{10}$ (we will not display them here). Of course they appear in a random order, and we will pretend to be lucky later when the elements~$T_{w_i}$ will come out in exactly the most convenient order, when this was really done in hindsight. Write~$W= \{ w_0, \ldots, w_{10} \}$.

Next we ask the computer to determine the adjacency matrices~$T_1$ and~$T_2$. The algebra which they generate, we learn, has dimension~$11$. Since~$\dim \End_G(V) = |B \bs G /B| = 11 = \dim \A(\Gamma )$, we deduce that~$\End_G(V) = \A(\Gamma )$, or in other words, the action is strongly transitive. (Vertex-transitivity is obvious.)

We can work out a presentation for~$\A(\Gamma )$. The relations 
\[ (T_1 - I)(T_1+I) = 0 \, , \quad (T_2 - 2I)(T_2+I) = 0  \]
are expected from \cref{ex-complete-graph}. Having guessed what the standard basis (extracted from the family of monomials~$T_1T_2T_1T_2 \cdots $, not involving any squares) should be, we ask the computer for confirmation, and we learn that the matrices
\[  I, \, T_1, \, T_2, \, T_2T_1, \, T_1T_2, \, T_1T_2T_1, \, T_2T_1T_2, \, (T_1T_2)^2, \, (T_2T_1)^2, \, (T_1T_2)^2T_1, \, T_2(T_1T_2)^2   \]
are linearly independent, so they form indeed our basis. Then we make the computer express~$(T_1T_2)^3$ in this basis, discovering that
\[ (T_1 T_2)^3= (T_2T_1)^2(I + T_2) - T_1T_2T_1T_2T_1  \, .   \]
Taking transposes, we deduce that
\[ (T_2T_1)^3 = (I+T_2)(T_1T_2)^2 - T_1T_2T_1T_2T_1   \, .  \]

Now, we see that the displayed relations form a presentation for~$\A(\Gamma )$. Indeed, any algebra generated by~$T_1$ and~$T_2$ satisfying these relations must have dimension~$\le 11$, since the monomials above are a generating family. Having found one example of algebra of dimension exactly~$11$ where the relations hold, we see that it must be isomorphic to the universal algebra defined thus by generators and relations.

Using this, we can work out the~$1$-dimensional representations of~$\A(\Gamma )$. Under a homomorphism~$\A(\Gamma ) \longrightarrow \C$, the generator~$T_1$ must be sent to~$\pm 1$, and~$T_2$ must be sent to either~$2$ or~$-1$. In each of the four cases, we only have to check whether the remaing relations hold. We find that~$(-1, 2)$ is an impossible combination, but the other three lead to well-defined representations. The one corresponding to the choice~$(1,2)$ is the representation~$C$ mentioned in the proof of \cref{prop-criterion-same-V} below.

Since~$\A(\Gamma )$ is semisimple (\cref{prop-adj-algebra-is-semisimple}), and so must be isomorphic to a product of matrix algebras, we see by trying to write~$11 = 1+1+1+$ a sum of squares of integers~$>1$ that 
\[ \A(\Gamma ) \cong M_2(\C) \times M_2(\C) \times \C \times \C \times \C \, .   \]
The considerations of \S\ref{subsec-double-cosets} apply, with~$\A(\Gamma )$ rather than~$\End_G(V)$, and we discover that~$V(\Gamma )$, as a~$G$-module, involves five different irreducible representations, two of them with multiplicity~$2$, and the remaining three with multiplicity~$1$.

\topnextpage{\imageright{1}{\begin{figname} \label{fig-astuce}
Optimizing the search for intermediate subgroups.
\end{figname}}{astuce.pdf}}

We need to compute the Coxeter basis. We know that we do have an architecture, so from (2) of \cref{prop-arch-basic-props}, it is enough to find, for each~$w \in W$, a matrix~$T_w \in \A(\Gamma )$ such that 
\[ \bar 1 \cdot T_w = \sum_{\bar y : y \in BwB} \bar y \, .   \]
We can compute the right hand side with ease at this point. On the other hand, the fact that~$T_w \in \A(\Gamma )$ will be expressed by writing this matrix as a linear combination of the 11 matrices in the standard basis. Assuming we number the vertices starting from~$\bar 1$, the vector~$\bar 1 \cdot T_w$ is the first row of~$T_w$. We solve a linear system, which the theory predicts has a unique solution, and we are done. In the end we find: 
\[ T_{w_0} = I, \, T_{w_1} =T_1, \, T_{w_2} =T_2, \, T_{w_3} =T_2T_1, \, T_{w_4} =T_1T_2, T_{w_5} =\, T_1T_2T_1, \, T_{w_6} =T_2T_1T_2, \, T_{w_7} =(T_1T_2)^2, \,   \]
\[ T_{w_8} =(T_2T_1)^2, \quad \, T_{w_9} =(T_1T_2)^2T_1, \, \quad T_{w_{10}} =T_2(T_1T_2)^2 -  (T_1T_2)^2T_1 \, .   \]
%

The values of~$\delta $ are indicated on the front page. The vertex~$x$ bearing the label~$I$ has been selected, and then each vertex~$y$ bears the label~$\delta (y, x)$. For example, there are four vertices with label~$T_2 T_1 T_2$. These are precisely the vertices at the end of a gallery of type~$(2,1,2)$ starting from~$x$, and they form a~$B$-orbit. Similarly for the other labels. Vertices with the label $T_2(T_1T_2)^2 -  (T_1T_2)^2T_1$ are at the end of a gallery of type $(2,1,2,1,2)$ from~$x$, but they are not at the end of a gallery of type $(1,2,1,2,1)$.

Let us determine all the subgroups of~$G$ containing~$B$. We start with a brute force approach, which is enough to give a complete answer in a matter of seconds. We compute once and for all the various products~$T_{w_i} \odot T_{w_j}$ and store the results, so for example from 
\[ T_{w_8} T_{w_9} = 4 T_{w_1} + 2 T_{w_3} + T_{w_7} \, ,   \]
we have 
\[ T_{w_8} \odot T_{w_9} =  T_{w_1} +  T_{w_3} + T_{w_7} \, .   \]
Then we go through the~$2^{11} -1= 2047$ non-zero elements~$X \in \A^{01}(\Gamma )$, and check whether $X \odot X = X$. We find exactly 6 such elements, so there are 6 groups between~$B$ and~$G$. For instance, $T_{w_0} + T_{w_5}$ is one such element, and the corresponding group is thus $B \cup Bw_5B = \langle B,w_5 \rangle$. The complete list is: 
\[ B,  \quad G_1 = \langle B,w_1 \rangle, \quad  G_2= \langle B,w_2 \rangle, \quad G_5 = \langle B,w_5 \rangle, \quad G_{1, 10} = \langle B, w_1, w_{10} \rangle, \quad G \, .\]

Since each element~$X$ describes for us the decomposition of the corresponding group as a union of double cosets, the inclusions between our six groups are readily worked out. The poset looks like this.
\[ \xymatrix{
  & G \ar@{-}[ld] \ar@{-}[dd] \ar@{-}[rdd] \\
  G_{1,10} \ar@{-}[d]  &     &    \\
  G_1 \ar@{-}[dr]   &  G_2 \ar@{-}[d] & G_5 \ar@{-}[ld] \\
                & B    &      
}
\]

Of course, one may argue that~$S_5$ has only 156 subgroups, and that it may seem easier to go through all of them using GAP and check which of them contain~$B$. We want to argue that the method above would scale well to much larger examples, however. To give a very first idea of how one could optimize the search for intermediate subgroups, we draw a directed graph on~$\{ 0, 1, \ldots, 10 \}$ with an arrow from~$i$ to~$j$ whenever we know the following fact: if~$X \in \A^{01}(\Gamma )$ involves~$T_{w_i}$ and satisfies~$X \odot X = X$, then it must also involve~$T_{w_j}$. For example, we have 
\[ T_{w_9} \odot T_{w_9} = T_{w_0} +  T_{w_5} + T_{w_6} +  T_{w_{10}} \, , \]
so we can place arrows from~$9$ to each of~$0, 5, 6, 10$. On \cref{fig-astuce} we have placed all the arrows obtained from looking at the squares~$T_{w_i} \odot T_{w_i}$, as well as one arrow between~$9$ and~$3$, because there is already an arrow between~$9$ and~$6$, and 
\[ T_{w_9} \odot T_{w_6} = T_{w_3} + T_{w_6} + T_{w_8} + T_{w_9} \, .   \]
Contemplating this figure, we see that if~$X$ involves~$T_{w_i}$ where~$i \in \{ 3,4,6,7,8,9 \}$, and satisfies $X \odot X = X$, then~$X$ must be the sum of all the element of the Coxeter basis (corresponding to the subgroup~$G$). We have reduced the search for nontrivial intermediate subgroups to subsets of~$\{ 0,1,2,5,10 \}$ rather than~$\{ 0,1, \ldots, 10 \}$. Exploiting the graph further (adding more edges), one concludes rapidly. In fact, once the products $T_{w_i} \odot T_{w_j}$ have been computed, the search can be done (and has been done) by hand, with no extra information.

\section{Buildings} \label{sec-buildings}

In this section, the edge-coloured graphs we encounter are initially allowed to be infinite (though we allow only finitely many colours). The basic definitions above still make sense. Quite rapidly, we focus on graphs which are regular for some orders (see Definition~\ref{def-chamb-system-regular}), and this implies that they are locally finite. The last theorem assumes finiteness again.

\subsection{Preliminaries}

We consider buildings as particular edge-coloured graphs (for which vertices are very often called chambers, and paths are very often called galleries, but in this paper we continue to consider these terms as synonymous). Relying on the equivalence of categories given in \cite[Theorem 1.3.1]{charlot}, it is easy to translate any argument or definition given in terms of labelled simplicial complexes, as is the alternative, into the language of edge-coloured graphs. Even so, there are many possible definitions of buildings available, each with its own merits. Here we will have to recall two definitions, rather than just one: the first has inspired the idea of architectures, and the second is needed for the original notion of strong transitivity (and is perhaps more familiar).

We will need a Coxeter system~$(W, S)$ for the discussion, so~$W$ is a group and~$S= \{ s_i : i \in \I \}$ is a set of generating involutions for~$W$, indexed by~$\I$. The order of~$s_i s_j$ will be denoted by~$m_{ij}$; the relations~$(s_is_j)^{m_{ij}}= 1$ constitute a presentation for~$W$, by definition of a Coxeter system. Whenever $f= (i_1, \ldots, i_k)$ is a word on the alphabet~$\I$ -- for example~$f$ might be the type of a gallery in a graph~$\Gamma $ which is coloured by~$\I$ -- we write~$r_f = s_{i_1} \cdots s_{i_k} \in W$. Here and elsewhere the notation follows \cite{weiss} rather closely.

Also useful for our discussion is the Cayley graph~$\cayley(W,S)$, whose vertices are the elements of~$W$, with an edge of colour~$i$ between~$v$ and~$w$ if and only if~$w= s_i v$ (or alternatively~$v= s_i w$). We note that, whenever~$\gamma $ is a gallery in~$\cayley(W, S)$, it is entirely determined by its starting point~$x$ and its type~$f$; indeed if~$f= (i_1, \ldots, i_k)$, then the chambers visited are~$x$, $x s_{i_1}$, $x s_{i_1} s_{i_2}$, $\ldots$, $x r_f$. It follows that an automorphism~$\phi $ of~$\cayley(W, S)$ fixing a vertex~$x$ must be the identity (as it fixes all the galleries starting at~$x$, and~$\cayley(W,S)$ is connected). On the other hand, if~$x, y$ are chambers, then multiplication on the left by~$y x^{-1} \in W$ is an automorphism of the Cayley graph taking~$x$ to~$y$. In the end, the automorphism group of~$\cayley(W,S)$ is identified with~$W$ itself. We say that a type~$f$ is {\em reduced} when any gallery of type~$f$ in~$\cayley(W,S)$ is minimal (that is, realizes the combinatorial distance in~$\cayley(W,S)$ between its endpoints); clearly this needs to be checked only on a single gallery of type~$f$, for the others are obtained by applying automorphisms. Note that, in what follows, we may consider galleries in arbitrary graphs coloured by~$\I$ and ask whether their types are reduced.

Now suppose the edge-coloured graph~$\Gamma $ is a chamber system (Definition~\ref{def-chamb-system-regular}). A first definition is:

\begin{defn} \label{def-bu1}
We say that the chamber system~$\Gamma $ coloured by~$\I$ is a {\em building} with associated Coxeter system~$(W,S)$ when it is endowed with a map 
\[ \delta \colon \Vert(\Gamma ) \times \Vert(\Gamma ) \longrightarrow W  \]
with the following property: for any {\em reduced} type~$f$, and for chambers~$x, y$, we have~$\delta (x, y) = r_f$ if and only if there is a gallery~$\gamma $ in~$\Gamma $, leading from~$x$ to~$y$, whose type is~$f$.

\end{defn}

To formulate the second definition, recall that an induced subgraph of~$\Gamma $ is the edge-coloured graph obtained by selecting a subset of vertices and keeping the edges between them which are present in~$\Gamma $. An {\em apartment} in~$\Gamma $ (of type~$(W,S)$) is an induced subgraph isomorphic to~$\cayley(W,S)$. We may state:

\begin{defn} \label{def-bu2}
 We say that the chamber system~$\Gamma $ coloured by~$\I$ is a {\em building} with associated Coxeter system~$(W,S)$ when: \begin{enumerate}
\item for any two chambers of~$\Gamma $, there is an apartment containing both;
  \item for any chamber~$x$ and apartment~$A$ containing~$x$, there is a ``folding'' map~$\rho_{x, A} \colon \Gamma \longrightarrow A$ which is the identity on~$A$. We also ask that, whenever~$A'$ is another apartment containing~$x$, the restriction of~$\rho_{x, A} $ to~$A'$ an isomorphism (that is, it is the unique isomorphism~$A' \longrightarrow A$ fixing~$x$).
\end{enumerate}

\end{defn}

Here it is meant that~$\rho = \rho _{x, A}$ is a homomorphism of edge-coloured graphs, so if~$x \sim_i y$ in~$\Gamma $, we have either~$\rho (x) \sim_i \rho (y)$ or~$\rho (x) = \rho (y)$.

\begin{lem}
The two definitions of building are equivalent.
\end{lem}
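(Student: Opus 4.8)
The plan is to prove the two definitions are equivalent by establishing both implications, using the Cayley graph $\cayley(W,S)$ as the common reference object. The key technical tool in both directions will be the folding maps (retractions) and the fact, already recorded in the excerpt, that an automorphism of $\cayley(W,S)$ fixing a vertex is the identity, and that galleries in $\cayley(W,S)$ are determined by starting point and type.

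First I would show that \cref{def-bu2} implies \cref{def-bu1}. Given the apartment system and foldings, I would define $\delta(x,y)$ as follows: choose an apartment $A$ containing both $x$ and $y$ (possible by axiom (1)), fix an isomorphism $A \cong \cayley(W,S)$ sending $x$ to the identity $1 \in W$, and set $\delta(x,y)$ to be the image of $y$ under this isomorphism. The main work is to check this is well-defined (independent of the choice of $A$ and of the isomorphism); this is where the second part of axiom (2) is crucial, since it pins down the unique isomorphism $A' \to A$ fixing $x$, forcing compatibility between different apartments through a common chamber. Once $\delta$ is well-defined, I would verify the characterization in terms of reduced types: a reduced gallery of type $f$ from $x$ to $y$ lies (after folding, or because reduced galleries are minimal) in an apartment and witnesses $\delta(x,y) = r_f$, while conversely $\delta(x,y) = r_f$ for reduced $f$ produces such a gallery inside a chosen apartment by transporting the corresponding minimal gallery from $\cayley(W,S)$.

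For the reverse implication, \cref{def-bu1} implies \cref{def-bu2}, I would start from the $W$-valued distance $\delta$ and manufacture apartments. The natural candidate for an apartment is the image of an isometric embedding $\cayley(W,S) \hookrightarrow \Gamma$, i.e. a set of chambers $\{x_w : w \in W\}$ with $\delta(x_v, x_w) = v^{-1}w$ for all $v,w$; axiom (1) of \cref{def-bu2} then becomes the statement that any two chambers lie in such an embedded copy, which one builds by extending galleries using the reduced-type characterization. For the foldings in axiom (2), I would construct $\rho_{x,A}$ by sending a chamber $z$ to the element of $A$ whose $\delta$-distance from $x$ equals $\delta(x,z)$; verifying this is a well-defined graph homomorphism that restricts to an isomorphism on any other apartment through $x$ is again governed by the behaviour of $\delta$ under reduced galleries, together with the homogeneity of $\cayley(W,S)$.

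The hard part will be the well-definedness and compatibility statements surrounding the foldings, rather than any single computation. In the $(2)\Rightarrow(1)$ direction the subtlety is that $\delta$ must be shown independent of all apartment choices, which genuinely uses the uniqueness clause of the folding axiom; in the $(1)\Rightarrow(2)$ direction the subtlety is constructing enough apartments and showing the retractions behave correctly on galleries of non-reduced type (where a reduced expression can still be recovered by the deletion/exchange condition in the Coxeter group $W$). Since this equivalence is classical and follows \cite{weiss} closely, I would expect the author to cite or lean heavily on standard building theory for the more tedious verifications, and my plan mirrors that: set up the dictionary carefully, reduce everything to statements about $\cayley(W,S)$ and reduced types, and invoke the rigidity of the Cayley graph to close the gaps.
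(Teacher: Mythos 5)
Your plan for the direction (\cref{def-bu2} $\Rightarrow$ \cref{def-bu1}) is essentially the paper's own argument: define $\delta(x,y)$ via the unique isomorphism $\cayley(W,S)\to A$ sending $1$ to $x$, use the uniqueness clause of the folding axiom for well-definedness, and then verify the reduced-type characterization. For the converse, the paper simply cites Weiss (Corollary 8.6 and Propositions 8.17--8.19), in line with your stated expectation; your sketch of building apartments as isometric images of $\cayley(W,S)$ and defining retractions via $\delta$ is the standard route those cited results take. One concrete caution: in your parenthetical ``(after folding, or because reduced galleries are minimal)'', the second justification is circular at that stage --- minimality of galleries of reduced type in $\Gamma$ (the paper's \cref{prop-weiss-galleries}) is a property of buildings in the sense of \cref{def-bu1}, which is exactly what you are in the middle of establishing. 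The correct branch is the first one, and it is not free: the paper proves, by a minimal-counterexample induction on length that exploits the already-established well-definedness of $\delta$, that $\rho_{x,A}$ contracts no edge of a gallery of reduced type; that claim is the real content of the verification and should be spelled out rather than waved through.
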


\begin{proof}
  In this proof, $\Gamma $ is a chamber system coloured by~$\I$. If~$\Gamma $ satisfies \cref{def-bu1}, then from Corollary 8.6 and Propositions 8.17, 8.18 and 8.19 from \cite{weiss}, it also satisfies \cref{def-bu2}.

  Now assume that~$\Gamma $ satisfies \cref{def-bu2}, and let~$x,y$ be chambers. Select an apartment~$A$ containing both, and let~$\phi \colon \cayley(W,S) \longrightarrow A$ be an isomorphism. We may as well assume that~$\phi= \phi_{x, A} $ is the unique isomorphism taking~$1 \in W$ to~$x$ (by precomposing with an automorphism if necessary). Let~$w \in W$ be the vertex~$\phi^{-1}(y)$, and put~$\delta (x,y) = w$. Before we even check that this is well-defined, we note that for any type~$f$, reduced or not, such that~$w= r_f$, we can consider the unique gallery~$\gamma $ of type~$f$ in~$\cayley(W,S)$ leading from~$1$ to~$w$, and then~$\phi (\gamma )$ is a gallery of type~$f$ from~$x$ to~$y$ in~$\Gamma $.

  To see that this~$w$ is well-defined, suppose we had chosen another apartment~$A'$ containing~$x$ and~$y$. The unique isomorphism~$\alpha \colon A' \longrightarrow A$ sending~$x$ to itself must be the restriction of~$\rho_{x, A}$. The latter is the identity on~$A$, so~$\alpha (y) = y$. As the composition~$\alpha \circ \phi_{x, A'}$ must be~$\phi_{x, A}$ by uniqueness, we draw~$\phi_{x, A}^{-1}(y) = \phi_{x, A'}^{-1}(y)$. So~$w$ is well-defined. 

  We claim that, whenever~$\gamma $ is a gallery of reduced type~$f$ between~$x$ and~$y$ in~$\Gamma $, and~$A$ is an apartment containing~$x$, the gallery~$\rho_{x, A}(\gamma )$ also has type~$f$ (or in simpler terms, the folding~$\rho_{x, A}$ does not contract any edge of~$\gamma $). This is certainly true of galleries of length~$1$, for~$\rho_{x, A}^{-1}(x) = \{ x \}$. Now suppose~$x, y, \gamma $ and~$f$ constitute a counter-example to the claim with the length of~$\gamma $ minimal. Suppose~$\gamma = (x, x_1, \ldots, x_{k-1}, y)$, with type~$f= (i_1, \ldots, i_k)$.  Since there exists an apartment~$A$ containing~$x$ with~$\rho_{x, A}(\gamma )$ not having type~$f$, then this holds for {\em any} such apartment, and we may as well assume that~$y \in A$. By minimality of~$k$, the gallery~$(x, x_1, \ldots, x_{k-1})$ is mapped by~$\rho_{x, A}$ to a gallery  of type~$f'= (i_1, \ldots, i_{k-1})$ in~$A$; and since~$\phi_{x, A}(y) = y$, we draw~$\phi_{x, A}(x_{k-1}) = \phi_{x, A}(y)$. As a result, we have~$\delta (x, y) = r_{f'} = s_{i_1} \cdots s_{i_{k-1}}$. Interchanging the roles of~$x$ and~$y$, and of~$x_1$ and~$x_{k-1}$, we draw similarly~$\delta (x, y) = s_{i_2} \cdots s_{i_k}$. As~$\delta $ is well defined, we have 
\[ s_{i_1} \cdots s_{i_{k-1}} = s_{i_2} \cdots s_{i_k} \, ,   \]
so that~$r_f = s_{i_1}s_{i_2} \cdots s_{i_k} = s_{i_2} \cdots s_{i_{k-1}}$ (using that~$s_{i_1}^2 = 1$). Expressing~$r_f$ as a product of~$k-2$ generators is absurd, however, as~$f$ is a reduced word of length~$k$. We have proved the claim.

In particular, suppose there exists a gallery~$\gamma $ of reduced type~$f$ from~$x$ to~$y$ in~$\Gamma $; then, by the claim just established, we may assume that~$\gamma $ lies entirely in an apartment~$A$ containing both~$x$ and~$y$. It is then clear that~$\delta (x, y) = r_f$. We have finally established the properties required for \cref{def-bu1}.
\end{proof}

Now we feel free to quote results about buildings from any of the usual sources. The following improves our observations about galleries of reduced type, in the above proof.

\begin{prop} \label{prop-weiss-galleries}
Let~$\Gamma $ be a building, and let~$\gamma $ be a gallery of type~$f$ between~$x$ and~$y$. Then~$\gamma $ is minimal if and only if~$f$ is reduced. Moreover, if~$\gamma $ is minimal, then it is the unique gallery of type~$f$ between~$x$ and~$y$. If~$A$ is an apartment containing~$x$ and~$y$, and if~$\gamma $ is minimal, then~$\gamma $ lies entirely in~$A$.
\end{prop}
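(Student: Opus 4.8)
The plan is to prove the three assertions in sequence, leaning on the folding maps from \cref{def-bu2} and on facts about the Cayley graph established in the preliminaries. First I would address the equivalence between minimality of~$\gamma $ and reducedness of~$f$. The ``reduced implies minimal'' direction I would extract directly from the claim proved in the previous lemma: if~$f$ is reduced, then a gallery~$\gamma $ of type~$f$ from~$x$ to~$y$ can be pushed by a folding~$\rho_{x, A}$ into an apartment~$A$ without contracting any edge, so~$\rho_{x, A}(\gamma )$ is a gallery of type~$f$ inside~$A \cong \cayley(W,S)$; since reduced types give minimal galleries in the Cayley graph (by definition of reduced), and~$\rho_{x, A}$ cannot increase length, $\gamma $ must itself be minimal. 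For the converse, ``minimal implies reduced,'' I would argue contrapositively: if~$f$ is not reduced, then~$r_f$ admits an expression as a shorter word~$f'$, and the defining property of~$\delta $ in \cref{def-bu1} (which we may now invoke, the two definitions being equivalent) produces a gallery of the strictly shorter type~$f'$ between the same endpoints~$x$ and~$y$, contradicting minimality of~$\gamma $.

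Next I would prove uniqueness of a minimal gallery of given type. Here the key is that a folding~$\rho_{x, A}$, restricted to any apartment~$A'$ through~$x$, is an isomorphism onto~$A$, and that galleries in~$\cayley(W,S)$ are uniquely determined by their starting point and type. Concretely, given a minimal~$\gamma $ of type~$f$ from~$x$ to~$y$, choose an apartment~$A$ containing~$x$ and~$y$; by the non-contraction property just used, $\rho_{x, A}(\gamma )$ is a gallery of type~$f$ in~$A$ from~$x$ to~$y$, hence equals~$\phi_{x, A}$ applied to the unique such gallery in~$\cayley(W,S)$. If~$\gamma '$ were a second minimal gallery of type~$f$ from~$x$ to~$y$, then~$\rho_{x, A}(\gamma ') $ would be the same gallery in~$A$; I would then argue that~$\rho_{x, A}$ is injective on a minimal gallery, so~$\gamma ' = \gamma $. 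The cleanest way to see the injectivity is that each edge of~$\gamma '$ survives under~$\rho_{x, A}$ (no contraction, by the claim), and the images already coincide with~$\gamma $ vertex by vertex, forcing~$\gamma ' = \gamma $.

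Finally, the ``lies entirely in~$A$'' statement follows from the same machinery: for an apartment~$A$ containing both~$x$ and~$y$, the folding~$\rho_{x, A}$ fixes~$A$ pointwise and sends the minimal~$\gamma $ to a type-$f$ gallery in~$A$ from~$x$ to~$y$; but by the uniqueness just proved, $\gamma $ and its image~$\rho_{x, A}(\gamma )$ are both minimal galleries of type~$f$ between the same endpoints, hence coincide, which means~$\gamma $ already sits inside~$A$.

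I expect the main obstacle to be the uniqueness step, specifically verifying that~$\rho_{x, A}$ is genuinely injective along a minimal gallery rather than merely non-contracting on single edges. The claim from the previous lemma guarantees no individual edge collapses, but ruling out the possibility that two distinct vertices of~$\gamma $ map to the same vertex of~$A$ requires care; the honest route is to observe that a type-$f$ gallery in the apartment is rigidly determined, so any folding image that has the full type~$f$ must be the one genuine such gallery, and then to transport this rigidity back to~$\Gamma $. Everything else is a fairly direct assembly of the defining properties of~$\delta $ and of foldings, together with the already-established behaviour of~$\cayley(W,S)$.
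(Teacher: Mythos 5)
The paper does not actually prove this proposition: it settles all three assertions by citation to Weiss (Proposition 7.7 (ii), (iii) and Corollary 8.9), so you are attempting something strictly harder than the text does. Your ``reduced implies minimal'' direction is sound: the non-contraction claim from the preceding lemma shows that $\rho_{x,A}(\gamma )$ has type~$f$ in~$A \cong \cayley(W,S)$, hence realizes the distance~$k$ there, and since a folding can only shorten galleries, no gallery in~$\Gamma $ from~$x$ to~$y$ can be shorter than~$k$. But the converse has a genuine gap: you invoke \cref{def-bu1} to produce a shorter gallery from a gallery of non-reduced type~$f$, which presupposes~$\delta (x,y) = r_f$. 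The defining property of~$\delta $ asserts this equivalence only for \emph{reduced} types, and it genuinely fails otherwise: a gallery $x \sim_i z \sim_i y$ with~$x \neq y$ has type~$(i,i)$ and~$r_f = 1$, yet~$\delta (x,y) = s_i \neq 1$. Showing that a gallery of non-reduced type can be shortened requires an actual induction on the length using the building axioms (this is the content of the cited Proposition 7.7), not a one-line appeal to the definition.

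The uniqueness step has a second gap, which you partly acknowledge. From $\rho_{x,A}(\gamma ') = \rho_{x,A}(\gamma )$ you cannot conclude $\gamma ' = \gamma $ without knowing that~$\rho_{x,A}$ is injective on the union of the two galleries; non-contraction of individual edges does not give this, since distinct vertices occupying the same position in~$\gamma $ and~$\gamma '$ could still share an image. Your proposed escape --- that the minimal gallery already lies in~$A$, where~$\rho_{x,A}$ is the identity --- is exactly the third assertion, which you then derive \emph{from} the uniqueness statement, so as written the two steps are circular. To break the circle one must prove directly that a minimal gallery lies in every apartment containing its endpoints (say by induction along the gallery, using that~$\rho_{x,A}$ preserves the $W$-distance from~$x$), and only then read off uniqueness from the rigidity of galleries in~$\cayley(W,S)$. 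Given that the paper disposes of all three statements by citation, the economical fix is to do the same; a self-contained proof is possible but needs these two missing arguments supplied.
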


\begin{proof}
We quote from \cite{weiss}. The first statement is (ii) of Proposition 7.7, and the second is (iii) of the same proposition. The last statement is obvious at this point (using our observation in the previous proof), and it is also Corollary 8.9 in {\em loc.\ cit.}
\end{proof}

\begin{ex} \label{ex-generalised-digon}
  If~$\Gamma $ is a complete graph (on a single colour), then it is a building with Coxeter group~$S_2 = C_2 = \{ 1, s \}$, with~$\delta (x, x)= 1$ and~$\delta (x, y)= s$ when~$x \ne y$. The apartments are pairs of vertices, with the edge between them, and the folding~$\rho_{x, A}$, when~$\Vert(A)= \{ x, x' \}$, is defined by~$\rho_{x, A}(x)= x$ and~$\rho_{x, A}(y) = x'$ when~$y \ne x$.


  \topnextpage{\imageright{.5}{\begin{figname} \label{fig-digon}
A building with Coxeter group~$C_2\times C_2$
\end{figname}}{buildingA1A1.pdf}}
  
The product of two buildings is again a building. For example, taking a complete graph on three vertices, with edges of colour~$1$, and a complete graph on two vertices, with edges of colour~$2$, we obtain the graph pictured on \cref{fig-digon}. The Coxeter group here is~$W= C_2\times C_2 = \langle s_1, s_2 : s_1^2 = s_2^2= (s_1s_2)^2 = 1 \rangle$, and the apartments are squares with edges of alternating colours. The folding maps are easy to imagine.

Buildings obtained as the product of two complete graphs are called {\em generalised digons} and they play a special role in the theory (namely, the role of the least interesting chamber systems on two colours). 
\end{ex}

\subsection{Iwahori-Hecke algebras}

We fix a Coxeter system~$(W, S)$ as above.

\begin{defn}
  For each~$i \in \I$, let~$q_i$ be a complex number. The {\em Iwahori-Hecke algebra } associated to $(W,S, (q_i)_{i \in \I})$ is the algebra $\IH(W,S, (q_i)_{i \in \I})$ generated over~$\C$  by elements~$T_i$ for~$i \in \I$, subject to the following relations, for~$i,j \in \I$:
\begin{equation} \label{eq-hecke1}
(T_i - q_i)(T_i+1) = 0 \, ,
\end{equation}
\begin{equation} \label{eq-hecke2}
T_i T_j T_i \cdots  = T_j T_i T_j \cdots
\end{equation}
with~$m_{ij}$ terms on either side.
\end{defn}

\begin{prop} \label{prop-iwahori-hecke}
In $\IH(W,S, (q_i)_{i \in \I})$, we put $T_f = T_{i_1} T_{i_2} \cdots T_{i_k}$ when~$f= (i_1, \ldots, i_k)$ is a type. With this notation, when~$f$ is reduced, the element~$T_f$ depends only on~$r_f \in W$, and we may call it~$T_{r_f}$. Moreover, the various elements~$T_w$ thus obtained, for~$w \in W$, form a generating family for~$\IH(W,S, (q_i)_{i \in \I})$.
\end{prop}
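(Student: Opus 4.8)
The plan is to reduce the well-definedness statement to the word problem for Coxeter groups, namely to Matsumoto's theorem (also attributed to Tits): if $f$ and $f'$ are two reduced words representing the same element $w \in W$, then one passes from $f$ to $f'$ by a finite sequence of \emph{braid substitutions}, each replacing a subword $s_i s_j s_i \cdots$ (with $m_{ij}$ letters) by $s_j s_i s_j \cdots$ (again $m_{ij}$ letters). The crucial feature is that, between two \emph{reduced} words, one never needs the relations $s_i^2 = 1$; only the braid relations intervene. I would invoke this as a known fact about $(W,S)$, referring to standard sources (Bourbaki, or the treatment of reduced words in \cite{weiss}).

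First I would observe that the braid relations among the $s_i$ are mirrored exactly by the defining relations \eqref{eq-hecke2} of the Iwahori-Hecke algebra. So suppose $f = (i_1, \ldots, i_k)$ and $f' = (j_1, \ldots, j_k)$ are reduced words with $r_f = r_{f'}$. By Matsumoto's theorem the passage from $f$ to $f'$ decomposes into braid substitutions; applying any one of them to the monomial $T_{i_1} \cdots T_{i_k}$ amounts to replacing a factor $T_i T_j T_i \cdots$ by $T_j T_i T_j \cdots$ with the same number of terms, which leaves the product unchanged in view of \eqref{eq-hecke2}. Chaining these, I get $T_f = T_{f'}$, so that $T_f$ depends only on $w = r_f$; we are then entitled to write $T_{r_f}$, or $T_w$. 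I would stress that this argument uses only \eqref{eq-hecke2} and never the quadratic relation \eqref{eq-hecke1}.

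For the generation claim, I would simply note that for each $i \in \I$ the one-letter word $(i)$ is reduced, since $\ell(s_i) = 1$ (the generators are nontrivial involutions), and $r_{(i)} = s_i$; hence $T_{s_i} = T_i$. As the $T_i$ generate $\IH(W,S,(q_i)_{i \in \I})$ by definition, the family $\{ T_w : w \in W \}$, which contains all the $T_i$, is \emph{a fortiori} a generating family.

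The only nontrivial input, and therefore the main obstacle, is Matsumoto's theorem itself, which I would quote rather than reprove; everything else is formal. I would also add a cautionary remark: the proposition deliberately asserts only a \emph{generating} family, not a basis. Linear independence of the $T_w$ is a genuinely deeper fact — established, for instance, via the standard multiplication rules $T_{s_i} T_w = T_{s_i w}$ when $\ell(s_i w) > \ell(w)$ and $T_{s_i} T_w = q_i T_{s_i w} + (q_i - 1) T_w$ otherwise, together with a faithful module — and is not needed at this stage.
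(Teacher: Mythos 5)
Your treatment of the well-definedness of $T_{r_f}$ is exactly the paper's: both arguments invoke the Tits--Matsumoto theorem (the paper phrases it as the ``elementary homotopies'' of \cite{weiss}) to pass between reduced words representing the same $w$ using only the braid relations \eqref{eq-hecke2}. That half is fine.

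The gap is in the generation claim, and it comes from a misreading of what is asserted. ``Generating family'' here means a \emph{spanning set of the underlying vector space}, not a set of algebra generators: this is forced by the way the proposition is used later, where \cref{thm-adjacency-building-is-hecke} upgrades the $T_w$ to a \emph{basis} by checking only linear independence. Your observation that the family $\{T_w : w \in W\}$ contains the $T_i$ and hence generates the algebra is true but proves nothing in the intended direction --- one must show that every monomial $T_f = T_{i_1}\cdots T_{i_k}$ for an \emph{arbitrary}, possibly non-reduced, type $f$ lies in the linear span of $\{T_w : w \in W\}$. The paper does this by induction on the length of $f$: if $f$ is not reduced, then (again by the combinatorics of words in Coxeter groups, Proposition 4.3 of \cite{weiss}) $f$ is homotopic under braid moves to a word containing a repetition $(i,i)$; the braid moves leave $T_f$ unchanged by \eqref{eq-hecke2}, and the repetition is then eliminated via the quadratic relation \eqref{eq-hecke1}, which rewrites $T_i^2$ as $(q_i-1)T_i + q_i$ and expresses $T_f$ in terms of strictly shorter monomials. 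So your parenthetical boast that the quadratic relation is never needed is precisely the symptom of the missing step: it is indispensable for the spanning argument. (Your closing caution that linear independence is a separate and deeper matter is correct, and consistent with the paper's \cref{rmk-keep-parkinson-quiet}.)
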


\begin{proof}
  When two reduced words~$f$ and~$g$ satisfy~$r_f = r_g$, then \cite{weiss} says that we can obtain~$g$ from~$f$ by a series of ``elementary homotopies'', which consist precisely in replacing a sequence~$p(i,j)= (i,j, i, j, \ldots, )$ (of length~$m_{ij}$) by~$p(j,i)$. So~$T_f = T_g$ in the algebra.

  Moreover, when~$f$ is not reduced, then it is homotopic (in the above sense) to a type~$g$ involving a repetition~$(i,i)$ for some~$i$: indeed in~\cite{weiss} this is taken as the definition of ``reduced'', while Proposition 4.3 in {\em loc.\ cit.\ } gives the equivalence with our definition.  Replacing~$T_i^2$ by~$(q_i - 1) T_1 + q_i$, we can express~$T_g=T_f$ as a sum of monomials of the form~$T_{f'}$ with~$f'$ shorter than~$f$. An obvious induction allows us to conclude.
\end{proof}

\begin{rmk} \label{rmk-keep-parkinson-quiet}
We caution that the proposition does not claim that the elements~$T_w$ form a basis for the algebra in question. One can show that this holds when the numbers~$(q_i)_{i \in \I}$ have the property that~$q_i = q_j$ whenever~$s_i$ and~$s_j$ are conjugate elements of~$W$ (for lack of a reference stating exactly this fact, we point out~\cite[\S7.1]{hump},~\cite[\S6.1]{gar}, from which it can be deduced without too much work). Moreover, when the~$q_i$'s are the orders in a regular building~$\Gamma $, which is the case of chief interest for us, one can show that this condition is automatically fullfilled: see~\cite{parkinson} . Our approach does not rely on these results, however, and this is why our proof of the next theorem is a little different than that proposed by Parkinson in~\cite{parkinson}. 
\end{rmk}


\begin{thm} \label{thm-adjacency-building-is-hecke}
  Let~$\Gamma $ be building with associated Coxeter system~$(W,S)$, and suppose that~$\Gamma $ is regular with orders~$(q_i)_{i \in \I}$. Then the adjacency algebra~$\A(\Gamma )$ and the Iwahori-Hecke algebra $\IH(W,S, (q_i)_{i \in \I})$ are isomorphic (as algebras with distinguished generators~$(T_i)_{i \in \I}$). Moreover, the elements~$T_w$ for~$w \in W$ form a basis for either algebra.
\end{thm}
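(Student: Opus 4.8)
The plan is to build a surjective algebra homomorphism $\Phi \colon \IH(W,S,(q_i)_{i\in\I}) \longrightarrow \A(\Gamma)$ sending each Hecke generator~$T_i$ to the adjacency operator~$T_i$, and then to upgrade it to an isomorphism by a dimension count which simultaneously yields the basis statement. For~$\Phi$ to exist I must verify that the adjacency operators satisfy the two families of defining relations. The quadratic relation~\eqref{eq-hecke1}, namely $(T_i - q_i)(T_i+1)=0$, is immediate: since~$\Gamma$ is a chamber system regular with orders~$(q_i)$, deleting every edge of colour~$\ne i$ leaves a disjoint union of complete graphs on~$q_i+1$ vertices, so the computation of~\cref{ex-complete-graph} applies verbatim.

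The braid relation~\eqref{eq-hecke2} is the substantive point, and I would prove it through the building distance~$\delta$. Fix a vertex~$x$ and let~$p(i,j)=(i,j,i,\ldots)$ have~$m_{ij}$ terms. By~\cref{lem-action-monomial}, $x\cdot(T_iT_jT_i\cdots)$ equals the sum of the endpoints of all galleries of type~$p(i,j)$ issuing from~$x$. Because $r_{p(i,j)}=s_is_js_i\cdots$ is the longest element of the dihedral group~$\langle s_i,s_j\rangle$, which has length exactly~$m_{ij}$, the word~$p(i,j)$ is reduced; \cref{prop-weiss-galleries} then tells me that each such gallery is minimal, is the \emph{unique} gallery of its type joining its endpoints, and that these endpoints~$y$ are precisely those with~$\delta(x,y)=r_{p(i,j)}$. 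Hence $x\cdot(T_iT_jT_i\cdots)=\sum_{\delta(x,y)=w} y$, where~$w$ is that longest element. Running the identical computation for~$p(j,i)$ produces the same~$w$, so the two monomials act identically on every vertex and therefore coincide. This gives the well-defined homomorphism~$\Phi$, which is surjective since the~$T_i$ generate~$\A(\Gamma)$.

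For injectivity I would argue by dimensions. By~\cref{prop-iwahori-hecke} the elements~$T_w$, $w\in W$, span~$\IH(W,S,(q_i))$, so $\dim\IH\le|W|$ (and~$W$ is finite because a finite~$\Gamma$ cannot contain a copy of an infinite~$\cayley(W,S)$ as an apartment). Conversely, repeating the reduced-word computation for an arbitrary reduced~$f$ with~$r_f=w$ gives $x\cdot\Phi(T_w)=\sum_{\delta(x,y)=w} y$, consistent with the well-definedness in~\cref{prop-iwahori-hecke}. As~$w$ ranges over~$W$ the spheres~$\{y:\delta(x,y)=w\}$ are pairwise disjoint, and each is nonempty since inside any apartment through~$x$, identified with~$\cayley(W,S)$ sending~$1\mapsto x$, the vertex labelled~$w$ realizes it. Thus the vectors~$x\cdot\Phi(T_w)$ have pairwise disjoint nonempty supports, are linearly independent, and force the operators~$\Phi(T_w)$ to be linearly independent, so $\dim\A(\Gamma)\ge|W|$.

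Assembling these, $|W|\le\dim\A(\Gamma)=\dim\Phi(\IH)\le\dim\IH\le|W|$, whence all quantities equal~$|W|$, the map~$\Phi$ is an isomorphism of algebras-with-distinguished-generators, and~$\{T_w:w\in W\}$ is a basis of each side. The step demanding the most care is the braid relation: everything rests on recognising that~$p(i,j)$ is reduced and on the uniqueness of minimal galleries from~\cref{prop-weiss-galleries}, which is what converts the raw gallery-count of~\cref{lem-action-monomial} into a clean statement about~$\delta$-spheres. Once the disjoint-support observation is in place, the remaining dimension count is routine.
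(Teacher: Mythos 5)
Your proposal is correct and follows essentially the same route as the paper: verify the quadratic relations via \cref{ex-complete-graph}, establish the braid relations by showing that for a reduced type~$f$ the monomial~$T_f$ acts on a vertex~$x$ as the sum over the sphere~$\{y : \delta(x,y)=r_f\}$ (using \cref{lem-action-monomial} together with the uniqueness of minimal galleries from \cref{prop-weiss-galleries}), and then conclude by the disjoint-supports linear-independence argument combined with the spanning statement of \cref{prop-iwahori-hecke}. Your explicit remarks that the spheres are nonempty and that~$W$ is finite are small refinements the paper leaves implicit, but the argument is the same.
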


\begin{proof}
  Let~$T_i \in \A(\Gamma )$ denote the adjacency operator for the colour~$i$, and write~$T_f= T_{i_1} \cdots T_{i_k}$ whenever $f= (i_1, \ldots, i_k)$ is a type. We claim that, when~$f$ is reduced and~$x$ is a vertex of~$\Gamma $, we have 
\[ x \cdot T_f(x) = \sum_{y:\delta (x,y) = r_f} \, y \, .  \tag{*}  \]
Indeed, from \cref{lem-action-monomial}, to evaluate~$x \cdot T_f(x)$ we must inspect the galleries of type~$f$ starting from~$x$; however, by \cref{prop-weiss-galleries}, such a gallery is the unique one of its type between its endpoints. Hence the identity (*), keeping in mind that a gallery of type~$f$ exists between~$x$ and~$y$ if and only if~$\delta (x, y) = r_f$, by definition of a building.

It follows now blatently that~$T_f$ depends only on~$r_f$, when~$f$ is reduced. In the sequel we may call it~$T_{r_f}$. 
It particular, \cref{eq-hecke2} holds in~$\A(\Gamma )$. (It is a classical fact about Coxeter groups that~$(i,j,i,j, \ldots)$ with~$m_{ij}$ terms is a reduced type.) Of course \cref{eq-hecke1} also holds, as we know from \cref{ex-complete-graph} and the following discussion. Thus we have a homomorphism~$\IH(W,S, (q_i)_{i \in \I}) \longrightarrow \A(\Gamma )$, mapping~$T_i$ to~$T_i$, which must be surjective. Note that the elements called~$T_w$ for~$w \in W$ in \cref{prop-iwahori-hecke} map to the elements with the same name in~$\A(\Gamma )$.

To show that this homomorphism is an isomorphism, and at the same time that the elements~$T_w$ for~$w \in W$ form a basis, it remains to prove that they are linearly independent in~$\A(\Gamma )$. This is, however, obvious: fixing a vertex~$x$, we see that the vectors~$x \cdot T_w$, by the relation (*), involve disjoint sets of vertices as~$w$ varies.
\end{proof}

\begin{coro} \label{coro-buildings-have-architecture}
$\Gamma $ has a canonical architecture.
\end{coro}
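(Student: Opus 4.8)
The plan is to transport the building's own Weyl-distance function into the adjacency algebra. Write $\delta \colon \Vert(\Gamma) \times \Vert(\Gamma) \to W$ for the $W$-valued distance of \cref{def-bu1}, and recall from \cref{thm-adjacency-building-is-hecke} that $W$ embeds into $\A(\Gamma)$ via the injection $w \mapsto T_w$, whose image $\W = \{T_w : w \in W\}$ is a basis. Composing these, I define the candidate architecture by $\delta(x,y) := T_{w}$, where $w$ is the Weyl distance from $x$ to $y$ (abusing notation to call the $\A(\Gamma)$-valued map $\delta$ again, exactly as in the Introduction). This is manifestly canonical, since it involves no auxiliary choices beyond the defining data of the building. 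It then remains only to verify (Ar1) and (Ar2).

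Axiom (Ar2) comes essentially for free from work already done. Fix a vertex $x$ and an element $w \in W$, and pick any reduced type $f$ with $r_f = w$. The relation~(*) established in the proof of \cref{thm-adjacency-building-is-hecke} reads $x \cdot T_w = \sum_{y \,:\, \delta(x,y) = w} y$, where the sum is over vertices at Weyl distance $w$ from $x$. Since $w \mapsto T_w$ is injective, the condition $\delta(x,y) = w$ is the same as the condition that the architecture value $T_{\delta(x,y)}$ equals $T_w$, so this identity is precisely (Ar2) for the basis element $T = T_w$.

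For (Ar1) I must show that, as $y$ ranges over $\Vert(\Gamma)$, the set of values $\{T_{\delta(x,y)} : y \in \Vert(\Gamma)\}$ equals $\W$ independently of $x$. The inclusion into $\W$ is immediate. For the reverse inclusion, fix $x$ and $w \in W$, and choose (by \cref{def-bu2}(1)) an apartment $A$ containing $x$; since $A \cong \cayley(W,S)$ via an isomorphism carrying $x$ to the identity of $W$, there is a vertex $y \in A$ whose Weyl distance from $x$ is $w$, so $T_w$ is attained. Hence $\{T_{\delta(x,y)} : y\} = \W$ for every $x$; this is independent of $x$ and is a basis of $\A(\Gamma)$ by \cref{thm-adjacency-building-is-hecke}, which is (Ar1).

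The only genuine content is the surjectivity invoked in (Ar1)—that every element of $W$ occurs as a Weyl distance out of a fixed chamber—which is exactly what the apartment axiom (equivalently, the existence of a gallery of each reduced type starting at $x$) provides; combined with the injectivity of $w \mapsto T_w$ and the linear independence of the $T_w$ from \cref{thm-adjacency-building-is-hecke}, everything reduces to the single identity~(*). I therefore expect no serious obstacle, the substantive work having been absorbed into the Hecke-algebra theorem. Finally, this architecture agrees with the one anticipated in the Introduction, precisely because its Coxeter basis is $\W$ and its underlying $W$-valued map is the building's defining distance.
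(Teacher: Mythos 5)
Your proposal is correct and follows essentially the same route as the paper: define $\bar\delta(x,y)=T_w$ where $w$ is the building's Weyl distance, get (Ar2) from the identity~(*) in the proof of \cref{thm-adjacency-building-is-hecke}, and get (Ar1) from the basis statement of that theorem. Your extra verification that every $w\in W$ is actually attained as a Weyl distance from any fixed chamber (via apartments) is a detail the paper dismisses as trivial, but it is a legitimate and welcome addition.
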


\begin{proof}
Put~$\bar \delta (x, y) = T_w$ when~$\delta (x, y) = w \in W$. We need to check that~$\bar \delta $ is an architecture. Axiom (Ar1) is trivial, given the theorem; axiom (Ar2) is the identity (*) observed during the proof.
\end{proof}

\subsection{Strongly transitive actions}

\begin{thm} \label{thm-strong-actions-equiv-conditions}
  Let~$\Gamma $ be a finite building with associated Coxeter system~$(W,S)$, and suppose that~$\Gamma $ is regular with orders~$(q_i)_{i \in \I}$. Assume that~$G$ is a finite group acting on~$\Gamma $. Then the three conditions below are equivalent:

  \begin{enumerate}
  \item $G$ acts transitively on the pairs~$(x, A)$ where~$x$ is a chamber and~$A$ is an apartment containing~$x$,
  \item $G$ acts vertex-transitively and~$\End_G(V) = \A(\Gamma )$,
    \item $G$ acts vertex-transitively, and given an arbitrary vertex~$x_0$, the $\stab_G(x_0)$-orbits on~$\Vert(\Gamma )$ are exactly the spheres centered at~$x_0$, for the canonical architecture~$\delta $.
\end{enumerate}


\end{thm}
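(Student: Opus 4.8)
The plan is to establish the three implications separately, using the canonical architecture $\delta$ from \cref{coro-buildings-have-architecture} as the central bookkeeping device throughout. The key translation that makes everything work is that, by \cref{thm-strong-trans-implies-arch} together with the characterization of the canonical architecture, the $\stab_G(x_0)$-orbits on $\Vert(\Gamma)$ refine the spheres $\{y : \delta(x_0,y) = w\}$, and strong transitivity (i.e.\ condition (2)) is precisely the assertion that this refinement is an equality. Since $G$ always acts by automorphisms of edge-coloured graphs, $\delta$ is $G$-invariant by (4) of \cref{prop-arch-basic-props}, so each $\stab_G(x_0)$-orbit is contained in a single sphere; the whole game is about whether distinct orbits can share a sphere.

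First I would prove $(2) \Longleftrightarrow (3)$, which I expect to be essentially formal. Fix $x_0$ with stabilizer $B$; then $\dim \End_G(V) = |B \backslash G / B|$ is the number of $B$-orbits on $\Vert(\Gamma) = G/B$, while $\dim \A(\Gamma) = |W|$ is the number of spheres centered at $x_0$ by \cref{thm-adjacency-building-is-hecke}. Since each $B$-orbit lies in a sphere and every sphere is nonempty (because $G$ acts vertex-transitively, so every $w \in W$ is realized as some $\delta(x_0,y)$), we have $\dim \A(\Gamma) \le \dim \End_G(V)$, with equality exactly when each sphere is a single $B$-orbit. Because $\A(\Gamma) \subseteq \End_G(V)$ always, equality of dimensions is equivalent to $\A(\Gamma) = \End_G(V)$, which is condition (2); and the orbit-equals-sphere statement is condition (3). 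Thus $(2)$ and $(3)$ are two readings of the same equality, and the equivalence drops out.

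Next I would prove $(1) \Longrightarrow (2)$. Assuming transitivity on incident chamber-apartment pairs, vertex-transitivity is immediate. For the algebra equality it suffices, by the orbit count just described, to show each sphere $\{y : \delta(x_0,y)=w\}$ is a single $B$-orbit. Given $y, y'$ with $\delta(x_0,y) = \delta(x_0,y') = w$, I would choose a reduced type $f$ with $r_f = w$ and use \cref{prop-weiss-galleries}: there are unique minimal galleries $\gamma, \gamma'$ of type $f$ from $x_0$ to $y$ and to $y'$, each lying in an apartment $A, A'$ through $x_0$. By hypothesis some $g \in \stab_G(x_0)$ carries the pair $(x_0, A)$ to $(x_0, A')$; since $g$ preserves colours and fixes $x_0$, it maps $\gamma$ to a gallery of type $f$ from $x_0$ inside $A'$, which by uniqueness is $\gamma'$, whence $g\cdot y = y'$. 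So $y$ and $y'$ lie in the same $B$-orbit.

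The hard part, and the main obstacle, will be $(3) \Longrightarrow (1)$ in the spherical case, where the finiteness of $W$ is genuinely used. Here I would exploit the longest element $w_0 \in W$: in a spherical building, for each chamber $x_0$ the chambers $y$ with $\delta(x_0,y) = w_0$ are exactly the chambers \emph{opposite} to $x_0$, and each such $y$ determines a \emph{unique} apartment containing both $x_0$ and $y$. Condition (3) says $\stab_G(x_0)$ acts transitively on this sphere of opposite chambers, hence transitively on the apartments through $x_0$ (via the bijection apartment $\leftrightarrow$ opposite chamber). Combined with vertex-transitivity, which moves $x_0$ to any chamber, this yields transitivity on all incident pairs $(x, A)$, giving (1). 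The delicate points I would need to verify carefully are the standard building facts that opposite chambers biject with apartments through a fixed chamber, and that $\delta(x_0,y) = w_0$ characterizes oppositeness; these I would quote from \cite{weiss}, noting that sphericity is exactly what guarantees $w_0$ exists and that the opposition relation behaves this way.
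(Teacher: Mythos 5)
Your proposal is correct, and its overall architecture coincides with the paper's: the paper likewise proves $(1)\Rightarrow(3)\Rightarrow(2)$, then $(2)\Rightarrow(3)$, and finally $(3)\Rightarrow(1)$ via opposite chambers, with the whole proof organized around the inclusion of each $\stab_G(x_0)$-orbit in a sphere. Two execution details differ. For $(2)\Leftrightarrow(3)$ you use a pure counting argument (number of $B$-orbits $=\dim\End_G(V)$ versus number of spheres $=\dim\A(\Gamma)=|W|$, each orbit sitting inside a nonempty sphere), whereas the paper manipulates the operators directly, comparing $\bar 1\cdot\phi_{g^{-1}}=\sum_{x\in B\bar g}x$ with $\bar 1\cdot T_w=\sum_{\delta(\bar 1,x)=w}x$ and concluding $\phi_{g^{-1}}=T_w$; your version is cleaner in the finite case (which is the paper's standing assumption), while the paper's is the one that survives when $W$ and $B\backslash G/B$ are infinite, a case the statement implicitly allows since it singles out the spherical situation. (Minor quibble: the nonemptiness of the spheres is not really a consequence of vertex-transitivity, but of the $T_w$ being nonzero, or of the orders satisfying $q_i\ge 1$.) For $(3)\Rightarrow(1)$ you invoke the bijection between apartments through $x_0$ and chambers opposite $x_0$, which requires the standard fact that two opposite chambers of a spherical building lie in a \emph{unique} apartment; the paper avoids citing that fact and instead shows directly that the element $b\in B$ with $b(y_1)=y_2$ satisfies $b(A_1)\subseteq A_2$, using that every chamber of $A_1$ lies on a minimal gallery from $x_0$ to $y_1$ and that minimal galleries stay inside apartments, then concludes by finiteness. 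Both routes are sound; yours trades a slightly heavier citation for a shorter argument.
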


\begin{proof}
Throughout the proof we have a group~$G$ acting vertex-transitively, so we may identify~$\Vert(\Gamma )$ with~$G/B$, after choosing a vertex~$x_0$ and letting~$B$ denote its stabilizer. We make a general remark: let~$g \in G$, let~$w= \delta (\bar 1, \bar g) \in W$, then we have 
\[ B \bar g \subset \{ x \in \Vert(\Gamma ) : \delta (\bar 1, x) = w \} \, . \tag{$\dagger$}
\]
Indeed, the distance~$\delta $ is~$\Aut(\Gamma )$-invariant by construction, and~$B$ acts by automorphisms, fixing~$\bar 1$. Condition (3) expresses that ($\dagger$) is an equality (for all~$g\in G$).


So we start by assuming (1), and we show (3) first. Let~$x_1$ and~$x_2$ satisfy~$\delta (\bar 1, x_1) = \delta (\bar 1, x_2) = w$, and let~$A_i$ be an apartment through~$\bar 1$ and~$x_i$, for~$i= 1,2$. By assumption there is~$b \in B$ such that~$b(A_1) = A_2$. However, in~$A_2$ there is just one chamber at distance~$w$ from~$\bar 1$, namely~$x_2$, so~$b(x_1) = x_2$. This show that the right hand side of ($\dagger$) makes up one $B$-orbit, so that ($\dagger$) is an equality, and we have (3).

Next we show $(3) \Longrightarrow (2)$.  Indeed, in \S\ref{subsec-double-cosets} we have defined, for~$g \in G$, the operator~$\phi_g \in \End_G(V)$, and we have 
\[ \bar 1 \cdot \phi_{g^{-1}} = \sum_{x \in B\bar g} x = \sum_{x : \delta (\bar 1, x)= w} x = \bar 1 \cdot T_w  \, ,   \]
where~$w=  \delta (\bar 1, \bar g)$. This shows that~$\phi_{g^{-1}} = T_w\in \A(\Gamma )$, by~$G$-equivariance. The various operators~$\phi _g$, for~$g \in G$, generate~$\End_G(V)$ by \cref{prop-lux}, so~$\End_G(V) = \A(\Gamma )$. We have (2).

We turn to $(2) \Longrightarrow (3)$, which is rather similar.  In the notation above, it remains true that 
\[ \bar 1 \cdot \phi_{g^{-1}}   =   \sum_{x \in B\bar g} x   \]
and 
\[ \bar 1 \cdot T_w  = \sum_{x : \delta (\bar 1, x)= w} x \, .   \]
Given the inclusion ($\dagger$) when~$w= \delta (\bar 1, \bar g)$, and the fact that~$\phi_{g^{-1}}$ can be expressed as linear combination of the various~$T_v$ for~$v \in W$, it must be the case that ($\dagger$) is an equality (as above, it follows that~$\phi_{g^{-1}} = T_w$). Thus (3) holds.

Finally, we assume that (3) holds, and show (1).  So let~$A_1$ and~$A_2$ be two apartments containing~$\bar 1$, and let us show that there is~$b \in B$ such that~$b(A_1) = A_2$. Here it will be important to note that, the building being finite, it is certainly {\em spherical} (that is, $W$ is finite): in this case, in each apartment containing~$\bar 1$ there is a unique chamber~$y$ which is {\em opposite} to~$\bar 1$, in the sense that it maximizes the distance to~$\bar 1$ (computed within the apartment, or within~$\Gamma $: the two are the same by \cref{prop-weiss-galleries}). See \cite{weiss}. So let~$y_i \in A_i$ be the opposite of~$\bar 1$, for~$i= 1, 2$, and let us pick~$b \in B$ such that~$b(y_1) = y_2$: this~$b$ exists from ($\dagger$) and the fact that~$\delta (\bar 1, y_1) = \delta (\bar 1, y_2)$ (this common distance, as an element of~$W$, is the opposite of~$1 \in W$ in~$\cayley(W,S)$).

If~$x \in A_1$ is any chamber, then it lies on a minimal gallery~$\gamma $ in~$A_1$ from~$\bar 1$ to~$y_1$, by~\cite[Proposition 5.4]{weiss} . So~$b(\gamma )$ is a minimal gallery between~$\bar 1$ and~$y_2$, and it must lie entirely in~$A_2$ by \cref{prop-weiss-galleries}. In particular~$b(x) \in A_2$, so~$b(A_1) \subset A_2$, and since the apartments have the same finite number of chambers, we conclude that~$b(A_1) = A_2$.
\end{proof}

\begin{rmk} \label{rmk-two-archs-on-buildings}
Of course  when all three conditions above hold, the architecture obtained from \cref{coro-buildings-have-architecture} is the same as that obtained from \cref{thm-strong-trans-implies-arch}, from the uniqueness statement in this theorem. 
\end{rmk}

\subsection{Example: projective planes} \label{subsec-projective-planes}

\topnextpage{
\noindent \begin{minipage}{.5\textwidth}
\image{1}{P2_F2_col.pdf}
\end{minipage}~\begin{minipage}{.5\textwidth}
\image{1}{P2_F3_col.pdf}
\end{minipage}

\begin{figname} \label{fig-proj-planes}
The chamber systems of~$\P^2(\f_2)$ (left) and~$\P^2(\f_3)$ (right).
\end{figname}
}

The definition of a projective plane was given in the Introduction. \cref{fig-proj-planes} provides a couple of examples. To avoid trivialities, we take it as part of the definition that each point is incident with at least three lines, and each line is incident with at least three points. (See~\cite[Theorem I, 3.5]{ueber} for a proof that allowing ``three'' to be replaced by ``two'' would only add a couple of rather uninteresting projective planes to the list.) Under this assumption, it is easy to show that, given a finite projective plane, there is an integer~$q \ge 2$ called its {\em order}, such that each point is incident with~$q+1$ lines, and each line is incident with~$q+1$ points. In what follows, $\P =(P,L)$ is a finite projective plane, of order~$q$, and we study~$\Gamma = \chamber(\P)$, its chamber system. The edge-coloured graph~$\Gamma $ is thus regular with orders~$q_1= q_2 = q$. It has~$(q^2 + q+1)(q+1)$ vertices (there are~$q^2+q+1$ points, and so~$(q^2 +q+1)(q+1)$ pairs of incident points and lines).

Theorem 2.2.9 in~\cite{buek} shows that projective planes are the same thing as the so-called generalised 3-gons (or generalised triangles), while~\cite[\S7.14]{weiss} shows that generalised 3-gons are in one-to-one correspondence, via the chamber system construction, with buildings whose associated Coxeter group is~$W=S_3$ (of order~$6$), with its generators~$s_1 = (12)$ and~$s_2 =(23)$.  Thus our graph~$\Gamma $ is such a building.

\cref{thm-adjacency-building-is-hecke} applies, showing that~$\A(\Gamma )$ is the algebra generated by~$T_1$ and~$T_2$, with~$(T_i - q)(T_i+1) = 0$ for~$i=1, 2$, end with $T_1 T_2 T_1 = T_2 T_1 T_2$. It has dimension~$6=|W|$, with Coxeter basis $1, T_1, T_2, T_1T_2, T_2T_1, T_1T_2T_1$. If we start with the graph on the left of \cref{fig-proj-planes} and write down its adjacency operators~$T_1$ and~$T_2$, then we may compute the generic element 
\[ aI + bT_2 + cT_1 + dT_2T_1 + eT_1T_2 + fT_1T_2T_1  \]
and obtain the matrix displayed in the Introduction.

The easiest way to construct a projective plane of order~$q$, when~$q$ is a power of a prime, is of course to consider the~$1$-dimensional subspaces of~$\f_q^3$ as points, and the~$2$-dimensional subspaces as lines (identifying such a plane with the set of lines contained in it). We call this example the {\em Desarguesian} projective plane of order~$q$ (a very common notation is~$\P^2(\f_q)$). Now, the difficult result by Ostrom and Wagner \cite{twotrans} states that, if a group~$G$ acts on~$\P$, and if~$G$ is~$2$-transitive on the set of points, then~$\P$ is Desarguesian.

However, we note that:

\begin{lem} \label{lem-strong-2-trans}
Let the group~$G$ act on~$\P$, and so also on~$\Gamma $. If the action on~$\Gamma $ is strongly transitive, then the action on the set of points is 2-transitive. In fact, the action on the set of non-degenerate, ordered triangles is transitive.
\end{lem}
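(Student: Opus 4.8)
The plan is to exploit the fact, recalled just above, that $\Gamma = \chamber(\P)$ is a building whose Coxeter group is $W = S_3$; since $S_3$ is finite this building is spherical, so \cref{thm-strong-actions-equiv-conditions} applies in full and strong transitivity (condition~(2)) is equivalent to condition~(1), namely that $G$ be transitive on the pairs $(x, A)$ consisting of a chamber~$x$ and an apartment~$A$ containing~$x$. The whole lemma will then follow once I translate such pairs into ordered triangles, and translate triangle-transitivity into $2$-transitivity on points. So the two statements to prove are really one statement about apartments in disguise.

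First I would set up the dictionary between apartments of~$\Gamma$ and non-degenerate triangles of~$\P$. An apartment is an induced subgraph isomorphic to $\cayley(S_3, \{s_1, s_2\})$, which is a hexagon with alternating edge-colours. Given three non-collinear points $a, b, c$, with sides $\overline{ab}, \overline{bc}, \overline{ca}$, the six flags $(a, \overline{ab}), (b, \overline{ab}), (b, \overline{bc}), (c, \overline{bc}), (c, \overline{ca}), (a, \overline{ca})$ form exactly such a hexagon: consecutive flags share alternately a line (a colour-$1$ edge) and a point (a colour-$2$ edge), while any two non-consecutive ones share neither a point nor a line, so no further edges occur. I would check that triangle $\mapsto$ apartment is a $G$-equivariant bijection; this is precisely the classical identification of projective planes with generalised triangles already invoked above (Theorem 2.2.9 in~\cite{buek} and~\cite[\S7.14]{weiss}).

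Next I would refine this into a bijection between pairs (chamber, apartment through it) and \emph{ordered} non-degenerate triangles. Selecting one of the six flags of the apartment attached to $\{a,b,c\}$ amounts to choosing an ordering: the flag $(a, \overline{ab})$ singles out $a$ (its point), then $b$ (the other vertex of the triangle on its line $\overline{ab}$), and finally $c$ (the remaining vertex). Running through the six flags produces the six orderings of $\{a,b,c\}$, so $(x, A) \mapsto (a, b, c)$ is a $G$-equivariant bijection onto the set of ordered non-degenerate triangles. Hence transitivity on the pairs $(x,A)$ — equivalently, by \cref{thm-strong-actions-equiv-conditions}, strong transitivity — is the same as transitivity on ordered non-degenerate triangles, which is the second assertion.

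Finally I would deduce $2$-transitivity on points. Given ordered pairs of distinct points $(a,b)$ and $(a', b')$, I extend each to a non-degenerate triangle by choosing $c \notin \overline{ab}$ and $c' \notin \overline{a'b'}$; such points exist because a projective plane of order $q \ge 2$ has $q^2+q+1$ points, more than the $q+1$ on any line. Triangle-transitivity then supplies $g \in G$ with $g(a,b,c) = (a',b',c')$, whence $g(a) = a'$ and $g(b) = b'$. The hard part will be making the apartment/triangle dictionary precise and verifying its $G$-equivariance, i.e.\ the hexagon computation together with the induced-subgraph check; once that correspondence is in place, both conclusions are formal.
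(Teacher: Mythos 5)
Your proof is correct, but it takes a genuinely different route from the paper's. The paper works directly with the canonical architecture: it uses the implication $(2)\Rightarrow(3)$ of \cref{thm-strong-actions-equiv-conditions} (stabilizer orbits coincide with spheres), computes explicitly that the relevant pairs of flags are at distance $T_2T_1$ (for $2$-transitivity) and $T_1T_2T_1$ (for triangle-transitivity) from a well-chosen base flag, and concludes that an element of the stabilizer moves one to the other; this direction of the theorem holds for any building, so sphericity is never invoked. You instead use the implication $(2)\Rightarrow(1)$, which does require sphericity (harmless here, as $W=S_3$ is finite), and then build a $G$-equivariant bijection between pairs $(x,A)$ and ordered non-degenerate triangles. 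Your dictionary is sound: the hexagon computation works, the induced-subgraph check goes through because the three points and three lines of a non-degenerate triangle are pairwise distinct, and surjectivity of the correspondence follows since any induced alternating hexagon of flags forces its three points to be non-collinear (collinearity would collapse the three lines into one). What your approach buys is more than the lemma: it proves the \emph{equivalence} of strong transitivity with transitivity on ordered non-degenerate triangles, i.e.\ it also settles the converse that the paper's subsequent remark (attributed to Kantor in the Introduction) leaves as an exercise. What it costs is the reliance on the apartment characterization and on sphericity, plus the burden of verifying the bijection carefully; the paper's argument is more computational but more self-contained, staying entirely within the language of distances and galleries. One small point to nail down if you write this up: make explicit that the non-consecutive flags of your hexagon share neither a point nor a line precisely because the triangle is non-degenerate, so that the subgraph is genuinely induced, as the definition of an apartment requires.
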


\noindent (Of course, saying that we have an action of a group~$G$ on~$\P$ means that~$G$ acts on~$P$, preserving the set of lines.)

\begin{proof}
  Vertex-transitivity on~$\Gamma $ clearly implies that~$G$ is transitive on points. Let~$p$ be a point, and let~$H= \stab_G(p)$. We show 2-transitivity on points first, so we show that, given points~$p_1$ and~$p_2$ such that~$p, p_1, p_2$ are distinct, there exists~$h \in H$ such that~$h(p_1) = p_2$.

  Indeed, for~$i=1, 2$ let~$\ell_i$ be the line through~$p$ and~$p_i$, and let~$x_i = (p_i, \ell_i)$. Let~$\ell$ be a line through~$p$ which is distinct from~$\ell_1$ and~$\ell_2$, and let~$x= (p, \ell)$. Then~$\delta (x, x_i) = T_2 T_1$ (to go from~$x$ to~$x_i$, first cross an edge of colour~$2$ to get to~$(p, \ell_i)$, then an edge of colour~$1$ to reach~$(p_i, \ell_i) = x_i$). If~$B = \stab_G(x)$, it follows from \cref{thm-strong-actions-equiv-conditions} that there exists~$b \in B$ with~$b(x_1) = x_2$. In particular~$b(p_1) = p_2$, and as~$B \subset H$, we are done with 2-transitivity in points.

Suppose now that~$(p_1, p_2, p_3)$ and~$(p_1', p_2', p_3')$ are given non-degenerate triangles. From the first part, we may as well assume that $p_1 = p_1'$ and $p_2 = p_2'$, and we look for an element of~$G$ fixing~$p_1$ and~$p_2$ while taking~$p_3$ to~$p_3'$. Consider the flags $x= (p_1, (p_1p_2))$, $y= (p_3, (p_3p_2))$ and $y'= (p_3', (p_3'p_2))$. Then $\delta (x, y) = \delta (x, y') = T_1 T_2 T_1$ (equivalently, there exists a gallery of type~$(1,2,1)$ from~$x$ to either~$y$ or~$y'$). From \cref{thm-strong-actions-equiv-conditions} again, there exists~$b \in B = \stab_G(x)$ such that~$b(y) = y'$. This element~$b$ takes~$p_3$ to~$p_3'$, takes $(p_3p_2)$ to~$(p_3'p_2)$, fixes~$p_1$, and fixes the line~$(p_1p_2)$; it follows that it fixes~$p_2$, which is the intersection of~$(p_1p_2)$ and~$(p_3p_2)$.
\end{proof}

\begin{rmk}
As announced in the Introduction, one can show conversely that, if~$G$ acts transitively on non-degenerate triangles, then the action is strongly transitive. We leave this as an exercise.
\end{rmk}

Applying the Ostrom-Wagner theorem, we get:

\begin{coro}
If~$\Gamma $ is strongly transitive, then $\P$ is Desarguesian.
\end{coro}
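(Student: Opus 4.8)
The plan is to chain together the lemma just established, the definition of a strongly transitive graph, and the Ostrom-Wagner theorem; the proof is then almost immediate, so the real work is only in checking that the hypotheses line up.

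First I would unwind the hypothesis. By \cref{defn-strongly-transitive}, the assertion that $\Gamma $ is strongly transitive means precisely that some group acts strongly transitively on it, and that we may as well take~$G = \Aut(\Gamma )$, the group of (colour-preserving) automorphisms of the edge-coloured graph. To apply \cref{lem-strong-2-trans}, however, I need this~$G$ to act on~$\P$ itself, not merely on~$\Gamma $. This is the one point requiring care: I would invoke the near-equivalence between a line space and its chamber system (of the type recorded in \cite[Theorem 3.4.6]{buek}). Concretely, a colour-preserving automorphism of~$\Gamma = \chamber(\P)$ permutes the colour-$1$ edges and the colour-$2$ edges separately, so it respects the distinction between flags sharing a point and flags sharing a line; it therefore cannot interchange points with lines, and descends to an honest collineation of~$\P$. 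Hence~$G$ acts on~$\P$, and the induced action on~$\Gamma $ is exactly the strongly transitive one we started with.

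With~$G$ now acting on~$\P$ and strongly transitively on~$\Gamma $, \cref{lem-strong-2-trans} applies verbatim and yields that~$G$ acts $2$-transitively on the set of points of~$\P$. Finally, the Ostrom-Wagner theorem \cite{twotrans} asserts that a finite projective plane admitting a collineation group that is $2$-transitive on points must be Desarguesian, and the desired conclusion follows at once.

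The main (and essentially only) obstacle is the middle step: passing from an abstract strongly transitive action on the chamber system to a genuine action on~$\P$ by collineations, i.e.\ confirming that colour-preserving automorphisms of~$\Gamma $ correspond to automorphisms of the underlying line space and do not mix the two types of flags. Once that correspondence is in hand, the corollary is a direct combination of \cref{lem-strong-2-trans} with the cited theorem of Ostrom and Wagner, with no further computation needed.
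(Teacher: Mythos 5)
Your proposal is correct and matches the paper's own argument, which derives the corollary immediately from \cref{lem-strong-2-trans} together with the Ostrom--Wagner theorem. Your extra verification that a colour-preserving automorphism of~$\chamber(\P)$ descends to a collineation of~$\P$ (so that $\Aut(\Gamma)$ really does act on~$\P$) is a point the paper leaves implicit, and it is handled correctly.
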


We will provide a direct proof of the corollary, see \cref{thm-moufang-proj-desargues}.

Now we turn to the study of analogous results for affine planes. Note that affine planes are not buildings -- if nothing else, because the dimension of the adjacency algebra will be proved to be odd.

\section{Affine planes} \label{sec-affine}

\subsection{Definitions}

Again, we refer to the Introduction for the definition of an affine plane. See \cref{fig-affine-fano} below for a picture. From now on we assume that each point of an affine plane is incident with at least two lines, and that each line is incident with at least two points. Given a finite affine plane, one shows easily that there is an integer~$q \ge 2$, called its {\em order}, such that each point is incident with~$q+1$ lines, and each line is incident with~$q$ points. In what follows, we let~$\AP = (P,L)$ denote a finite affine plane of order~$q$, and we study its chamber system~$\Gamma  = \chamber(\AP)$. It is regular with orders~$q_1 = q-1$ and~$q_2= q$, and it has~$q^2(q+1)$ vertices.

Given an integer~$q$ which is a prime power, the {\em Desarguesian affine plane} of order~$q$ is obtained by considering the elements of~$\f_q^2$ as points, with the usual affine lines.

\subsection{Relative positions}

Let~$x= (p, \ell)$ and~$y= (p',\ell')$ be two flags of~$\AP$ (vertices of~$\Gamma $). We shall study their relative positions, identifying seven basic situations. In each case, we define an element~$\delta (x,y) \in \A(\Gamma )$. Of course, it is important to make sure that the seven possibilities are mutually exclusive, but this will be obvious.

The first trivial case is when~$x=y$; we put~$\delta (x,y) = I$ in this case. Next, if~$\ell=\ell'$ but~$p \ne p'$, that is when~$x \sim_1 y$, we put~$\delta (x,y) = T_1$. Similarly when~$x \sim_2 y$, which happens when~$p=p'$ but~$\ell \ne \ell'$, we put~$\delta (x,y) = T_2$.

The remaining cases are more interesting:

\begin{itemize}
\item Suppose~$p\ne p'$, $\ell \ne \ell'$, but~$p'$ is on the line~$\ell$. This happens if and only if there is a gallery of type~$(1,2)$ between~$x$ and~$y$, and this gallery is then unique. We put~$\delta (x, y) = T_1T_2$ and for short, we write~$T_{12} = T_1 T_2$.

\item Suppose~$p\ne p'$, $\ell \ne \ell'$, $p'$ is not on~$\ell$, and the lines~$\ell$ and~$\ell'$ intersect in~$p$. This happens if and only if there is a gallery of type~$(2,1)$ between~$x$ and~$y$, and this gallery is then unique. We put~$\delta (x, y) = T_2T_1$ and we write~$T_{21} = T_2 T_1$.

\item Suppose~$p\ne p'$, $\ell \ne \ell'$, $p'$ is not on~$\ell$, and the lines~$\ell$ and~$\ell'$ intersect in~$p'' \ne p$. This happens if and only if there is a gallery of type~$(1,2,1)$ between~$x$ and~$y$, and this gallery is then unique. We put~$\delta (x, y) = T_1T_2T_1$ and we write~$T_{121} = T_1T_2 T_1$. A supplementary remark is that in this case, there is also a gallery of type~$(2,1,2)$ between~$x$ and~$y$, but this does not characterize the relative position, as we see with the next and final case.

  \item Suppose~$p\ne p'$, $\ell \ne \ell'$, $p'$ is not on~$\ell$, and the lines~$\ell$ and~$\ell'$ are parallel. This happens if and only if there is a gallery of type~$(2,1,2)$, but no gallery of type~$(1,2,1)$, between~$x$ and~$y$. The gallery of type~$(2,1,2)$ is then unique. We put~$\delta (x, y) = T_2 T_1 T_2 - T_1 T_2 T_1$ and we write~$\ttot = T_2 T_1 T_2 - T_1 T_2 T_1$. (It seems that the notation~$T_{212}$ should be kept for~$T_2 T_1 T_2$.)
\end{itemize}

Now we put 
\[ \W= \big\{ I, T_1, T_2, T_{12}, T_{21}, T_{121}, \ttot \big\} \, .   \]
We point out that~$\W$ does not depend on the choice of particular vertices.

\begin{lem} \label{lem-almost-arch-aff}
\begin{enumerate}
\item For~$T \in \W$ and~$x$ an arbitrary vertex, we have
\[ x \cdot T = \sum_{y : \delta (x, y) = T} \, y \, .   \]
In other words, axiom (Ar2) holds.

\item Each of the seven situations actually occurs. In fact, for~$T \in \W$, and for an arbitrary vertex~$x$, we can find~$y$ such that~$\delta (x, y) = T$. As a result, the elements of~$\W$ are linearly independent.
\end{enumerate}
\end{lem}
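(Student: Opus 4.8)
The plan is to split \cref{lem-almost-arch-aff} along the two parts, and within part (1) to treat the seven elements of~$\W$ by increasing length, isolating~$\ttot$ as the one genuinely subtle case. The three shortest values $I, T_1, T_2$ give (Ar2) immediately from the definitions of~$\delta$ and of the adjacency operators. For the honest monomials $T_{12}=T_1T_2$, $T_{21}=T_2T_1$ and $T_{121}=T_1T_2T_1$, I would apply \cref{lem-action-monomial}, which writes $x\cdot T$ as $\sum_\gamma e(\gamma)$ over galleries~$\gamma$ of the relevant type starting at~$x$. For each type I would check two things: that the endpoint~$y$ of any such gallery occupies exactly the relative position assigned to that element of~$\W$, and that conversely each~$y$ in that position is reached by a \emph{unique} gallery of the type. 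Both are short verifications in the line space~$\AP$, using only that two distinct points lie on a unique common line and two distinct lines meet in at most one point; for type $(1,2,1)$, for instance, the intermediate line is forced to be the line joining the two extreme points, which simultaneously yields uniqueness and pins down the configuration (lines meeting at a point of~$\ell$ other than~$p$).

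The main obstacle is~$\ttot = T_2T_1T_2 - T_1T_2T_1$, which I would handle by computing $x\cdot T_2T_1T_2$ on its own and subtracting. Using \cref{lem-action-monomial} again, I would show that a gallery of type $(2,1,2)$ from~$x=(p,\ell)$ can never end in any of the five shorter positions — its endpoint $(p',\ell')$ always satisfies $p'\ne p$, $\ell'\ne\ell$, $p'\notin\ell$ and $\ell\cap\ell'\ne p$ — so it lands either in the $T_{121}$ position (lines meeting off~$\ell$) or in the $\ttot$ position (parallel lines). In both admissible cases the middle line is forced to be the line through~$p$ and~$p'$, so there is exactly one such gallery, giving
\[
x\cdot T_2T_1T_2 \;=\; \sum_{\delta(x,y)=T_{121}} y \;+\; \sum_{\delta(x,y)=\ttot} y \,.
\]
Since part (1) for the monomial $T_{121}$ already gives $x\cdot T_1T_2T_1 = \sum_{\delta(x,y)=T_{121}} y$, subtraction leaves precisely the $\ttot$-sphere, which is (Ar2) for~$\ttot$. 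I expect this accounting — determining exactly which positions a $(2,1,2)$-gallery reaches and with what multiplicity — to be where the work lies, as it is the point at which affine planes depart from buildings and where the minus sign in~$\ttot$ becomes indispensable.

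For part (2), I would exhibit, for an arbitrary flag $x=(p,\ell)$, a flag~$y$ realizing each position. The first six use only the non-triviality hypotheses (each line carries $q\ge 2$ points, each point lies on $q+1\ge 2$ lines): e.g.\ for~$T_{121}$ pick $p''\ne p$ on~$\ell$, a line $\ell'\ne\ell$ through~$p''$, and a point $p'\ne p''$ on~$\ell'$, which automatically falls off~$\ell$. For~$\ttot$ the affine parallel axiom is essential: choosing any point~$p'$ off~$\ell$ (such exist since $q^2>q$) and the unique line through~$p'$ parallel to~$\ell$ produces a flag in the $\ttot$ position. Linear independence then follows formally: by part (1) the vectors $x\cdot T$ for $T\in\W$ have pairwise disjoint supports, because the seven positions are mutually exclusive, and by the existence statement each such vector is nonzero; disjointly supported nonzero vectors are linearly independent, hence so are the operators~$T$ via the evaluation map $S\mapsto x\cdot S$.
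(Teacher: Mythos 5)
Your proof is correct and follows essentially the same route as the paper: the first six cases via \cref{lem-action-monomial} together with uniqueness of the relevant galleries, and $\ttot$ by rewriting the claim as $x\cdot T_2T_1T_2 = x\cdot T_1T_2T_1 + \sum_{\delta(x,y)=\ttot} y$ and counting $(2,1,2)$-galleries — the paper merely delegates that counting to the itemized discussion of relative positions preceding the lemma, whereas you spell it out, and part (2) (explicit configurations plus disjoint supports) is likewise the intended argument. One trivial slip in an aside: for type $(1,2,1)$ the forced intermediate datum is the point $\ell\cap\ell'$, not a line; the forced intermediate line $(pp')$ belongs to the type $(2,1,2)$ case.
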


\begin{proof}
The point (1) is obvious in the first six cases by an application of \cref{lem-action-monomial}. For~$\ttot$, we write the equation as 
\[ x \cdot T_2 T_1 T_2 = x \cdot T_{1} T_2 T_1 + \sum_{y : \delta (x, y) = \ttot} \, y \, .   \]
Now replace $x \cdot T_2 T_1 T_2$ and $x \cdot T_{1} T_2 T_1$ by the expression given in \cref{lem-action-monomial}, and the identity becomes obvious by the discussion above.

(2) follows from the fact that each line has at least two points, and each point is on at least two lines.
\end{proof}

\topnextpage{\imageright{.75}{\begin{figname} \label{fig-affine-fano}
      The chamber system of the Desarguesian affine plane of order~$2$.
\end{figname}
}{affine_fano.pdf}
}

We have almost proved that~$\delta $ is an architecture, with~$\W$ as the associated Coxeter basis. What is missing, of course, is a proof that~$\A(\Gamma )$ has dimension~$7$. We turn to this.

\subsection{Description of the incidence algebra}

\begin{defn}
  Let~$q$ be a complex number. We define~$\Aff(q)$ to be the algebra over~$\C$ generated by~$T_1$ and~$T_2$ subject to

  \begin{equation} \label{eq-aff-basic}
 (T_1 - (q-1)I)(T_1+I) = 0 \, , \quad (T_2 - qI)(T_2 + I) = 0 \, , 
\end{equation}
\begin{equation} \label{eq-aff-q}
(T_1T_2)^2 = (q-1)T_2T_1 + (q-1)T_2T_1T_2 - T_1T_2T_1 \, , 
\end{equation}
\begin{equation} \label{eq-aff-q-2}
(T_2T_1)^2 = (q-1)T_1T_2 + (q-1)T_2T_1T_2 - T_1T_2T_1 \, . 
\end{equation}

\end{defn}

\begin{prop} \label{prop-adj-for-affine-planes}
Let~$\Gamma $ be the chamber system of an affine plane of order~$q$. Then~$\A(\Gamma )$ is isomorphic to~$\Aff(q)$, and has dimension~$7$. The map~$\delta $ is an architecture on~$\Gamma $, with associated Coxeter basis~$\W$.
\end{prop}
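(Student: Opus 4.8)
The plan is to identify $\A(\Gamma)$ with $\Aff(q)$ by producing a surjection and then matching dimensions, after which the architecture statement follows from what is already established in \cref{lem-almost-arch-aff}.

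First I would check that the three defining relations of $\Aff(q)$ hold among the adjacency operators $T_1, T_2 \in \A(\Gamma)$. The quadratic relations \eqref{eq-aff-basic} are immediate from \cref{ex-complete-graph} together with the fact that $\Gamma$ is a chamber system regular with orders $q_1 = q-1$ and $q_2 = q$, so that $(T_i - q_i I)(T_i + I) = 0$. Relation \eqref{eq-aff-q-2} is obtained from \eqref{eq-aff-q} by transposition, since $T_1, T_2$ are symmetric and $((T_1 T_2)^2)^t = (T_2 T_1)^2$ while $T_2 T_1 T_2$ and $T_1 T_2 T_1$ are each fixed by transposition. So the crux is to establish \eqref{eq-aff-q} directly in $\A(\Gamma)$.

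To prove \eqref{eq-aff-q}, I would apply both sides to an arbitrary vertex $x = (p, \ell)$ and use \cref{lem-action-monomial}, which evaluates $x \cdot (T_1 T_2)^2$ as the sum, with multiplicity, of the endpoints $y = (b, n)$ of galleries of type $(1,2,1,2)$ starting from $x$. Fixing such an endpoint, a gallery is determined by an intermediate point $a \in \ell \setminus \{p\}$ and a line $m$, and the incidence constraints force $m$ to be the unique line through $a$ and $b$; they also force $b \notin \ell$ (otherwise there is no such gallery), and reduce the count to the number of $a \in \ell \setminus \{p\}$ with $a \notin n$. A short case analysis on the relative position $\delta(x,y)$ — using that two distinct points lie on a unique line, and the affine parallel axiom — gives multiplicity $q-1$ when $\delta(x,y) = T_{21}$ or $\delta(x,y) = \ttot$, multiplicity $q-2$ when $\delta(x,y) = T_{121}$, and $0$ otherwise. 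Rewriting the resulting sums via axiom (Ar2) from \cref{lem-almost-arch-aff}, this reads
\[ x \cdot (T_1 T_2)^2 = (q-1)\, x \cdot T_{21} + (q-2)\, x \cdot T_{121} + (q-1)\, x \cdot \ttot \, . \]
As this holds for every vertex $x$, and the vertices span $V$, substituting $T_{21} = T_2 T_1$, $T_{121} = T_1 T_2 T_1$ and $\ttot = T_2 T_1 T_2 - T_1 T_2 T_1$ and collecting terms yields exactly \eqref{eq-aff-q}. This gallery-counting step is where the real work lies, and I expect it to be the main obstacle, as it is the only point where the geometry of the affine plane genuinely enters.

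With all three relations verified, there is a surjective homomorphism $\Aff(q) \longrightarrow \A(\Gamma)$ sending each generator to the adjacency operator of the same name, so $\dim \A(\Gamma) \le \dim \Aff(q)$. It remains to bound dimensions from both sides. On the one hand, \cref{lem-almost-arch-aff} shows the seven elements of $\W$ are linearly independent; since their span coincides with that of $\{I, T_1, T_2, T_1 T_2, T_2 T_1, T_1 T_2 T_1, T_2 T_1 T_2\}$ (because $T_2 T_1 T_2 = \ttot + T_{121}$), we get $\dim \A(\Gamma) \ge 7$. On the other hand, a direct check using \eqref{eq-aff-basic} to eliminate $T_1^2$ and $T_2^2$, together with \eqref{eq-aff-q} and \eqref{eq-aff-q-2}, shows the span of these same seven monomials is closed under right multiplication by $T_1$ and $T_2$; being a right ideal containing $I$, it is all of $\Aff(q)$, so $\dim \Aff(q) \le 7$. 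The chain $7 \le \dim \A(\Gamma) \le \dim \Aff(q) \le 7$ forces equality throughout, so the surjection is an isomorphism and $\dim \A(\Gamma) = 7$. Finally, $\W$ is then a basis of $\A(\Gamma)$; since the seven relative positions are mutually exclusive and exhaustive and each occurs by \cref{lem-almost-arch-aff}(2), the set $\{\delta(x,y) : y \in \Vert(\Gamma)\}$ equals $\W$ independently of $x$, which is (Ar1), while (Ar2) is \cref{lem-almost-arch-aff}(1). Hence $\delta$ is an architecture with Coxeter basis $\W$.
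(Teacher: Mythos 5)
Your proposal is correct and follows essentially the same route as the paper: verify the quadratic relations via \cref{ex-complete-graph}, obtain \eqref{eq-aff-q-2} from \eqref{eq-aff-q} by transposition, prove \eqref{eq-aff-q} by a gallery/incidence count yielding the coefficients $q-1$, $q-1$, $q-2$ on $T_{21}$, $\ttot$, $T_{121}$, and then squeeze $7 \le \dim \A(\Gamma) \le \dim \Aff(q) \le 7$ using the linear independence from \cref{lem-almost-arch-aff} and the fact that the seven monomials span $\Aff(q)$. The only cosmetic difference is that you count galleries of type $(1,2,1,2)$ directly, whereas the paper organizes the same count as $(x \cdot T_{121}) \cdot T_2$ and classifies the colour-$2$ neighbours of the intermediate vertices; the resulting multiplicities are identical.
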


\begin{proof}
  First we show that there is a homomorphism~$\Aff(q) \longrightarrow \A(\Gamma )$ mapping~$T_i$ to~$T_i$, and for this we need to prove that the defining relations for~$\Aff(q)$ hold in~$\A(\Gamma )$. The first two are a consequence of \cref{ex-complete-graph}. We prove \cref{eq-aff-q}, and \cref{eq-aff-q-2} will follow by taking transposes.

  Using the notation above, we want to show that 
\[ (T_1T_2)^2 = (q-1) T_{21} + (q-1) \ttot + (q-2)T_{121} \, . \tag{*}  \]
We pick a vertex~$x$ and compare the effect of either side, when applied to~$x$. On the left hand side we have 
\[ x \cdot (T_1T_2)^2 = (x \cdot T_{121}) \cdot T_2 = \sum_{y : \delta (x,y) = T_{121}} \, T_2(y) = \sum_{y : \delta (x,y) = T_{121}} \sum_{z : z \sim_2 y } \, z\, ,   \]
using \cref{lem-almost-arch-aff}. If~$x= (p, \ell)$ and~$y$ is such that~$\delta (x, y) = T_{121}$, then~$y= (p', \ell')$ with~$p' \ne p$, $\ell' \ne \ell$, and the intersection of~$\ell$ and~$\ell'$ is~$p'' \ne p$. There are three possibilities for a vertex~$z$ with~$y \sim_2 z$, although we always have~$z= (p', \ell'')$ with~$\ell'' \ne \ell'$ :
\begin{itemize}
\item the line~$\ell''$ may be~$(pp')$, the line through~$p$ and~$p'$. In this case~$\delta (x, z) = T_{21}$. Conversely, if we start with~$z= (p', (pp'))$, the number of vertices~$y= (p', \ell')$ which satisfy at the same time~$\delta (x, y) = T_{121}$ and~$y \sim_2 z$ is~$q-1$: one is free to choose the line~$\ell'$ among the lines joining~$p'$ with a point of~$\ell$ different from~$p$.

\item the lines~$\ell$ and~$\ell''$ may be parallel. In this case~$\delta (x, z) = \ttot$. Starting from~$z=(p', \ell'')$ with~$\delta (x, z) = \ttot$, there are~$q-1$ vertices~$y= (p', \ell')$ which satisfy at the same time~$\delta (x, y) = T_{121}$ and~$y \sim_2 z$: again, one has the same choices for~$\ell'$ as in the previous case. (This time, the line~$\ell'$ must avoid~$p$ not because we must have~$\ell' \ne \ell''$, but because~$\delta (x, y) = T_{121}$.)

  \item the lines~$\ell$ and~$\ell''$ may intersect in a point~$r$ distinct from both~$p$ and~$p''$.  In this case~$\delta (x, z) = T_{121}$. Starting from~$z=(p', \ell'')$ with~$\delta (x, z) = T_{121}$, so that~$\ell''$ and~$\ell$ intersect at a point~$r \ne p$, we find this time~$q-2$ vertices~$y= (p', \ell')$ which satisfy at the same time~$\delta (x, y) = T_{121}$ and~$y \sim_2 z$: here~$\ell'$ may be any line through~$p'$, intersecting~$\ell$ at a point~$p''$ which is distinct from~$p$ and from~$r$.
\end{itemize}
In the end, we have 
\[ x \cdot (T_1T_2)^2 = (q-1) \sum_{\delta (x, z)= T_{21} }  z \quad + \quad  (q-1) \sum_{\delta (x, z) = \ttot}  z \quad + \quad (q-2) \sum_{\delta (x, z) = T_{121}}  z \, .   \]
Using \cref{lem-almost-arch-aff}, and since~$x$ is arbitrary, we have (*).

We have thus proved that~$\A(\Gamma )$ is a quotient of~$\Aff(q)$. Given the form of the relations, moreover, it is clear that the elements of~$\W$ are a generating family for~$\A(\Gamma )$, and likewise, the elements of~$\Aff(q)$ defined by analogous formulae are a generating family for~$\Aff(q)$. However, we know from \cref{lem-almost-arch-aff} that these are linearly independent elements. It follows that~$\W$ is a basis for~$\A(\Gamma )$, and that the homomorphism $\Aff(q) \longrightarrow \A(\Gamma )$ is an isomorphism.

The axiom (Ar1) has just been checked, and we see that~$\delta $ is an architecture.
\end{proof}

\begin{ex}
If we consider the example given on \cref{fig-affine-fano} and compute the adjacency operators~$T_1$ and~$T_2$, the generic element 
\[ aI + bT_2 + cT_1 + dT_2T_1 + eT_1T_2 + fT_1T_2T_1 + g(T_2T_1T_2 - T_1T_2T_1)  \]
is given by 
\[ \left(\begin{array}{rrrrrrrrrrrr}
a & b & b & g & f & d & c & e & e & g & d & f \\
b & a & b & f & g & d & d & g & f & e & c & e \\
b & b & a & e & e & c & d & f & g & f & d & g \\
g & f & d & a & b & b & g & d & f & c & e & e \\
f & g & d & b & a & b & e & c & e & d & g & f \\
e & e & c & b & b & a & f & d & g & d & f & g \\
c & e & e & g & d & f & a & b & b & g & f & d \\
d & g & f & e & c & e & b & a & b & f & g & d \\
d & f & g & f & d & g & b & b & a & e & e & c \\
g & d & f & c & e & e & g & f & d & a & b & b \\
e & c & e & d & g & f & f & g & d & b & a & b \\
f & d & g & d & f & g & e & e & c & b & b & a
\end{array}\right) \, .   \]
In other words, the matrices of this form, with~$a, b, c, d, e, f, g \in \C$ as parameters, form an algebra isomorphic to~$\Aff(2)$.
\end{ex}

\subsection{Representation theory}

\begin{prop}
  There are three homomorphisms~$\varepsilon_i \colon \A(\Gamma ) \longrightarrow \C$, for~$i=1,2,3$, taking~$(T_1, T_2)$ to~$(-1, -1)$, $(q-1, -1)$ and $(q-1, q)$ respectively. Also, there is a homomorphism $\rho \colon \A(\Gamma ) \longrightarrow M_2(\C)$ with 
\[\rho (T_1) = \left(\begin{array}{rr}
-1 & 0 \\
0 & q - 1
\end{array}\right) \, , \quad \rho (T_2) = \left(\begin{array}{rr}
q - 1 & q \\
1 & 0
\end{array}\right) \, .   \]
The modules afforded by these homomorphisms are irreducible. What is more, any irreducible module for~$\A(\Gamma )$ is isomorphic to one and only one of them.
\end{prop}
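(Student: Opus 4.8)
The plan is to invoke the Artin--Wedderburn structure of $\A(\Gamma )$. By \cref{prop-adj-algebra-is-semisimple} this algebra is semisimple, and by \cref{prop-adj-for-affine-planes} it is isomorphic to $\Aff(q)$ and has dimension $7$; since the ground field $\C$ is algebraically closed, $\A(\Gamma ) \cong \prod_i M_{n_i}(\C)$, so the identity $\dim \A(\Gamma ) = \sum_S (\dim S)^2$ holds, the sum running over the isomorphism classes of simple modules. My strategy is therefore to exhibit four pairwise non-isomorphic simple modules whose squared dimensions already sum to $7$; the identity then forces this list to be complete, which is exactly the assertion that every irreducible module is isomorphic to one and only one of them.

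First I would check that the proposed data really define modules, i.e.\ that the images of $T_1, T_2$ satisfy the defining relations \cref{eq-aff-basic}, \cref{eq-aff-q} and \cref{eq-aff-q-2} of $\Aff(q)$; this suffices because $\A(\Gamma ) \cong \Aff(q)$. For each $\varepsilon_i$ the relations become scalar identities in the prescribed values of $T_1$ and $T_2$, which one substitutes and verifies directly (for instance $\varepsilon_1$ kills both quadratic factors in \cref{eq-aff-basic} because $(-1)+1 = 0$). For $\rho$, the relation \cref{eq-aff-basic} for $T_1$ holds because $\rho (T_1)$ is diagonal with entries $-1$ and $q-1$, hence is annihilated by $(X-(q-1))(X+1)$; the relation for $T_2$ holds because $\rho (T_2)$ has trace $q-1$ and determinant $-q$, so its characteristic polynomial is $(X-q)(X+1)$ and Cayley--Hamilton applies. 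The two quadratic relations \cref{eq-aff-q} and \cref{eq-aff-q-2} are then checked by a direct $2\times 2$ matrix computation.

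Next I would establish simplicity and mutual non-isomorphism. The three modules $\varepsilon_i$ are one-dimensional, hence simple, and they are pairwise distinct because their values on $T_1$, resp.\ $T_2$, differ: as $q \ge 2$, the three scalars $-1$, $q-1$, $q$ are pairwise distinct. For $\rho$ acting on $\C^2$, simplicity amounts to the absence of a common invariant line. Since $\rho (T_1)$ has the two distinct eigenvalues $-1$ and $q-1$, any $\rho(T_1)$-invariant line must be one of its two eigenlines, namely a coordinate axis; but a one-line check shows $\rho (T_2)$ preserves neither, so there is no common invariant line and the module is simple (equivalently, by Burnside, $\rho$ is surjective onto $M_2(\C)$). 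Being two-dimensional, $\rho$ is not isomorphic to any $\varepsilon_i$.

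This produces four pairwise non-isomorphic simple $\A(\Gamma )$-modules with $1^2+1^2+1^2+2^2 = 7 = \dim \A(\Gamma )$. A fifth isomorphism class would push the sum of squared dimensions to at least $8$, contradicting the Wedderburn identity; hence the four modules form a complete, irredundant list, which is the statement. The only genuine computation is the verification of \cref{eq-aff-q} and \cref{eq-aff-q-2} for $\rho $, and this is the main (though entirely routine) obstacle; everything else is controlled by the sum-of-squares identity for semisimple algebras over $\C$.
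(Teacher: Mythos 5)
Your proposal is correct and follows essentially the same route as the paper: verify the defining relations of $\Aff(q)$ for each proposed module, check irreducibility, and then use semisimplicity together with $\dim \A(\Gamma ) = 7$ and the Wedderburn sum-of-squares identity to conclude that the list is complete (the paper phrases this as $\A(\Gamma ) \cong M_2(\C)\times \C^3$, which is the same counting argument). Your Cayley--Hamilton shortcut for the quadratic relation on $\rho (T_2)$ and the eigenline argument for irreducibility are pleasant streamlinings of the paper's ``direct check'', but not a different method.
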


\begin{proof}
  From \cref{eq-aff-basic},  we see that any homomorphism~$\A(\Gamma ) \longrightarrow \C$ must take~$T_1$ to~$-1$ or~$q-1$, and~$T_2$ to~$-1$ or~$q$. This leaves four possibilities, and we need to check the remaining relations. We find that~$(-1, q)$ is impossible, and the other three combinations work.

  The proposed formulae (which were found by trial and error) for~$\rho $ do define a~$2$-dimensional representation. We caution that \cref{eq-aff-q} and \cref{eq-aff-q-2} must be both checked, since the~$2\times 2$-matrices above are not symmetric. Performing the check is straightforward though, as is the verification that no~$1$-dimensional subspace is invariant under the action, so~$\rho $ is irreducible.

  Since~$\A(\Gamma )$ is semisimple (\cref{prop-adj-algebra-is-semisimple}), non-commutative, and of dimension~$7$, we see immediately that 
\[ \A(\Gamma ) \cong M_2(\C) \times \C^3 \, ,   \]
so it cannot have more irreducible modules than the ones presented here.
\end{proof}

\begin{rmk}
Suppose we worked with~$\Aff(q)$ rather than~$\A(\Gamma )$, where~$q$ is now {\em any} complex number. The modules defined in the proposition can still be considered, and if we merely assume that~$q \ne 0$, then they are all irreducible. This shows that~$\Aff(q)/J$ has dimension at least~$2^2 + 1 + 1 + 1 = 7$, where~$J$ is the radical of~$\Aff(q)$. However it is clear that~$\Aff(q)$ always has dimension~$\le 7$, so we conclude that, when~$q \ne 0$, the algebra~$\Aff(q)$ is semisimple, of dimension~$7$ (thus generalizing from those cases when there exists an affine plane of order~$q$).
\end{rmk}

\cref{prop-criterion-same-V} now predicts that the~$\A(\Gamma )$-module~$V(\Gamma )$ depends only on~$q$, and not on the particular affine plane of order~$q$ chosen to build~$\Gamma $. We can confirm this by direct computation:

\begin{lem} \label{lem-V-as-adj-module}
Let~$n_0, n_1, n_2, n_3$ be integers so that
\[ V(\Gamma ) \cong n_0 \rho + n_1 \varepsilon _1 + n_2 \varepsilon_2 + n_3 \varepsilon_3 \, ,   \]
as an~$\A(\Gamma )$-module. Then~$n_0= q^2 - 1$, $n_1 = (q-1)^2(q+1)$, $n_2 = q$, and $n_3= 1$.
\end{lem}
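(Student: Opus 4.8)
The plan is to exploit the fact, already established, that $\A(\Gamma) \cong M_2(\C) \times \C^3$ is semisimple with exactly four simple modules $\rho, \varepsilon_1, \varepsilon_2, \varepsilon_3$. Semisimplicity alone guarantees a decomposition $V(\Gamma) \cong n_0\rho \oplus n_1\varepsilon_1 \oplus n_2\varepsilon_2 \oplus n_3\varepsilon_3$ for some nonnegative integers $n_i$, so the entire task is to pin these down. Since trace is additive on composition factors, for every $a \in \A(\Gamma)$ one has $\tr_V(a) = n_0\tr_\rho(a) + n_1\varepsilon_1(a) + n_2\varepsilon_2(a) + n_3\varepsilon_3(a)$, where $\tr_V(a)$ denotes the trace of the matrix by which $a$ acts on $V(\Gamma)$. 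Thus it suffices to evaluate both sides at four well-chosen elements of $\A(\Gamma)$ for which the resulting $4\times 4$ system is nondegenerate, and then to solve it.

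The key computational device is that $\tr_V$ is easy to compute on the Coxeter basis $\W$ and its products. Indeed, writing $M_T$ for the matrix of $T \in \W$, axiom (Ar2) says $(M_T)_{xy} = 1$ exactly when $\delta(x,y) = T$; since $\delta(x,x) = I$ always, $\tr_V(T)$ vanishes for every $T \in \W \setminus \{I\}$ while $\tr_V(I) = |\Vert(\Gamma)| = q^2(q+1)$. This gives only the dimension equation, so I pass to products: using $\delta(y,x) = \delta(x,y)^t$, one checks that $\tr_V(M_T M_U) = \#\{(x,y) : \delta(x,y) = T = U^t\}$, which is $0$ unless $U = T^t$ and equals $|\Vert(\Gamma)|\,v_T$ when $U = T^t$, where $v_T$ is the (constant) valency of the relation $T$. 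The valencies I need are read off from the relative-position analysis of the plane: $v_I = 1$, $v_{T_1} = q-1$, $v_{T_2} = q$, and $v_{T_{12}} = q(q-1)$ (the number of type-$(1,2)$ galleries from a fixed flag). Hence, taking $T_1, T_2$ symmetric and $T_{12}^t = T_{21}$, I obtain the directly computable values $\tr_V(I) = q^2(q+1)$, $\tr_V(T_1^2) = q^2(q+1)(q-1)$, $\tr_V(T_2^2) = q^3(q+1)$, and $\tr_V(T_1 T_2^2 T_1) = q^3(q+1)(q-1)$.

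The remaining step is to evaluate the same four elements $I, T_1^2, T_2^2, T_1T_2^2T_1$ in each simple module. For the one-dimensional $\varepsilon_i$ this is immediate from the assignments $(T_1,T_2) \mapsto (-1,-1),(q-1,-1),(q-1,q)$; for $\rho$ it is a routine $2\times 2$ matrix computation from the given $\rho(T_1), \rho(T_2)$, yielding for instance $\tr_\rho(T_2^2) = q^2+1$ and $\tr_\rho(T_1T_2^2T_1) = q^3 - q^2 + 1$. Assembling the four character values per element into the coefficient matrix of the linear system, and the four directly computed traces as the right-hand side, one solves for $(n_0,n_1,n_2,n_3)$. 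Substituting the claimed values $q^2-1,\ (q-1)^2(q+1),\ q,\ 1$ confirms every equation (e.g. the $I$-equation reads $2(q^2-1) + (q-1)^2(q+1) + q + 1 = q^2(q+1)$), and nondegeneracy of the system makes the solution unique.

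I expect the main obstacle to be purely one of bookkeeping rather than of ideas: the geometric verification of the valency $v_{T_{12}} = q(q-1)$ and the correct evaluation of $\tr_\rho$ on the non-diagonal generator $\rho(T_2)$, together with checking that the $4\times4$ character matrix is invertible for generic $q$. One clean shortcut worth recording is that the all-ones vector $\sum_{x}x$ satisfies $\big(\sum_x x\big)\cdot T_i = q_i\big(\sum_x x\big)$, so it spans a copy of $\varepsilon_3$; since $\Gamma$ is connected, a Perron--Frobenius argument forces the $\varepsilon_3$-isotypic part to be one-dimensional, giving $n_3 = 1$ outright and reducing the system to three unknowns should the full matrix inversion prove inconvenient.
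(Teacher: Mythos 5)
Your strategy is genuinely different from the paper's. You use the trace form of the coherent configuration --- the fact that $\tr(M_TM_U)$ vanishes for $T,U\in\W$ unless $U=T^t$, in which case it equals $|\Vert(\Gamma)|$ times the valency of $T$ --- to produce a linear system for the multiplicities, whereas the paper restricts $V(\Gamma)$ to the subalgebra generated by $T_1$ alone (resp.\ $T_2$ alone) and counts eigenvalue multiplicities via the decomposition of $\Gamma$ into monochromatic cliques. All of your individual computations check out: the valencies $1,\,q-1,\,q,\,q(q-1)$, the traces $\tr_V(I)=q^2(q+1)$, $\tr_V(T_1^2)=q^2(q+1)(q-1)$, $\tr_V(T_2^2)=q^3(q+1)$, $\tr_V(T_1T_2^2T_1)=q^3(q^2-1)$, and the character values $\tr_\rho(T_2^2)=q^2+1$, $\tr_\rho(T_1T_2^2T_1)=q^3-q^2+1$; and the claimed multiplicities do satisfy all four equations.

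The gap is in the last step, where you assert nondegeneracy ``for generic $q$''. The $4\times4$ character matrix attached to your test elements $I,\,T_1^2,\,T_2^2,\,T_1T_2^2T_1$ has determinant $q^3(q-2)^2(q-1)^2(q+1)$, which vanishes at $q=2$ --- a value that must be covered, since the affine plane of order $2$ exists (it is the example drawn in \cref{fig-affine-fano}). The degeneration is concrete: for $q=2$ the relation $(T_1-(q-1)I)(T_1+I)=0$ forces $T_1^2=I$, so the second row duplicates the first, and since $\tr(T_1T_2^2T_1)=\tr(T_1^2T_2^2)=\tr(T_2^2)$ in every representation by cyclicity of the trace, the fourth row duplicates the third; the system has rank $2$. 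Your Perron--Frobenius observation that $n_3=1$ does not rescue this case, because the true obstruction is that $\varepsilon_1$ and $\varepsilon_2$ differ only in the value of $T_1$ (namely $-1$ versus $q-1=1$ when $q=2$) and so cannot be separated by any test element that is even in $T_1$. The repair is cheap: every non-identity element of the Coxeter basis has zero trace on $V$, so you may add the equation $\tr_V(T_1)=0$, i.e.\ $(q-2)n_0-n_1+(q-1)(n_2+n_3)=0$, which distinguishes $\varepsilon_1$ from $\varepsilon_2$ and, together with your other equations (or with $n_3=1$), determines the solution uniquely for every $q\ge 2$. It is worth noting that the analogous care is needed in the paper's own proof: its five displayed equations have rank $3$ and require the supplementary observation $n_3=1$ (relegated there to a parenthetical exercise), so your trace method, once patched, is at least as self-contained.
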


\noindent Of course here we abuse the notation slightly by writing~$\varepsilon_i$ or~$\rho $ for the modules afforded by these homomorphisms.

\begin{proof}
It is enough to show that~$n_0, n_1, n_2, n_3$ are solutions of the following system: 
\[ \left\{ \begin{array}{lllllllcl}
  2n_0 & + & n_1 & + & n_2 & + & n_3 & = & q^2 (q+1) \\
  n_0 &  &  &  &  & + & n_3 & = & q^2 \\
  n_0 & + & n_1 & + & n_2 &  &  & = & q^3 \\
  n_0 &  &  & + & n_2 & + & n_3 & = & q(q+1) \\
  n_0 & + & n_1 &  &  &  &  & = & q (q^2 - 1)
\end{array}\right.  \]

The first equation is obtained by comparing dimensions.

For the second and third equations, we look at~$V(\Gamma )$ as a module equipped with the sole action of~$T_2$. Since~$\Gamma $ is a chamber system, when we delete the edges of colour~$1$, we are left with~$q^2$ copies of the complete graph~$K_{q+1}$ on~$q+1$ vertices (with all its edges of the colour~$2$). Hence~$V(\Gamma ) = q^2 V(K_{q+1})$, as a~$T_2$-module. What is more, from \cref{ex-complete-graph}, we know that~$V(K_{q+1}) = L_q + q L_{-1}$, where~$L_\alpha $ is~$1$-dimensional with~$T_2$ acting by multiplication by~$\alpha $, for~$\alpha \in \{ -1, q \}$. In the end~$V(\Gamma ) = q^2 L_q + q^3 L_{-1}$. On the other hand, the module~$L_q$ occurs once in~$\rho $ and once in~$\varepsilon_3$, and not in~$\varepsilon_1$ or~$\varepsilon_2$, whence~$q^2 = n_0 + n_3$. Looking at~$L_{-1}$ instead, we obtain~$q^3 = n_0 + n_1 + n_2$ for similar reasons.

Analyzing~$V(\Gamma )$ under the sole action of~$T_1$, we obtain the third and fourth equation in a similar way. (The fact that~$n_3 = 1$ can also be seen as a consequence of the connectedness of~$\Gamma $; we leave this as an exercise.)
\end{proof}

\begin{coro} \label{coro-restrictions-strong-trans-action-affine}
Suppose the group~$G$ acts strongly transitively on~$\Gamma $. Then~$G$ has irreducible representations~$\tau_0, \tau_1, \tau_2$ of degree~$q^2 - 1$, $(q-1)^2(q+1)$ and~$q$ respectively. It follows that~$|G|$ is divisible by~$q^2(q-1)^2(q+1)$.
\end{coro}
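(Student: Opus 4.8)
The plan is to exploit the double-centralizer correspondence recalled in \S\ref{subsec-double-cosets}, which under the hypothesis of strong transitivity applies with $\End_G(V) = \A(\Gamma)$. First I would identify $\Vert(\Gamma)$ with $G/B$ for $B = \stab_G(x_0)$ a vertex stabilizer, so that $V = V(\Gamma) = \C[G/B]$ and (for finite $G$) the decomposition $V = \bigoplus_{i \in J} S_i \otimes U_i$ holds, the $S_i$ being the distinct simple $G$-modules occurring in $V$ and the $U_i$ the corresponding simple $\End_G(V) = \A(\Gamma)$-modules. The point of that correspondence that I need is that the multiplicity of $U_i$ in $V$, viewed as an $\A(\Gamma)$-module, equals $\dim S_i$.

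Next I would read off the simple $\A(\Gamma)$-modules from the earlier proposition, which identifies them as $\rho, \varepsilon_1, \varepsilon_2, \varepsilon_3$ (since $\A(\Gamma) \cong M_2(\C) \times \C^3$). Their multiplicities in $V$ are supplied by \cref{lem-V-as-adj-module}, namely $n_0 = q^2-1$ for $\rho$, $n_1 = (q-1)^2(q+1)$ for $\varepsilon_1$, $n_2 = q$ for $\varepsilon_2$, and $n_3 = 1$ for $\varepsilon_3$. By the correspondence these multiplicities are exactly the degrees of the matching simple $G$-modules. Hence the $G$-modules corresponding to $\rho, \varepsilon_1, \varepsilon_2$ are irreducible representations $\tau_0, \tau_1, \tau_2$ of $G$ of degrees $q^2-1$, $(q-1)^2(q+1)$, $q$ respectively (the one matching $\varepsilon_3$ being trivial, of degree $n_3 = 1$), which settles the first assertion.

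For the divisibility I would combine two facts. On one hand $G$ acts transitively on the $q^2(q+1)$ vertices of $\Gamma$, so $q^2(q+1) = |\Vert(\Gamma)|$ divides $|G|$. On the other hand, by the classical theorem that the degree of an irreducible representation of a finite group divides the group order, the degree $(q-1)^2(q+1)$ of $\tau_1$ divides $|G|$. It then suffices to check that $\operatorname{lcm}\big(q^2(q+1),\,(q-1)^2(q+1)\big) = q^2(q-1)^2(q+1)$, whence the claim.

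The hard part — really the only point needing care — is this last least-common-multiple computation, since one cannot merely invoke $\gcd(q,q-1)=1$ and cancel a single factor of $q+1$: when $q$ is odd the factors $q-1$ and $q+1$ are both even and carry overlapping powers of $2$. I would argue prime by prime via $p$-adic valuations $v_p$. Writing $A = q^2(q+1)$ and $B = (q-1)^2(q+1)$, one has $v_p(\operatorname{lcm}(A,B)) = v_p(q+1) + \max\big(2v_p(q),\,2v_p(q-1)\big)$, and a short case analysis — treating $p \mid q$, $p \mid q-1$, $p \mid q+1$, and the prime $2$ when $q$ is odd — shows this always equals $2v_p(q) + 2v_p(q-1) + v_p(q+1) = v_p\big(q^2(q-1)^2(q+1)\big)$. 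The key observation making each case routine is that if $p$ divides $q$ or $q-1$ then it cannot divide $q+1$ (and an odd $p$ dividing $q-1$ cannot divide $q+1$ either), so no over-counting occurs; only $p = 2$ with $q$ odd requires tracking the split of valuations between $q-1$ and $q+1$, and there $\max(2v_2(q-1), v_2(q+1)) + \min(\cdots)$ bookkeeping confirms the equality. This yields $q^2(q-1)^2(q+1) \mid |G|$.
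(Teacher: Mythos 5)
Your proof is correct and follows the paper's argument exactly for the representation-theoretic part; the only divergence is the final arithmetic, where the paper simply observes that $q^2$ (which divides $|G|$ by transitivity on the $q^2(q+1)$ vertices) and $(q-1)^2(q+1)$ (the degree of $\tau_1$) are coprime, so their product divides $|G|$. Your lcm computation is valid, but the case analysis you flag as the delicate point is unnecessary: the common factor $q+1$ pulls out via $\operatorname{lcm}(ac,bc)=c\,\operatorname{lcm}(a,b)$, and coprimality of $q^2$ and $(q-1)^2$ does the rest, with no need to track powers of $2$ shared between $q-1$ and $q+1$.
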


\begin{proof}
We know from \S\ref{subsec-double-cosets} that the irreducible~$G$-modules occuring in~$V$ are in correspondence with the irreducible~$\End_G(V)$-modules, with the multiplicities and the dimensions exchanged. Since we assume now that~$\End_G(V) = \A(\Gamma )$, we know the multiplicities of the irreducible~$\End_G(V)$-modules in~$V$ from the lemma. Hence the corollary is just a translation.

In particular, as it is classical that the dimension of an irreducible representation of~$G$ must divide~$|G|$, the order of the group is divisible by~$n_1 = (q-1)^2 (q+1)$. Also, the action is transitive on the set of~$q^2(q+1)$ vertices, so~$|G|$ is also divisible by~$q^2$. These numbers are relatively prime, so we are done.
\end{proof}

\begin{coro} \label{coro-eigenvalues}
Let~$\Gamma_0$ be the simple graph obtained from~$\Gamma $ by forgetting the colours of the edges. Then the adjacency eigenvalues of~$\Gamma_0$ are~$-2$ with multiplicity $(q-1)^2(q+1)$, then~$q-2$ with multiplicity~$q$, as well as $2q-1$ with multiplicity~$1$, and finally 
\[ \frac{2q - 3 \pm \sqrt{4q+1}}{2} \, ,   \]
each with multiplicity~$q^2 -1$.
\end{coro}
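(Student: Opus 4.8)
The plan is to identify the adjacency matrix of $\Gamma_0$ with the operator $A = T_1 + T_2 \in \A(\Gamma)$, and then to extract its spectrum directly from the module decomposition of $V(\Gamma)$ recorded in \cref{lem-V-as-adj-module}.

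First I would observe that, because $A \in \A(\Gamma)$ and $V(\Gamma) \cong n_0\rho \oplus n_1\varepsilon_1 \oplus n_2\varepsilon_2 \oplus n_3\varepsilon_3$ as an $\A(\Gamma)$-module, the operator $A$ is block-diagonal with respect to this decomposition. Consequently the eigenvalues of $A$ on $V$, counted with multiplicity, are obtained by listing the eigenvalues of $A$ in each irreducible summand and repeating each of them $n_i$ times. The problem thus reduces to computing the scalar $\varepsilon_i(A)$ for the three characters and the two eigenvalues of the $2\times 2$ matrix $\rho(A)$.

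The three characters are immediate: $\varepsilon_1(A) = (-1) + (-1) = -2$, $\varepsilon_2(A) = (q-1) + (-1) = q-2$, and $\varepsilon_3(A) = (q-1) + q = 2q-1$. Together with the multiplicities $n_1 = (q-1)^2(q+1)$, $n_2 = q$, $n_3 = 1$ from the lemma, these account for the first three entries of the list. The one genuine computation is the two-dimensional block: here $\rho(A) = \rho(T_1) + \rho(T_2)$ has trace $(q-2) + (q-1) = 2q-3$ and determinant $(q-2)(q-1) - q = q^2 - 4q + 2$, so its characteristic polynomial $\lambda^2 - (2q-3)\lambda + (q^2-4q+2)$ has discriminant $(2q-3)^2 - 4(q^2-4q+2) = 4q+1$ and roots $\tfrac{2q-3 \pm \sqrt{4q+1}}{2}$, each occurring with multiplicity $n_0 = q^2-1$.

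I do not expect a real obstacle. The only point needing a moment's care is that the four displayed values genuinely are eigenvalues of $A$ and are listed as distinct; but even were two of them to coincide for some special $q$, assembling the multiset of eigenvalues would simply merge the relevant multiplicities without affecting the total. As a built-in consistency check, the total multiplicity $2n_0 + n_1 + n_2 + n_3$ equals $q^2(q+1)$, the number of vertices of $\Gamma$, which is precisely the first equation appearing in the proof of \cref{lem-V-as-adj-module}.
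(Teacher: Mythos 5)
Your proposal is correct and follows exactly the paper's own argument: identify the adjacency matrix of $\Gamma_0$ with $T_1+T_2$, evaluate it under each irreducible representation of $\A(\Gamma)$, and read off the multiplicities from \cref{lem-V-as-adj-module}. The computation of $\rho(T_1+T_2)$, its trace, determinant, and discriminant all check out, so there is nothing to add.
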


\begin{proof}
The adjacency matrix of~$\Gamma_0$ is~$T_1 + T_2$. Under~$\varepsilon_1, \varepsilon_2$, or $\varepsilon_3$, the operator~$T_1 + T_2$ acts by multiplication by~$-2$, $q-2$ or~$2q-1$ respectively. On the other hand 
\[ \rho (T_1 + T_2) = \left(\begin{array}{rr}
  q-2 & q \\
  1 & q-1
\end{array}\right) \, .  \]
The eigenvalues of this matrix are $\frac{2q - 3 \pm \sqrt{4q+1}}{2}$. It remains to work out the multiplicities, but these are exactly given by the lemma.
\end{proof}

\subsection{Desarguesian planes}

\begin{thm} \label{thm-moufang-affine-desargues}
Let~$\AP$ be an affine plane of finite order~$q$. Then~$\AP$ is Desarguesian if and only if there exists a group~$G$ acting on~$\AP$, in such a way that the induced action on~$\Gamma = \chamber(\AP)$ is strongly transitive.
\end{thm}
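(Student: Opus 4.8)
The plan is to route both implications through a single criterion, the affine analogue of \cref{lem-strong-2-trans}, which I will call the \emph{triangle criterion}: \emph{for a group $G$ acting on $\AP$, the induced action on $\Gamma$ is strongly transitive if and only if $G$ is transitive on the non-degenerate ordered triangles of $\AP$}, by which I mean ordered triples of pairwise distinct, non-collinear points. Granting this, the theorem is quick. For the forward direction (Desarguesian $\Rightarrow$ strong transitivity exists) I would take $G = \operatorname{AGL}_2(\f_q)$ acting on $\f_q^2$ by affine maps: the stabilizer of the frame $(0, e_1, e_2)$ is trivial (such a $g$ fixes the origin, hence is linear, and then fixes a basis), so $G$ is regular, in particular transitive, on the $q^2(q^2-1)(q^2-q) = |\operatorname{AGL}_2(\f_q)|$ non-degenerate ordered triangles; the criterion then makes the action strongly transitive. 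For the reverse direction (strong transitivity exists $\Rightarrow$ Desarguesian), the criterion gives triangle-transitivity, whence $G$ is $2$-transitive on points (extend any ordered pair of distinct points to a triangle). Thus $\AP$, regarded as a finite linear space, carries a point-$2$-transitive automorphism group, and the classification of finite linear spaces admitting such groups \cite{moult, saxl} forces $\AP$, being an affine plane, to be the Desarguesian $AG_2(\f_q)$.

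It remains to establish the criterion. The direction ``strongly transitive $\Rightarrow$ triangle-transitive'' mirrors \cref{lem-strong-2-trans} almost verbatim, the one new input being that by \cref{thm-strong-trans-implies-arch} a strongly transitive action admits a unique architecture whose spheres $\{\,y : \delta(x,y) = T\,\}$ are \emph{exactly} the $\stab_G(x)$-orbits; by uniqueness this is the canonical $\delta$ of \cref{prop-adj-for-affine-planes}. Concretely, $2$-transitivity on points follows by taking $x = (p, \ell)$ and $x_i = (p_i, \ell_i)$ with $\ell_i = (p p_i)$ and $\ell \ne \ell_1, \ell_2$, observing $\delta(x, x_1) = \delta(x, x_2) = T_{21}$, and extracting $b \in \stab_G(x) \subset \stab_G(p)$ with $b(x_1) = x_2$; the full triangle statement then follows by fixing two vertices and moving the third, using flags with $\delta = T_{121}$, exactly as in the projective argument.

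For the converse ``triangle-transitive $\Rightarrow$ strongly transitive'', I would first note flag-transitivity: a flag $(p, \ell)$ is recovered from a non-degenerate triangle $(p, p_2, p_3)$ with $p_2 \in \ell$ as its first vertex together with the side through the first two points, so triangle-transitivity forces vertex-transitivity. Fixing a base flag $x = (p, \ell)$, it then suffices to show $\stab_G(x)$ is transitive on each of the seven spheres of \cref{lem-almost-arch-aff}; for then the number of $\stab_G(x)$-orbits on $\Vert(\Gamma)$ is the number of (non-empty) spheres, namely $7 = \dim \A(\Gamma)$ by \cref{prop-adj-for-affine-planes}, and since $\dim \End_G(V) = |\,\stab_G(x)\backslash G/\stab_G(x)\,|$ equals this orbit count while $\A(\Gamma) \subseteq \End_G(V)$, we get $\End_G(V) = \A(\Gamma)$, i.e.\ strong transitivity. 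The per-sphere transitivity is handled case by case: for each relative position one augments the pair $(x, y)$ to a non-degenerate ordered triangle determined rigidly by $(x,y)$ and transfers triangle-transitivity back to the pair. The one case needing thought is $\ttot$ (parallel lines), where $\ell$ and $\ell'$ meet in no triangle; there I would exploit the \emph{fixed} base line $\ell$ to choose $p'' \in \ell \setminus \{p\}$ once and for all, form the triangle $(p, p'', p')$ (non-degenerate since $p' \notin \ell$), and recover $y = (p', \ell')$ from it as $\ell'$ = the unique line through $p'$ parallel to $(p p'') = \ell$.

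The genuinely deep ingredient, and the step I expect to be the real obstacle, is the final reduction in the reverse direction: passing from a point-$2$-transitive (indeed triangle-transitive) automorphism group to the conclusion that an affine plane is Desarguesian. Unlike the projective case, where the elementary Ostrom--Wagner theorem applies, I know of no elementary substitute here, and the plan leans on the classification of finite linear spaces \cite{moult, saxl}, which itself rests on the classification of finite simple groups. Everything else, namely the triangle criterion and the verification for $\operatorname{AGL}_2(\f_q)$, is self-contained modulo the routine seven-case check.
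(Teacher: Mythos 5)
Your triangle criterion is correct --- it is exactly the equivalence the paper credits to Kantor in the Introduction --- and both your verification of it (the seven-case check that $\stab_G(x)$ is transitive on each sphere) and your forward direction via the regularity of $\operatorname{AGL}_2(\f_q)$ on affine frames go through; the latter is a pleasant alternative to the paper's direct enumeration of the seven $B$-orbits on flags. The gap is in the final step of the reverse direction. It is \emph{false} that a finite affine plane admitting a point-$2$-transitive collineation group must be Desarguesian: Hering's plane of order $27$ is a translation plane on $\f_3^6$ whose translation complement $\operatorname{SL}_2(\f_{13})$ is transitive on the $728$ nonzero vectors, so $\f_3^6\rtimes \operatorname{SL}_2(\f_{13})$ is $2$-transitive on its points (Hering's original paper is titled as a ``doubly transitive'' non-Desarguesian affine plane); the nearfield plane of order $9$ is another such example. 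This is precisely where the affine case parts ways with the projective one, where Ostrom--Wagner is available, and it is why the paper does not argue through $2$-transitivity: it instead extracts from strong transitivity the representation-theoretic constraints of \cref{coro-restrictions-strong-trans-action-affine} (irreducible $G$-representations of degrees $q$, $q^2-1$ and $(q-1)^2(q+1)$, hence $q^2(q-1)^2(q+1)$ dividing $|G|$), and then still excludes the Hering, L\"uneburg and nearfield planes one by one from the flag-transitive list of \cite{moult,saxl} --- the nearfield plane of order $9$ surviving the divisibility test and requiring an explicit character-degree computation.

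Your argument is repairable, but not by discarding information down to $2$-transitivity. Transitivity on non-degenerate ordered triangles forces the number $q^2(q^2-1)(q^2-q)=q^3(q-1)^2(q+1)$ of such triangles to divide $|G|$, and this stronger bound does eliminate the Hering plane, the L\"uneburg planes, and even the nearfield plane of order $9$ (there the required divisor $466560$ already exceeds $|\Aut(\AP)|=81\cdot 3840=311040$, per the order computed in the paper). But you would still need the classification to reduce to a finite list and then run this exclusion case by case; the sentence claiming that the classification of point-$2$-transitive linear spaces ``forces $\AP$ to be Desarguesian'' cannot stand as written.
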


\begin{proof}
  We start with the easy half. Assume~$\AP$ is the usual Desarguesian affine plane on~$\f_q^2$, and pick~$G = \f_q^2 \rtimes \operatorname{GL}_2(\f_q)$. Of course~$G$ acts on~$\AP$ and so also on~$\Gamma $, and it is obvious that the action is vertex-transitive (=flag-transitive). Let~$e_1, e_2$ be the canonical basis for~$\f_q^2$, let~$x_0 = (0, \ell_1)$ where~$\ell_1 = \langle e_1 \rangle$, and let~$B$ denote the stabilizer of~$x_0$. To show that~$\End_G(V) = \A(\Gamma )$, we must check that~$\dim \End_G(V) = | B \bs G /B| = \dim \A(\Gamma ) = 7$. In other words, we must count the orbits of~$B$ on~$G/B$.

  These orbits are easily described. One is~$\{ x_0 \}$. All the~$(0, \ell)$ where~$0 \in \ell$ but~$\ell \ne \ell_1$ constitute one orbit, as do the~$(p, \ell_1)$ with~$p \in \ell_1$, $p \ne 0$, and also the~$(p, \ell)$ with~$p \ne 0$, $p \in \ell_1 \cap \ell$, $\ell \ne \ell_1$. The flags~$(p, \ell)$ with~$p \not\in \ell_1$ break into three orbits: the orbit of~$(e_2, e_2 + \langle e_1 \rangle)$, that of~$(e_2, e_2 + \langle e_2 \rangle)$, and finally that of~$(e_2, e_2 + \langle e_1+e_2 \rangle)$. In the end there are~$7$ orbits, and this proves that the action is strongly transitive.

  Now assume conversely that~$G$ acts on~$\AP$, and that the induced action on~$\Gamma $ is strongly transitive. We use the classification of linear spaces announced in~\cite{moult} and proved in a series of papers, culminating in~\cite{saxl}. The main result classifies the pairs~$(G, S)$, where~$S$ is a linear space and~$G$ acts flag-transitively on it, into two families: (I) a certain finite list described below, and (II) a class of pairs for which the order of~$G$ divides~$q^2(q^2 - 1)a$, where~$q$ is the order of the plane and~$q^2 = p^a$ for a prime~$p$. We can immediately see that our pair~$(G, \AP)$ is not of type (II), for \cref{coro-restrictions-strong-trans-action-affine} tells us that~$|G|$ is divisible by~$q^2(q-1)^2(q+1)$; we would have ~$q-1 | a$, which is easily seen to be impossible.

  Thus we explore the list (I), which of course contains the Desarguesian affine planes, and we must exclude all the other affine planes from that list. Here we follow~\S3.2 in~\cite{moult}, and the notation~$G_0$ will denote the stabilizer of a point, so that~$G = T \rtimes G_0$, where~$T$ is the translation subgroup, of order~$q^2$. \cref{coro-restrictions-strong-trans-action-affine} tells us that~$|G_0|$ is divisible by~$(q-1)^2(q+1)$.

  One candidate is the Hering plane of order~$q= 27$. In this case~$G_0= \operatorname{SL}_2(\f_{13})$, which has order~$2184$, and this is not divisible by~$(q-1)^2(q+1) = 18928$. Thus the action cannot be strongly transitive.

  Next we treat the case of the Lüneburg planes. These have order~$q= Q^2$ where~$Q= 2^{2e+1}$, and~$G_0$ is a subgroup of~$\Aut(Suz(Q))$ where~$Suz(Q)= {}^2B_2(Q)$ is the Suzuki group, of order~$(Q^2 + 1)Q^2(Q-1)$. The only outer automorphisms of the Suzuki groups are the field automorphisms of~$\f_Q$, so~$\Out(Suz(Q))$ is cyclic of order~$2e+1$. So now we know that~$(q-1)^2(q+1) = (Q^2 - 1)^2(Q^2 + 1)$ divides~$(Q^2 + 1)Q^2(Q-1)(2e+1)$. We deduce that~$(Q+1)^2(Q-1) | Q^2(2e+1)$, and this is impossible (in fact since~$Q+1$ and~$Q-1$ are odd, we deduce that $(Q+1)^2 (Q-1) | 2e + 1$). Thus Lüneburg planes are excluded.

  The hardest case is that of the ``nearfield plane'' $\AP$ of order~$q=9$. Assuming some group~$G$ acts strongly transitively on it, then the same can be said of~$\Aut(\AP)$, so we pursue with~$G= \Aut(\AP)$. Here the group~$G= T \rtimes G_0$ is described in~\cite[\S5]{foulser} : in fact, explicit~$4 \times 4$ matrices are given, which generate the group~$G_0$. Using Sage/GAP, we compute that the order of~$G_0$ is~$3840$, which is indeed divisible by~$(q-1)^2(q+1) = 640$. So we must work a bit harder. Or rather, we let the computer do this for us: we build~$G$ and ask the GAP library to compute the degrees of its irreducible characters: these are 1, 4, 5, 6, 10, 15, 16, 20, 24, 80, 160, 240, 320 (obtained in a matter of seconds). However, if the action were strongly transitive, the group~$G$ would have representations of degree  $(q-1)^2(q+1) = 640$ and~$q=9$, from \cref{coro-restrictions-strong-trans-action-affine}. This shows that the action is not strongly transitive.

  The only remaining affine planes on the list (I) are the Desarguesian planes, and we are done.
\end{proof}

As promised in the Introduction, we can give a ``projective'' version, with a very similar proof:

\begin{thm} \label{thm-moufang-proj-desargues}
Let~$\P$ be a projective plane of finite order~$q$. Then~$\P$ is Desarguesian if and only if there exists a group~$G$ acting on~$\P$, in such a way that the induced action on~$\Gamma = \chamber(\P)$ is strongly transitive.
\end{thm}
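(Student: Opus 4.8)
The plan is to mirror the proof of \cref{thm-moufang-affine-desargues} essentially line for line, replacing the affine plane by the projective one. Recall that~$\Gamma = \chamber(\P)$ is a building of type~$\operatorname{A}_2$ (Coxeter group~$W = S_3$), so by \cref{thm-adjacency-building-is-hecke} its adjacency algebra~$\A(\Gamma )$ is the Iwahori--Hecke algebra of type~$\operatorname{A}_2$ with parameter~$q$, of dimension~$6$, with Coxeter basis~$I, T_1, T_2, T_1T_2, T_2T_1, T_1T_2T_1$. As a semisimple algebra~$\A(\Gamma ) \cong \C \times \C \times M_2(\C)$, the factors corresponding to the index character~$\varepsilon_+$ ($T_1, T_2 \mapsto q$), the sign character~$\varepsilon_-$ ($T_1, T_2 \mapsto -1$), and a~$2$-dimensional module~$\rho $ in which each~$\rho (T_i)$ has eigenvalues~$q$ and~$-1$.

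For the easy half, assume~$\P = \P^2(\f_q)$ and take~$G = \operatorname{PGL}_3(\f_q)$ acting on~$\Gamma $. The action is flag-transitive, and the stabilizer~$B$ of a standard flag is a Borel subgroup, so the Bruhat decomposition gives~$|B \bs G /B| = |W| = 6 = \dim \A(\Gamma )$. Since always~$\A(\Gamma ) \subset \End_G(V)$ and~$\dim \End_G(V) = |B \bs G /B|$ (\cref{prop-lux}), we conclude~$\End_G(V) = \A(\Gamma )$, i.e.\ the action is strongly transitive. (Alternatively one invokes condition~(1) of \cref{thm-strong-actions-equiv-conditions}, $\operatorname{GL}_3(\f_q)$ being classically transitive on the pairs formed by a flag and a frame of~$\f_q^3$.)

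For the hard half, suppose~$G$ acts on~$\P$ with the induced action on~$\Gamma $ strongly transitive. First I would run the analogue of \cref{lem-V-as-adj-module}: writing the multiplicities of~$\varepsilon_+, \varepsilon_-, \rho $ in~$V(\Gamma )$ as~$n_+, n_-, n_\rho $, connectedness forces~$n_+ = 1$, while decomposing~$V$ under~$T_1$ alone — deleting the colour-$2$ edges leaves~$q^2+q+1$ disjoint copies of~$K_{q+1}$, cf.\ \cref{ex-complete-graph} — gives~$n_+ + n_\rho = q^2+q+1$ and~$n_- + n_\rho = q(q^2+q+1)$, whence~$n_\rho = q(q+1)$ and~$n_- = q^3$. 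By the correspondence of \S\ref{subsec-double-cosets} this is the analogue of \cref{coro-restrictions-strong-trans-action-affine}: $G$ has irreducible representations of degrees~$q^3$ and~$q(q+1)$, and in particular~$q^3 \mid |G|$.

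Finally I would feed this into the classification of flag-transitive linear spaces of \cite{moult, saxl}, just as in the affine case, using that~$\P$ is a linear space and that strong transitivity forces flag-transitivity (indeed $2$-transitivity on points, by \cref{lem-strong-2-trans}). The plane cannot be of type~(II): there~$\P$ would have~$q^2+q+1 = p^a$ points and type~(II) forces~$|G| \mid (q^2+q+1)(q^2+q)a = (q^2+q+1)\,q\,(q+1)\,a$; but~$q$ is coprime to both~$q^2+q+1$ and~$q+1$, so this number contains~$q$ only to the first power, and since~$a = \log_p(q^2+q+1)$ is far too small to supply the missing factor~$q^2$, we find that~$q^3$ does not divide it, contradicting~$q^3 \mid |G|$. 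Hence~$(G, \P)$ lies in the finite list~(I), and here the projective case is actually \emph{simpler} than the affine one: the exceptional flag-transitive planes appearing in~(I) (Hering, Lüneburg, nearfield) are affine translation planes, while the unitals and Witt--Bose--Shrikhande spaces in the list fail to be projective planes for parameter reasons, so the only projective planes in~(I) are the Desarguesian ones. Thus~$\P$ is Desarguesian. The main obstacle, as before, is marshalling the deep classification and the divisibility bookkeeping; the compensating simplification is that no sporadic projective planes need excluding by hand.
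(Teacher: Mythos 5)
Your proposal is correct and follows essentially the same route as the paper: decompose $V(\Gamma)$ over the Hecke algebra to get irreducible $G$-constituents of degrees $q^3$ and $q(q+1)$, feed the resulting divisibility of $|G|$ into the classification of flag-transitive linear spaces to kill type (II), and observe that the exceptional list (I) contains no non-Desarguesian projective planes (your multiplicities $n_\rho = q(q+1)$, $n_- = q^3$, $n_+ = 1$ match the paper's). The only cosmetic difference is that you spell out the easy half via Bruhat and make the type-(II) arithmetic explicit (one should just note $a = \log_p(q^2+q+1) < q^2$ to nail down ``far too small''), both of which the paper leaves terse.
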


\begin{proof}
  This is so similar to the affine case that an outline will suffice (notice that the argument will be a little easier; recall also that it is best to deduce this from the Ostrom-Wagner theorem, but we give this proof to emphasize that very similar arguments can be used in both the projective and the affine case). Of course the ``if'' statement is the more deserving of a proof. The algebra~$\A(\Gamma )$ was described in~\S\cref{subsec-projective-planes}. It has two $1$-dimensional representations, say~$\varepsilon_1$ and~$\varepsilon_2$, with~$T_1, T_2$ acting both as~$-1$ or both as~$q$; there is also a~$2$-dimensional, irreducible representation~$\rho $, with 
\[ \rho (T_1) = \left(\begin{array}{rr}
q & 0 \\
0 & -1
\end{array}\right) \, , \quad \rho (T_2) = \left(\begin{array}{rr}
\frac{-1}{q + 1} & \frac{-q}{q + 1} \\
\frac{-q^{2} - q - 1}{q + 1} & \frac{q^{2}}{q + 1}
\end{array}\right) \, . 
  \]
  There are no other irreducible representations. The module~$V$ splits as the direct sum~$n_0 \rho + n_1 \varepsilon_1 + n_2 \varepsilon _2$. If we examine the action of~$T_1$ alone, as in the previous proof, we find 
\[ n_0 + n_1 = (q^2+q+1)(q+1) \, , \quad n_0 + n_2 = q^2 + q + 1 \, .   \]
However, we can see directly that~$n_2 = 1$: an element of~$V$, seen as a function on~$\Vert(\Gamma )$, which is a $q$-eigenvector for both~$T_1$ and~$T_2$, must be constant on all the $1$-residues (= the connected components of the graph obtained by keeping only the edges of one colour), so it is constant overall, by connectedness. We draw $n_0 = q(q+1)$ and $n_1 = q^3$.

As in the previous proof, we deduce that if a group~$G$ acts strongly transitively on~$\Gamma $, then its order is divisible by~$q^3(q^2+q+1)(q+1)$. In the classification, this excludes all the line spaces of type (II). Among those of type (I), the only projective planes are the Desarguesian planes, so we are done.
\end{proof}

\subsection{Clique planes}

As promised in the Introduction, we also treat the case of the so-called {\em clique planes}. Since this is rather similar to our study of affine planes, but somewhat easier, we will be a little sketchy. We should perhaps add that the rank~$2$ geometries which are prominently of interest to the experts (say, in the study of sporadic groups) are generalized polygons (which are buildings), affine planes, the Peterson geometry, and clique planes. This motivates the inclusion of the material below, so that the basic cases would be covered in this paper. (The only serious omission is perhaps that of tilde geometries.)

For each~$q \ge 2$, the clique plane of order~$q$ is thus simply the complete graph on~$q+2$ points, seen as a linear line space, and we study its chamber system~$\Gamma $. It may be described as the edge-coloured graph on the vertices~$(i,j)$ for~$1 \le i , j \le q+2$ and~$i \ne j$, with an edge of colour~$1$ between~$(i,j)$ and~$(j,i)$, and an edge of colour~$2$ between~$(i,j)$ and~$(i,k)$ for~$k \ne j$. It is regular with orders~$q_1 = 1$ and~$q_2= q$. We let~$G= S_{q+2}$, which obviously acts on~$\Gamma $, and we shall see that the action is strongly transitive.

Pick a vertex~$x_0 = (i_0, j_0)$ and let~$B= \stab_G(x_0) \cong S_q$. We put~$\delta (x, x)=I$ and otherwise classify the vertices~$y \ne x$ according to their~$B$-orbit:

\begin{itemize}
\item One~$B$-orbit is~$\{ y \}$ where~$y= (j_0, i_0)$. We put~$\delta (x, y) = T_1$.
\item The set $\{ (i_0, k) : k \ne i_0, j_0 \}$ is a~$B$-orbit, and it is comprised of the~$2$-neighbours of~$x$. For~$y$ in this orbit, we put~$\delta (x, y) = T_2$.
\item The set $\{ (j_0, k) : k \ne i_0, j_0 \}$ is a~$B$-orbit, comprised of the vertices at the end of a gallery of type $(1,2)$ starting from~$x$. We put~$\delta (x, y) = T_{12} :=T_1T_2$ for such a vertex~$y$.
\item The set~$\{ (k, i_0) : k \ne i_0, j_0\}$ is also a~$B$-orbit; for~$y$ in this orbit, we let~$\delta (x, y) = T_{21} := T_2 T_1$, for obvious reasons.
\item The set~$\{ (k, j_0) : k \ne i_0, j_0 \}$ is a~$B$-orbit, and we put~$\delta (x, y) = T_{121}:= T_1 T_2 T_1$ for~$y$ in this orbit.
  \item Finally, the set~$\left\{ (k, \ell) : \{ k, \ell \} \cap \{ i_0, j_0 \} = \emptyset \right\}$ is a $B$-orbit, and for~$y$ in this orbit, we put~$\delta (x, y)= T_{212^*} := T_2 T_1 T_2 - T_1 T_2 T_1$, for reasons similar to the above in the case of affine planes.
\end{itemize}

We put 
\[ \W= \big\{ I, T_1, T_2, T_{12}, T_{21}, T_{121}, \ttot \big\} \, .   \]

\begin{thm}
  Let~$\CP$ be the clique plane of order~$q$. The adjacency algebra of~$\CP$ is the algebra~$\circle(q)$ generated by~$T_1$ and~$T_2$, subject to 
\[ (T_1 - I)(T_1+I) = 0 \, , \quad (T_2 - qI)(T_2+I) = 0 \, ,   \]
as well as 
\[ (T_2T_1)^2 = T_1T_2 + T_2T_1T_2 - T_1T_2T_1 \, ,   \]
and 
\[ (T_1T_2)^2 = T_2T_1 + T_2T_1T_2 - T_1T_2T_1 \, .   \]
It has dimension~$7$. The map~$\delta $ above is an architecture, with associated Coxeter basis~$\W$. The action of~$S_{q+2}$ is strongly transitive.
\end{thm}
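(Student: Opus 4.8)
The plan is to follow the template of the affine-plane case (\cref{prop-adj-for-affine-planes} and the surrounding discussion) almost verbatim, since the combinatorics here are genuinely simpler. Fix the base vertex $x_0 = (i_0, j_0)$ and let $B = \stab_G(x_0)$; an element of $S_{q+2}$ fixes the flag $(i_0,j_0)$ exactly when it fixes both $i_0$ and $j_0$, so $B \cong S_q$ acting on the remaining $q$ letters. The first task is to confirm that the six displayed $B$-orbits, together with $\{x_0\}$, exhaust the $B$-orbits on $\Vert(\Gamma)$: since $S_q$ is $2$-transitive on the $q$ letters distinct from $i_0, j_0$, the set $\{(k,\ell) : \{k,\ell\} \cap \{i_0,j_0\} = \emptyset\}$ is a single orbit, and the other five are transparent. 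Counting gives $1 + 1 + 4q + q(q-1) = (q+1)(q+2) = |\Vert(\Gamma)|$, so there are exactly seven orbits and hence $|B \backslash G / B| = 7 = \dim \End_G(V)$.

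Next I would verify the defining relations of $\circle(q)$ inside $\A(\Gamma)$. The two quadratic relations are immediate from \cref{ex-complete-graph}, as $\Gamma$ is a chamber system regular with orders $q_1 = 1$, $q_2 = q$. For the braid-type relations I would imitate the affine computation, evaluating both sides on a vertex by tracking galleries via \cref{lem-action-monomial}. Concretely, one finds $x_0 \cdot T_2T_1T_2 = \sum_{k \ne i_0,j_0} (k, j_0) + \sum_{y : \delta(x_0,y) = \ttot} y$ while $x_0 \cdot T_1T_2T_1 = \sum_{k \ne i_0, j_0} (k,j_0)$, so that $x_0 \cdot \ttot$ is supported precisely on the last orbit. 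Applying one further $T_1$, which merely transposes coordinates, then yields $x_0 \cdot (T_2T_1)^2 = \sum_{k \ne i_0,j_0} (j_0,k) + \sum_{y : \delta(x_0,y)=\ttot} y = x_0 \cdot T_1T_2 + x_0 \cdot \ttot$. Since every vertex is $G$-equivalent to $x_0$ and the operators commute with $G$, this identity of vectors upgrades to the operator identity $(T_2T_1)^2 = T_1T_2 + T_2T_1T_2 - T_1T_2T_1$, and its transpose is the remaining relation. Pleasantly, every coefficient is $1$ with no dependence on $q$; this is the promised simplification over the affine case, and it is also where I expect the only real bookkeeping to live.

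From the relations one gets $\dim \circle(q) \le 7$: the quadratics reduce any word to an alternating monomial, and the two braid relations reduce any alternating monomial of length $\ge 4$ to shorter ones, leaving $I, T_1, T_2, T_1T_2, T_2T_1, T_1T_2T_1, T_2T_1T_2$ as a spanning set. Hence the surjection $\circle(q) \twoheadrightarrow \A(\Gamma)$ forces $\dim \A(\Gamma) \le 7$. For the reverse inequality, the computations above establish axiom (Ar2) for each $T \in \W$ (the case of $\ttot$ being exactly the subtraction just performed), so the seven vectors $x_0 \cdot T$ are supported on the seven disjoint $B$-orbits and are therefore linearly independent; thus $\dim \A(\Gamma) \ge 7$. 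Consequently $\dim \A(\Gamma) = 7$, the map $\circle(q) \to \A(\Gamma)$ is an isomorphism, and (Ar1) holds because $\W$ is now a basis. So $\delta$ is an architecture with Coxeter basis $\W$, and \cref{coro-archi-gives-config} repackages $\W$ as a coherent configuration.

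Finally, strong transitivity is immediate: $G = S_{q+2}$ is vertex-transitive (being $2$-transitive on the $q+2$ letters, it is transitive on ordered pairs of distinct letters), and the inclusion $\A(\Gamma) \subseteq \End_G(V)$ combined with $\dim \A(\Gamma) = 7 = \dim \End_G(V)$ forces equality. One could instead prove strong transitivity directly from the orbit count and then invoke \cref{thm-strong-trans-implies-arch} to obtain the architecture for free; but since the relations must be computed anyway to identify $\A(\Gamma)$ with $\circle(q)$, the route above is no longer and keeps everything self-contained.
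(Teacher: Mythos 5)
Your proposal is correct and follows essentially the same route as the paper: the paper's own proof reduces everything to the single braid-type relation (it verifies $(T_1T_2)^2 = T_2T_1 + T_2T_1T_2 - T_1T_2T_1$, the transpose of the one you check) by the same gallery count establishing (Ar2) for $\ttot$, and then invokes the affine-plane template for the spanning/independence and strong-transitivity arguments that you spell out explicitly. Your orbit count and the observation that applying $T_1$ just transposes coordinates are accurate, and the upgrade from the identity at $x_0$ to an operator identity via $G$-equivariance is exactly the mechanism the paper relies on.
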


\begin{proof}
If we argue as we have done for affine planes, we see that everything follows if we can only prove that
\[ (T_1T_2)^2 = T_2T_1 + T_2T_1T_2 - T_1 T_2 T_1 \, ,   \]
or equivalently that 
\[ x \cdot T_{121} T_2 = x \cdot T_{21}+ x \cdot T_{212^*} \tag{*} \]
for an arbitrary vertex~$x$. On the left hand side, this expands to 
\[ \sum_{y : \delta (x, y) = T_{121}} \sum_{z : \delta (y, z)=T_2} z \, ,   \]
from (Ar2), which is easily established. Here, if~$x= (i_0, j_0)$, then~$z= (k, \ell)$ with~$\ell \ne k$ and~$\ell \ne j_0$. We have two possibilities. We may have~$\ell \ne i_0$, in which case~$\delta (x, z) = T_{212^*}$ ; on the other hand, for any~$z$ with~$\delta (x, y) = T_{212^*}$, we find a unique~$y$ such that~$\delta (x, y) = T_{121}$ and~$\delta (y, z) = T_2$, trivially. The second possibility is that~$\ell = i_0$, so that~$\delta (x, z) = T_{21}$ : and again, starting from such a~$z$, there is a unique~$y$ with $\delta (x, y) = T_{121}$ and~$\delta (y, z) = T_2$. And so, using (Ar2) again, we do indeed have the relation (*).
\end{proof}

\bibliography{myrefs}
\bibliographystyle{custom}

\end{document}